\newcommand{\eps}{\epsilon}
\newcommand{\F}{\mathbb{F}}
\newcommand{\T}{\mathcal{T}}
\newcommand{\Q}{\mathbb{Q}}
\newcommand{\Z}{\mathbb{Z}}
\newcommand{\Rs}{R^*}
\newcommand{\pl}{p\operatorname{-}\lim}
\newcommand{\ql}{q\operatorname{-}\lim}
\newcommand{\D}{{\mathcal G}}
\newcommand{\MI}{\mathcal{AMI}}
\newcommand{\N}{\mathbb{N}}
\newcommand{\AR}{{\mathcal A}_R}
\newcommand{\AZ}{{\mathcal A}_\mathbb{Z}}
\newcommand{\AN}{{\mathcal A}_\mathbb{N}}
\newcommand{\AK}{{\mathcal A}_K}
\newcommand{\FIID}{LID}
\newtheorem{theorem}{Theorem}[section]
\newtheorem{lemma}[theorem]{Lemma}
\newtheorem{corollary}[theorem]{Corollary}
\newtheorem{proposition}[theorem]{Proposition}
\theoremstyle{definition}
\newtheorem{definition}[theorem]{Definition}
\newtheorem{example}[theorem]{Example}
\theoremstyle{remark}
\newtheorem{conjecture}[theorem]{Conjecture}
\newtheorem{remark}[theorem]{Remark}
\numberwithin{equation}{section}
\begin{document}
\author{Vitaly Bergelson}
\author{Joel Moreira}

\title[Measure preserving actions of affine semigroups]{Measure preserving actions of affine semigroups and $\{x+y,xy\}$ patterns}
\begin{abstract}
Ergodic and combinatorial results obtained in \cite{Bergelson_Moreira15} involved measure preserving actions of the affine group $\AK$ of a countable field $K$.
In this paper we develop a new approach based on ultrafilter limits which allows one to refine and extend the results obtained in \cite{Bergelson_Moreira15} to a more general situation involving the measure preserving actions of the non-amenable affine semigroups of a large class of integral domains. (The results in \cite{Bergelson_Moreira15} heavily depend on the amenability of the affine group of a field).
Among other things, we obtain, as a corollary of an ultrafilter ergodic theorem, the following result:
Let $K$ be a number field and let ${\mathcal O}_K$ be the ring of integers of $K$. For any finite partition $K=C_1\cup\cdots\cup C_r$ there exists $i\in\{1,\dots,r\}$ and many $x\in K$ and $y\in{\mathcal O}_K$ such that $\{x+y,xy\}\subset C_i$.
\end{abstract}
\thanks{The first author gratefully acknowledges the support of the NSF under grants DMS-1162073 and DMS-1500575}

\maketitle
\section{Introduction}
One of the early results in Ramsey theory, due to I. Schur \cite{Schur16}, states that for any finite partition (or, as it is customary to say, coloring) $\N=C_1\cup\cdots\cup C_r$ of the natural numbers\footnote{In this paper we abide by the convention that $\N=\{1,2,3,\dots\}$.}, one of the cells $C_i$ contains a triple of the form $\{x,y,x+y\}$.
It is not hard to see that any finite coloring $\N=\bigcup_{i=1}^rC_i$ yields also a monochromatic triple of the form $\{x,y,xy\}$ (just observe that the restriction of a coloring of $\N$ to the set $\{2^n:n\in\N\}$ induces a new coloring of $\N$ and apply Schur's theorem).

A famous open conjecture states that for any finite coloring of $\N$, one finds (many) monochromatic quadruples of the form $\{x,y,x+y,xy\}$. Even a weaker version of this conjecture, asking for non-trivial monochromatic configurations of the form $\{x+y,xy\}$ is, so far, quite recalcitrant.
The above questions become more manageable if one considers finite partitions of the set of rational numbers $\Q$. An ergodic approach developed by the authors in \cite{Bergelson_Moreira15} shows that actually any `large' set in $\Q$ (and, indeed, in any countable field $K$) contains plenty of configurations of the form $\{x+y,xy\}$.

The results obtained in \cite{Bergelson_Moreira15} naturally lead to new questions which are addressed in this paper. In order to present the questions (and the answers) we need first to introduce pertinent notation and definitions and formulate some relevant results from \cite{Bergelson_Moreira15}.

Let $K$ be an infinite countable field.
For each $u\in K$ let $A_u:K\to K$ be the addition map $A_u:x\mapsto x+u$ and, for $u\neq0$, let $M_u:K\to K$ denote the multiplication map $M_u:x\mapsto ux$.
Let $\AK=\{A_uM_v:x\mapsto vx+u\mid u,v\in K,\;v\neq0\}$ denote the affine group of $K$.
A sequence $(F_N)_{N\in\N}$ of finite subsets of $K$ is a \emph{double F\o lner sequence} if it is asymptotically invariant under any fixed affine transformation $g\in\AK$.
Given any double F\o lner sequence $(F_N)_{n\in\N}$ in $K$ one can define the \emph{affinely invariant} upper density $\bar d_{(F_N)}(\cdot)$ by the formula
$$\bar d_{(F_N)}(E):=\limsup_{N\to\infty}\frac{|E\cap F_N|}{|F_N|},\ E\subset K$$
(The affine invariance means that $\bar d_{(F_N)}(E)=\bar d_{(F_N)}\big(f(E)\big)$ for any $f\in\AK$.)
The main ergodic theoretical result in \cite{Bergelson_Moreira15} is the following analogue of von Neumann's mean ergodic theorem:
\begin{theorem}\label{thm_introergodic}
Let $K$ be an infinite countable field, let $(U_g)_{g\in\AK}$ be a unitary representation of $\AK$ on a Hilbert space $\mathcal{H}$, let $I=\big\{f\in \mathcal{H}:(\forall g\in\AK)\ U_gf=f\big\}$ be the invariant subspace and let $P:\mathcal{H}\to I$ be the orthogonal projection onto $I$. Then for any $f\in \mathcal{H}$ and any double F\o lner sequence $(F_N)_{N\in\N}$ in $K$ we have
$$\lim_{N\to\infty}\frac1{|F_N|}\sum_{u\in F_N}U_{M_uA_{-u}}f=Pf$$
\end{theorem}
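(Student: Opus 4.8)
The plan is to split $\mathcal{H}$ according to the additive part of the action and to treat the two pieces by entirely different mechanisms. Writing $A_t\colon x\mapsto x+t$ and $M_v\colon x\mapsto vx$, I record first the basic covariance relation $M_vA_t=A_{vt}M_v$, so that $U_{M_v}U_{A_t}=U_{A_{vt}}U_{M_v}$, and that the averaging element unwinds as $M_uA_{-u}\colon x\mapsto ux-u^2$. The subgroup $\{A_t:t\in K\}$ is abelian, so by the spectral theorem for the unitary action of $(K,+)$ I set $\mathcal{H}_0:=\{f\in\mathcal{H}:U_{A_t}f=f\ \forall t\in K\}$ and decompose $\mathcal{H}=\mathcal{H}_0\oplus\mathcal{H}_0^\perp$, where $\mathcal{H}_0$ is the spectral subspace of the trivial character and $\mathcal{H}_0^\perp=\overline{\operatorname{span}}\{U_{A_t}\phi-\phi:t\in K,\ \phi\in\mathcal{H}\}$ is the closed span of additive coboundaries. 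Since $I\subset\mathcal{H}_0$, it suffices, by the uniform bound $\|T_N\|\le 1$ (where $T_N f:=\frac1{|F_N|}\sum_{u\in F_N}U_{M_uA_{-u}}f$), to prove convergence on each summand.

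On $\mathcal{H}_0$ the argument is soft. If $U_{A_{-u}}f=f$ then $U_{M_uA_{-u}}f=U_{M_u}f$, so $T_Nf=\frac1{|F_N|}\sum_{u\in F_N}U_{M_u}f$ is an honest F\o lner average for the abelian multiplicative group $K^\times$: indeed the double F\o lner hypothesis makes $(F_N\setminus\{0\})$ a F\o lner sequence for $K^\times$. By the mean ergodic theorem for the amenable group $K^\times$ this converges to the orthogonal projection of $f$ onto the $K^\times$-invariant vectors; because $\mathcal{H}_0$ is $U_{M_v}$-invariant and the $K^\times$-fixed vectors inside $\mathcal{H}_0$ are exactly $I$, this limit is $Pf$.

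The real work is on $\mathcal{H}_0^\perp$, where $Pf=0$ and I must show $T_Nf\to 0$. Here I would apply the van der Corput Hilbert-space inequality along the additive F\o lner sequence $(F_N)$ to $x_u:=U_{M_uA_{-u}}f$, reducing the claim to the decay of the averaged correlations $\langle x_{u+h},x_u\rangle=\langle U_{(M_uA_{-u})^{-1}M_{u+h}A_{-(u+h)}}f,f\rangle$. A direct computation gives $(M_uA_{-u})^{-1}M_{u+h}A_{-(u+h)}=A_{b}M_{a}$ with $a=\tfrac{u+h}{u}$ and $b=-\tfrac{2uh+h^2}{u}$; feeding this through the covariance relation and the additive spectral measure, the average over $u$ (controlled by the multiplicative invariance of $F_N$) should collapse the multiplicative factor, while the subsequent average over $h$ (controlled by the additive invariance of $F_N$) turns the additive factor into the additive ergodic average, whose limit is the projection onto $\mathcal{H}_0$ and therefore annihilates $f$.

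I expect the main obstacle to be precisely this last decoupling. The element $M_uA_{-u}$ locks the additive shift $-u^2$ to the multiplicative scaling $u$, so $\{M_uA_{-u}:u\in F_N\}$ is \emph{not} a F\o lner set in $\AK$ and the average does not factor as a product of an additive and a multiplicative average. Resolving this is exactly the point at which the amenability of $\AK$---through the simultaneous additive and multiplicative invariance of the double F\o lner sequence---must be used in full strength, and it is the step that the ultrafilter method of the present paper is designed to streamline.
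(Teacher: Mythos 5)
Your first half is sound: the splitting $\mathcal{H}=\mathcal{H}_0\oplus\mathcal{H}_0^\perp$ by additively invariant vectors, the observation that on $\mathcal{H}_0$ one has $U_{M_uA_{-u}}f=U_{M_u}f$, and the reduction there to the mean ergodic theorem for the amenable group $(K^*,\cdot)$ (using that a double F\o lner sequence is multiplicatively F\o lner, and that the $S_M$-fixed vectors inside $\mathcal{H}_0$ are exactly $I$) all match the actual argument --- note that Theorem \ref{thm_introergodic} is not proved in this paper but quoted from \cite{Bergelson_Moreira15}, whose proof, like the ultrafilter analogue proved here (Theorem \ref{thm_main} via Lemmas \ref{lemma_large} and \ref{cor_wm}), has precisely this architecture. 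The genuine gap is in the half you yourself flag as unresolved: on $\mathcal{H}_0^\perp$ you run van der Corput along \emph{additive} differences $u\mapsto u+h$. Your computation $(M_uA_{-u})^{-1}M_{u+h}A_{-(u+h)}=A_bM_a$ with $a=1+h/u$ and $b=-h(a+1)$ is correct, but it shows exactly why this route dies: both coordinates of the affine element depend on $u$ through the inversion $u\mapsto h/u$. A double F\o lner sequence is asymptotically invariant only under affine maps $x\mapsto vx+t$; it has no invariance under $x\mapsto 1/x$, so the set $\{1+h/u:u\in F_N\}$ carries no exploitable structure and the $u$-average of $\langle U_{A_bM_a}f,f\rangle$ cannot be collapsed. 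Your closing paragraph concedes this, so the proposal stops short of a proof precisely where the theorem's content lies.

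The missing idea is to difference \emph{multiplicatively}: for $x_u=U_{M_uA_{-u}}f$ and $b\in K\setminus\{-1,0,1\}$ one computes $(M_uA_{-u})^{-1}M_{ub}A_{-ub}=A_{-u(b^2-1)}M_b$, so that $\langle x_{ub},x_u\rangle=\langle U_{A_{-u(b^2-1)}}U_{M_b}f,f\rangle$. Now the multiplicative part $M_b$ is \emph{frozen} (independent of $u$) and the additive shift is \emph{linear} in $u$; after the change of variables $w=-u(b^2-1)$, legitimate by the multiplicative invariance of $(F_N)$, the $u$-average becomes the additive ergodic average of the fixed vector $U_{M_b}f$, which by von Neumann's theorem converges to $\langle P_{\mathcal{H}_0}U_{M_b}f,f\rangle=\langle U_{M_b}f,P_{\mathcal{H}_0}f\rangle=0$ for $f\in\mathcal{H}_0^\perp$. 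A van der Corput lemma for the multiplicative group (single differences $b$ in a co-finite set suffice, since the exceptional pairs $b=\pm1$ are negligible in the double average) then gives $T_Nf\to0$ on $\mathcal{H}_0^\perp$. This is exactly the mechanism of Proposition \ref{lema_pvdc} and Lemma \ref{cor_wm} in the present paper, where one sets $a_u=M_{1/u}A_u\phi^\perp$ and finds $\langle a_{ub},a_u\rangle=\langle A_{u(b-1/b)}\phi^\perp,M_b\phi^\perp\rangle$: again a linear-in-$u$ additive shift paired with a frozen multiplicative factor. So the obstruction you correctly identified --- that $M_uA_{-u}$ locks the additive and multiplicative coordinates together --- is not overcome by ``full strength amenability'' but by choosing the differencing direction (multiplicative rather than additive) so that the locked coordinates unlock.
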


From Theorem \ref{thm_introergodic} we derived the following result:
\begin{theorem}\label{thm_introlargereturn}
Let $K$ be an infinite countable field, let $\big(X,{\mathcal B},\mu,(T_g)_{g\in\AK}\big)$ be a probability measure preserving system and let $B\in{\mathcal B}$.
Then, for any double F\o lner sequence $(F_N)_{N\in\N}$ in $K$ we have
\begin{equation}\label{eq_doublekhintchine}
  \lim_{N\to\infty}\frac1{|F_N|}\sum_{u\in F_N}\mu\big(T_{A_u}^{-1}B\cap T_{M_u}^{-1}B\big)\geq\mu(B)^2.
\end{equation}
\end{theorem}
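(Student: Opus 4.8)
The plan is to deduce the estimate directly from the mean ergodic theorem (Theorem \ref{thm_introergodic}) by recognizing the correlation $\mu(T_{A_u}^{-1}B\cap T_{M_u}^{-1}B)$ as an inner product of exactly the form that is averaged there. First I would pass to the Koopman representation: on $\mathcal{H}=L^2(X,\mu)$ set $U_gf=f\circ T_{g^{-1}}$, which is a unitary representation of $\AK$ because the $T_g$ preserve $\mu$. Applying this to the real function $f=1_B$ and using invariance of $\mu$, the correlation becomes an integral of a product of Koopman translates, hence an inner product:
$$\mu\big(T_{A_u}^{-1}B\cap T_{M_u}^{-1}B\big)=\int (1_B\circ T_{A_u})(1_B\circ T_{M_u})\,d\mu=\langle U_{A_{-u}}1_B,\,U_{M_{u^{-1}}}1_B\rangle.$$

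The key algebraic step is then to move one factor across the inner product using unitarity: since $U_{M_{u^{-1}}}^*=U_{M_{u^{-1}}}^{-1}=U_{M_u}$, the right-hand side equals $\langle U_{M_u}U_{A_{-u}}1_B,1_B\rangle=\langle U_{M_uA_{-u}}1_B,1_B\rangle$. This is precisely the operator appearing under the Ces\`aro average in Theorem \ref{thm_introergodic}. Averaging over $u\in F_N$ and pulling the average inside the (continuous, bilinear) inner product gives
$$\frac1{|F_N|}\sum_{u\in F_N}\mu\big(T_{A_u}^{-1}B\cap T_{M_u}^{-1}B\big)=\left\langle \frac1{|F_N|}\sum_{u\in F_N}U_{M_uA_{-u}}1_B,\ 1_B\right\rangle,$$
and Theorem \ref{thm_introergodic} shows the vector inside converges in norm to $P1_B$; since the inner product is continuous, the whole expression converges to $\langle P1_B,1_B\rangle=\|P1_B\|^2$, the last equality holding because $P$ is a self-adjoint projection.

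It remains to bound $\|P1_B\|^2$ from below by $\mu(B)^2$. The point is that the constant function $1$ lies in the invariant subspace $I$ (each $U_g$ fixes constants) and $\|1\|=1$ since $\mu$ is a probability measure. Hence $\langle P1_B,1\rangle=\langle 1_B,P1\rangle=\langle 1_B,1\rangle=\mu(B)$, and Cauchy--Schwarz yields $\mu(B)=|\langle P1_B,1\rangle|\le\|P1_B\|\,\|1\|=\|P1_B\|$, which squares to the claimed inequality. I expect the only real obstacle to be bookkeeping: verifying that the chosen Koopman convention gives a genuine homomorphism and that the combination $M_uA_{-u}$ (rather than $A_{-u}M_u$ or an inverse) is the one that emerges, so that the averaged operator matches Theorem \ref{thm_introergodic} exactly. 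Once the inner-product identity is correctly aligned, the remaining steps are immediate.
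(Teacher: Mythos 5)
Your proposal is correct and follows essentially the route the paper itself indicates: Theorem \ref{thm_introlargereturn} is deduced from the mean ergodic theorem (Theorem \ref{thm_introergodic}) by passing to the Koopman representation, rewriting the correlation as $\langle U_{M_uA_{-u}}1_B,1_B\rangle$, passing the Ces\`aro average inside the inner product to get $\langle P1_B,1_B\rangle=\|P1_B\|^2$, and bounding this below by $\mu(B)^2$ via the invariant constant function and Cauchy--Schwarz. The only quibble, shared with the statement itself, is the harmless convention needed for a possible term $u=0$ in $F_N$ (where $M_u$ is undefined), which contributes at most $1/|F_N|\to0$ and does not affect the limit.
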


\begin{corollary}\label{cor_introlargereturns}
  Let $K$ be an infinite countable field, let $\big(X,{\mathcal B},\mu,(T_g)_{g\in\AK}\big)$ be a probability measure preserving system and let $B\in{\mathcal B}$.
Then, for any $\delta\in(0,1)$, 
the set
\begin{equation}\label{eq_rbeps}R(B,\delta):=\Big\{u\in K:\mu\big(T_{M_u}^{-1}B\cap T_{A_u}^{-1}B\big)>\delta\mu(B)^2\Big\}\end{equation}
has positive upper density with respect to any double F\o lner sequence.
\end{corollary}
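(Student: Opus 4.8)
The plan is to deduce this purely formally from Theorem \ref{thm_introlargereturn} by a standard averaging (``first moment'') argument: an averaged lower bound on a bounded nonnegative quantity forces the set where that quantity is not too small to have positive density. I would assume throughout that $\mu(B)>0$, since otherwise the displayed set $R(B,\delta)$ is empty and the statement should be read with this nondegeneracy convention in force.

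First I would introduce the shorthand $a_u:=\mu\big(T_{M_u}^{-1}B\cap T_{A_u}^{-1}B\big)$ for $u\in K$. Since each $T_g$ is measure preserving, both $T_{M_u}^{-1}B$ and $T_{A_u}^{-1}B$ have measure $\mu(B)$, so that $0\le a_u\le\mu(B)\le 1$ for every $u$. With this notation, Theorem \ref{thm_introlargereturn} says exactly that
$$\lim_{N\to\infty}\frac1{|F_N|}\sum_{u\in F_N}a_u\ge\mu(B)^2$$
for every double F\o lner sequence $(F_N)$, while by definition $R(B,\delta)=\{u\in K: a_u>\delta\mu(B)^2\}$.

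Next I would split the Ces\`aro average along $R:=R(B,\delta)$ and its complement. On $F_N\setminus R$ one has $a_u\le\delta\mu(B)^2$, while on $F_N\cap R$ one uses only the trivial bound $a_u\le\mu(B)$. This yields, for each $N$,
$$\frac1{|F_N|}\sum_{u\in F_N}a_u\le\frac{|F_N\cap R|}{|F_N|}\,\mu(B)+\frac{|F_N\setminus R|}{|F_N|}\,\delta\mu(B)^2\le\frac{|F_N\cap R|}{|F_N|}\,\mu(B)+\delta\mu(B)^2.$$
Taking $\limsup_{N\to\infty}$ of both sides, using subadditivity of $\limsup$ on the right together with the lower bound from Theorem \ref{thm_introlargereturn} on the left, gives
$$\mu(B)^2\le\bar d_{(F_N)}(R)\,\mu(B)+\delta\mu(B)^2.$$
Dividing by $\mu(B)>0$ and rearranging then produces $\bar d_{(F_N)}(R)\ge(1-\delta)\mu(B)>0$, which is the desired conclusion (and is in fact a quantitative refinement of it, holding uniformly over all double F\o lner sequences).

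As for difficulties, there is essentially no genuine obstacle here: all of the analytic content is carried by Theorem \ref{thm_introlargereturn}, and the corollary is a soft consequence. The only points demanding a little care are the uniform bound $a_u\le\mu(B)$, which rests on measure preservation, and the legitimacy of passing to $\limsup$ term by term, where I must invoke only the subadditivity $\limsup(x_N+y_N)\le\limsup x_N+\limsup y_N$ rather than additivity, since the two partial averages need not converge separately.
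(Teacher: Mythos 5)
Your proof is correct, and it is exactly the intended derivation: the paper presents Corollary \ref{cor_introlargereturns} as an immediate consequence of the Khintchine-type bound in Theorem \ref{thm_introlargereturn}, obtained by the standard first-moment argument of splitting the Ces\`aro average over $R(B,\delta)$ and its complement. Your handling of the degenerate case $\mu(B)=0$ and of the $\limsup$ subadditivity is appropriate, and the quantitative bound $\bar d_{(F_N)}\big(R(B,\delta)\big)\geq(1-\delta)\mu(B)$ you extract is a valid sharpening of the stated conclusion.
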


Using a version of Furstenberg's correspondence principle (see Theorem 2.8 in \cite{Bergelson_Moreira15}) we deduced from Theorem \ref{thm_introlargereturn} the following combinatorial corollary
\begin{corollary}\label{cor_intro4}
  Let $K$ be an infinite countable field, let $(F_N)_{N\in\N}$ be a double F\o lner sequence in $K$ and let $E\subset K$ be such that $\bar d_{(F_N)}(E)>0$. Then $E$ contains many pairs of the form $\{x+y,xy\}$.
\end{corollary}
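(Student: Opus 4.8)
The plan is to combine the ergodic estimate of Theorem \ref{thm_introlargereturn} (equivalently, its Corollary \ref{cor_introlargereturns}) with a Furstenberg-type correspondence principle, so that the combinatorial assertion reduces to a purely dynamical one. The first step is to invoke the correspondence principle recorded as Theorem 2.8 in \cite{Bergelson_Moreira15}: starting from a set $E\subset K$ with $\bar d_{(F_N)}(E)>0$, it produces a probability measure preserving system $\big(X,{\mathcal B},\mu,(T_g)_{g\in\AK}\big)$ together with a set $B\in{\mathcal B}$ of positive measure (with $\mu(B)\geq\bar d_{(F_N)}(E)$) such that, for all $g_1,g_2\in\AK$,
$$\bar d_{(F_N)}\big(g_1^{-1}E\cap g_2^{-1}E\big)\geq\mu\big(T_{g_1}^{-1}B\cap T_{g_2}^{-1}B\big).$$
The point of this inequality is that positivity of the dynamical intersection forces positivity of the combinatorial density, and in particular non-emptiness of the corresponding set in $K$.

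The second and decisive step is to match the group elements so that the arithmetic pattern emerges. I would specialize to $g_1=A_u$ and $g_2=M_u$, noting that $A_u^{-1}E=\{z\in K:z+u\in E\}$ and $M_u^{-1}E=\{z\in K:uz\in E\}$. Hence any $z$ in the intersection $A_u^{-1}E\cap M_u^{-1}E$ satisfies $z+u\in E$ and $uz\in E$; setting $x=z$ and $y=u$ we obtain a configuration $\{x+y,xy\}=\{z+u,uz\}\subset E$. Consequently it suffices to exhibit many $u$ for which $A_u^{-1}E\cap M_u^{-1}E\neq\emptyset$, and by the displayed inequality this holds as soon as $\mu\big(T_{A_u}^{-1}B\cap T_{M_u}^{-1}B\big)>0$.

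To produce such $u$, I would apply Corollary \ref{cor_introlargereturns} with a fixed $\delta\in(0,1)$: the set
$$R(B,\delta)=\Big\{u\in K:\mu\big(T_{M_u}^{-1}B\cap T_{A_u}^{-1}B\big)>\delta\mu(B)^2\Big\}$$
has positive upper density with respect to $(F_N)$. Since $\mu(B)>0$, and since each $u\in R(B,\delta)$ is automatically nonzero (so that $M_u$ is defined), every such $u$ satisfies the positivity hypothesis above. Therefore, via the correspondence inequality, $\bar d_{(F_N)}\big(A_u^{-1}E\cap M_u^{-1}E\big)>0$ for every $u\in R(B,\delta)$, and as $u$ ranges over the positive-density set $R(B,\delta)$ we harvest ``many'' pairs $\{x+y,xy\}\subset E$.

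The genuinely substantial input here is Theorem \ref{thm_introlargereturn}, which I am assuming; the remaining work is the translation between dynamical intersections and the pattern $\{x+y,xy\}$. The step I expect to require the most care is the precise meaning of ``many'' together with the exclusion of degenerate configurations (for example ensuring $x+y\neq xy$, or that $x,y$ avoid trivial values). Rather than extracting a single pair per good $u$, I would exploit the stronger conclusion $\bar d_{(F_N)}\big(A_u^{-1}E\cap M_u^{-1}E\big)>0$: this gives, for each $u\in R(B,\delta)$, an entire positive-density set of admissible $z$, from which any negligible (zero-density) family of degenerate choices can be discarded without affecting the conclusion. Combining the abundance of admissible $u$ with the abundance of admissible $z$ for each such $u$ yields the desired quantitative richness of monochromatic $\{x+y,xy\}$ configurations.
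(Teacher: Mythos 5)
Your proposal is correct and follows essentially the same route as the paper: the correspondence principle of Theorem 2.8 in \cite{Bergelson_Moreira15} reduces the statement to the dynamical estimate of Theorem \ref{thm_introlargereturn} (via Corollary \ref{cor_introlargereturns}), and the specialization $g_1=A_u$, $g_2=M_u$ turns non-empty intersections $A_u^{-1}E\cap M_u^{-1}E$ into patterns $\{z+u,uz\}\subset E$, exactly as the paper does (compare the proof of Theorem \ref{cor_integer} and the discussion following it). Your additional observation that each good $u$ yields a positive-density set of admissible $z$, so degenerate configurations can be discarded, matches the intended meaning of ``many.''
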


Theorems \ref{thm_introergodic} and \ref{thm_introlargereturn} depend heavily on the amenability of the affine group $\AK$ and form a sort of the ultimate result that can be achieved via Ces\`aro averages.
Since the affine semigroups of rings (such as $\Z$ or, say, the polynomial ring $\F[t]$ where $\F$ is a finite field) are not amenable\footnote{See Proposition \ref{proposition_amenableringfield} below.}, it is a priori not clear what kind of statements similar to Theorems \ref{thm_introergodic} and \ref{thm_introlargereturn} and Corollaries \ref{cor_introlargereturns} and \ref{cor_intro4} can be formulated (and proved) if one replaces fields by more general rings.
In particular, one would like to know if the corresponding set $R(B,\delta)$ is "large" for any measure preserving action of the affine semigroup $\AZ$ of $\Z$.
As we will see below, an alternative approach, based on convergence along ultrafilters, not only allows one to have reasonable analogues of Theorems \ref{thm_introergodic} and \ref{thm_introlargereturn} for actions of $\AZ$, but also leads to a strong generalization of Corollary \ref{cor_introlargereturns} for actions of $\AK$ which guarantees the filter property of sets $R(B,\delta)$ (see Theorem \ref{thm_Rintersection} below for a precise formulation).

Observe that \eqref{eq_doublekhintchine} resembles a classical result of Khintchine (see, for example, \cite[Theorem 5.2]{Bergelson00}) stating that for any probability measure preserving system $(X,{\mathcal B},\mu,T)$ and any $B\in{\mathcal B}$
\begin{equation}\label{eq_introkhintchine}
  \lim_{N-M\to\infty}\frac1{N-M}\sum_{n=M}^N\mu(B\cap T^{-n}B)\geq\mu(B)^2
\end{equation}
Formula \eqref{eq_introkhintchine} in turn implies the so-called Khintchine's recurrence theorem, stating that the set
\begin{equation}\label{eq_sbdelta}
  S(B,\delta)=\{n:\mu(B\cap T^{-n}B)\geq\delta\mu(B)^2\}
\end{equation}
is syndetic for any $\delta\in(0,1)$. (A set $E\subset\Z$ is \emph{syndetic} if it has bounded gaps, in other words, if finitely many translates of $E$ cover $\Z$. More generally, a subset $E$ of a group is (left) \emph{syndetic} if finitely many translates of the form $gE$ cover $G$.)
Motivated by Khintchine's recurrence theorem, one would like to get a similar finite tiling property for sets of the form $R(B,\delta)$.

Corollary \ref{cor_introlargereturns} states that $R(B,\delta)$ has positive upper density with respect to any \emph{double} F\o lner sequence.
One can show (see Example \ref{example_thick} below) that sets which have positive density along any double F\o lner sequence are, in general, neither additively syndetic nor multiplicatively syndetic. Nevertheless, they still posses a strong enough tiling property which is revealed via the (a posteriori quite natural) notion of affine syndeticity:
\begin{definition}[Affine syndeticity]
  Given an infinite field $K$, a set $S\subset K$ is called \emph{affinely syndetic} if there exists a finite number of affine transformations $g_1,\dots,g_k\in\AK$ such that for any $x\in K$ at least one of the images $g_1(x),\dots,g_k(x)$ lies in $S$.
\end{definition}
The notion of affine syndeticity is explored in detail in Section \ref{sec_synd&thick}.
In particular we have the following proposition (cf. Theorem \ref{thm_syndensity} below).
\begin{proposition}
  Let $K$ be an infinite countable field.
  A subset $S\subset K$ is affinely syndetic if and only if it has positive upper density with respect to any double F\o lner sequence.
In particular, the sets $R(B,\delta)$ defined in \eqref{eq_rbeps} are affinely syndetic.
\end{proposition}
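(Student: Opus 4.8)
The plan is to establish the stated equivalence as two separate implications and then to read off the assertion about $R(B,\delta)$ from Corollary \ref{cor_introlargereturns}. Throughout I identify an affine transformation $g\in\AK$ with the bijection $g\colon K\to K$ it induces, so that ``$S$ is affinely syndetic'' means precisely that $K=\bigcup_{i=1}^k g_i^{-1}(S)$ for some $g_1,\dots,g_k\in\AK$.

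For the implication that affine syndeticity forces positive density along \emph{every} double F\o lner sequence, I would argue by a direct counting estimate. Fix $g_1,\dots,g_k$ with $K=\bigcup_{i=1}^k g_i^{-1}(S)$ and let $(F_N)$ be any double F\o lner sequence. Since each $g_i$ is a bijection, $|F_N\cap g_i^{-1}(S)|=|g_i(F_N)\cap S|$, and every point of $g_i(F_N)\cap S$ lies either in $F_N\cap S$ or in $g_i(F_N)\setminus F_N$; hence
\[
 |F_N|\le\sum_{i=1}^k|g_i(F_N)\cap S|\le k\,|F_N\cap S|+\sum_{i=1}^k|g_i(F_N)\setminus F_N|.
\]
Dividing by $|F_N|$ and letting $N\to\infty$, the double F\o lner property annihilates the last sum and leaves $1\le k\,\bar d_{(F_N)}(S)$, so $\bar d_{(F_N)}(S)\ge 1/k>0$.

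The reverse implication I would prove in contrapositive form: if $S$ is not affinely syndetic I will produce a single double F\o lner sequence along which $S$ has density $0$. The hypothesis means exactly that the complement $T:=K\setminus S$ is \emph{affinely thick}, i.e.\ for every finite family $h_1,\dots,h_m\in\AK$ there is a point $x\in K$ with $h_j(x)\in T$ for all $j$. The goal is then to build a double F\o lner sequence $(F_N)$ with $F_N\subseteq T$, which immediately yields $\bar d_{(F_N)}(S)=0$. Here amenability enters: being the semidirect product of the abelian groups $(K,+)$ and $K^{*}$, the affine group $\AK$ is solvable and hence amenable, so it possesses a left F\o lner sequence $(\Phi_N)$. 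Enumerating $\AK=\{g_1,g_2,\dots\}$ I would choose $(\Phi_N)$ so that $\max_{j\le N}|g_j\Phi_N\triangle\Phi_N|/|\Phi_N|\to 0$, and then transport each $\Phi_N$ into $K$ through an orbit map $\pi_{x_N}\colon\phi\mapsto\phi(x_N)$: using affine thickness for the finite family $\Phi_N\cup\bigcup_{j\le N}g_j\Phi_N$ one selects $x_N$ whose image set $F_N:=\{\phi(x_N):\phi\in\Phi_N\}$ lies in $T$. Since $g_j(F_N)=(g_j\Phi_N)(x_N)$, the group-level approximate invariance of $(\Phi_N)$ should transfer to space-level approximate invariance of $(F_N)$, exhibiting a double F\o lner sequence contained in $T$.

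The main obstacle is precisely this transfer. The orbit map $\pi_x$ has infinite fibers, since the stabilizer of a point of $K$ under $\AK$ is a copy of $K^{*}$; a priori it may collapse $\Phi_N$ and sever the link between the group symmetric differences $|g_j\Phi_N\triangle\Phi_N|$ and the field symmetric differences $|g_j(F_N)\triangle F_N|$. To control this one must either force $\pi_{x_N}$ to be injective on the relevant finite set — the collisions $\phi(x)=\phi'(x)$ occur only on a finite exceptional set of $x$, so what is needed is a thickness witness lying off that set — or, more robustly, bound the number of collisions by $o(|\Phi_N|)$. Reconciling this cardinality control with the constraint $F_N\subseteq T$ is the delicate point, and it is where the precise form of affine thickness is decisive; I would carry out the details in the framework of Section \ref{sec_synd&thick} (cf.\ Theorem \ref{thm_syndensity}). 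Once the equivalence is in place, the final assertion is immediate: by Corollary \ref{cor_introlargereturns} each set $R(B,\delta)$ has positive upper density with respect to every double F\o lner sequence, and is therefore affinely syndetic.
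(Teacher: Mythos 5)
Your overall strategy coincides with the paper's proof of Theorem \ref{thm_syndensity}: the forward implication via a counting/subadditivity estimate (the paper packages this as Lemma \ref{lemma_density}), and the reverse implication by passing to the affinely thick complement $T=K\setminus S$, taking a left F\o lner sequence $(G_N)$ in the amenable group $\AK$, and pushing it into $K$ through orbit maps at thickness witnesses. Your forward implication is correct as written (with the routine remark that asymptotic invariance of $(F_N)$ under a general affine map $g=A_vM_u$ must be assembled from the separate additive and multiplicative invariances in the definition of a double F\o lner sequence).

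The genuine gap is exactly at the point you flag and then defer: you need, for each $N$, a thickness witness $x_N$ for the finite family $G_N$ that avoids the finite exceptional set where two distinct elements of $G_N$ collide, and you never prove that such a witness exists --- appealing to ``the framework of Section \ref{sec_synd&thick} (cf.\ Theorem \ref{thm_syndensity})'' is circular, since Theorem \ref{thm_syndensity} is precisely the statement being proved. The missing observation, which is how the paper closes this step, is that the set of witnesses
$$W:=\{x\in K:\theta_{G_N}x\subset T\}$$
is itself affinely thick, hence infinite, hence not contained in the finite collision set. Indeed, for any finite $H\subset\AK$, thickness of $T$ applied to the finite family $\{g\circ h:g\in G_N,\ h\in H\}$ yields a point $y$ with $\theta_Hy\subset W$; taking $H=\{A_c:c\in C\}$ for arbitrarily large finite sets $C$ shows $W$ is infinite. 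Choosing $x_N\in W$ off the collision set gives $|F_N|=|G_N|$ and $F_N\subset T$, and then, as in the paper, $F_N\cap\theta_gF_N\supset\theta_{G_N\cap gG_N}x_N$ yields $|F_N\cap\theta_gF_N|/|F_N|\geq|G_N\cap gG_N|/|G_N|\to1$, so $(F_N)$ is a double F\o lner sequence with $\underline d_{(F_N)}(T)=1$, whence $\bar d_{(F_N)}(S)=0$ by Lemma \ref{lemma_density}. Once this is supplied, your alternative route (bounding collisions by $o(|\Phi_N|)$) is unnecessary, and the final assertion about $R(B,\delta)$ follows from Corollary \ref{cor_introlargereturns} exactly as you say.
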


Observe that, in general, affinely syndetic sets do not have the finite intersection property. 
For example, the subsets of rational numbers defined by
$$E_1=\bigcup_{n\in\Z}[2n,2n+1)\subset\Q\qquad\qquad E_2=\bigcup_{n\in\Z}[2n-1,2n)\subset\Q$$
are both additively (hence affinely) syndetic, but have empty intersection.

On the other hand, one can show that the sets $S(B,\delta)$ appearing in \eqref{eq_sbdelta} do have the finite intersection property, 
although the easiest way of proving this involves either the so-called IP-limits or limits along idempotent ultrafilters (we note that these `non-Ces\`arian' limits work well also when one deals with large returns along polynomials, see \cite{Bergelson_Furstenberg_McCutcheon96}, \cite[Section 3]{Bergelson96} and \cite{Bergelson_Robertson14}).

The above discussion suggests that the sets $R(B,\delta)$ may have the finite intersection property as well.
The following theorem provides a confirmation of this feeling.
The class of \FIID{} rings (where \FIID{} stands for Large Ideal Domain) which appears in its formulation is defined in the beginning of the next section, and includes $\Z$ and the polynomial ring $\F[x]$ over any finite field $\F$ as rather special cases.
\begin{theorem}\label{thm_Rintersection}
Let $R$ be a \FIID, let $t\in\N$ and, for each $i=1,...,t$, let $(\Omega_i,\mu_i)$ be a probability space, let $(T_g^{(i)})_{g\in\AR}$ be a measure preserving action of the affine semigroup $\AR$ of $R$ on $(\Omega_i,\mu_i)$ and let $B_i\subset\Omega_i$ be a measurable set with positive measure.
Let $\delta\in(0,1)$ and let $R(B_i,\delta)$ be defined as in equation (\ref{eq_rbeps}) with respect to the action $(T_g^{(i)})_{g\in\AR}$.
Then the intersection
\begin{equation}\label{eq_thm_R_intersection}R(B_1,\delta)\cap...\cap R(B_t,\delta)\end{equation}
is affinely syndetic (and, in particular, nonempty).
\end{theorem}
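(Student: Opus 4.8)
The plan is to prove the stronger \emph{filter} property --- that the family of sets $\{R(B,\delta)\}$ is closed under finite intersections while remaining affinely syndetic --- by replacing Ces\`aro averaging with a limit along a carefully chosen ultrafilter on $R$. The obstacle that forces us away from the methods of \cite{Bergelson_Moreira15} is that the affine semigroup $\AR$ is \emph{not} amenable (Proposition \ref{proposition_amenableringfield}), so the mean ergodic machinery behind Theorem \ref{thm_introlargereturn} is unavailable. The guiding analogy is Khintchine's recurrence theorem: there, the finite intersection property of the sets $S(B,\delta)$ is proved not via Ces\`aro averages but via limits along idempotent ultrafilters (or IP-limits), which commute well with finite intersections. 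I would therefore look for an idempotent-type ultrafilter $p$ on the multiplicative-additive structure of $R$ with respect to which one can form a well-defined unitary operator $U_p := \pl_{u} U_{M_uA_{-u}}$ acting on each $L^2(\Omega_i,\mu_i)$, where the $p$-limit is taken in the weak operator topology. The defining property of the \FIID{} class (Large Ideal Domain) should be exactly what guarantees that such an ultrafilter exists and that the limit operator has the positivity needed to recover the $\mu(B)^2$ lower bound.

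The key steps, in order, are as follows. First, I would establish the ultrafilter ergodic theorem announced in the abstract: for a suitable ultrafilter $p$ on $R$, the limit $\pl_u \langle U_{M_uA_{-u}} f, f\rangle$ exists for every $f\in\mathcal{H}$ and dominates $\|Pf\|^2 = \langle Pf,Pf\rangle$, the square-norm of the projection onto the $\AR$-invariant subspace. Applying this with $f=\mathbf{1}_{B_i}$ in each system gives, for every $i$,
\begin{equation}\label{eq_plan_plim}
\pl_u \, \mu_i\big(T^{(i)}_{M_u}{}^{-1}B_i\cap T^{(i)}_{A_u}{}^{-1}B_i\big)\geq\mu_i(B_i)^2>\delta\mu_i(B_i)^2.
\end{equation}
The crucial feature of replacing $\lim$ by $\pl$ is that the $p$-limit of each of the $t$ sequences exceeds $\delta\mu_i(B_i)^2$, so by the definition of convergence along an ultrafilter each set
$$R(B_i,\delta)=\Big\{u:\mu_i\big(T^{(i)}_{M_u}{}^{-1}B_i\cap T^{(i)}_{A_u}{}^{-1}B_i\big)>\delta\mu_i(B_i)^2\Big\}$$
lies in $p$. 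Since $p$ is a filter, the finite intersection $R(B_1,\delta)\cap\cdots\cap R(B_t,\delta)$ also lies in $p$, hence is in particular nonempty and, more importantly, is a \emph{large} set.

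Second, I would convert membership in $p$ into affine syndeticity. Here I expect to use the characterization from the preceding proposition (and the more detailed results promised in Section \ref{sec_synd&thick}, in particular Theorem \ref{thm_syndensity}): if the ultrafilter $p$ is chosen so that every member of $p$ is affinely syndetic --- for instance, by taking $p$ to be an idempotent in an appropriate affine analogue of $(\beta R,\cdot)$, whose members are automatically syndetic --- then membership of the intersection in $p$ immediately yields its affine syndeticity. This is the mechanism by which the filter property upgrades to the desired tiling conclusion, exactly paralleling how idempotent ultrafilters deliver syndeticity in the classical Khintchine setting.

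The main obstacle, and the step where the \FIID{} hypothesis must do real work, is the construction of the ultrafilter $p$ together with the proof of the ergodic theorem \eqref{eq_plan_plim}. Without amenability one cannot average over $\AR$, so the positivity bound $\pl_u\langle U_{M_uA_{-u}}f,f\rangle\ge\|Pf\|^2$ must be extracted from the algebraic/combinatorial structure of the Large Ideal Domain rather than from invariance of a mean. I would expect the definition of \FIID{} to ensure that the multiplicative semigroup of $R$ has enough structure (large ideals intersecting every relevant set) to build an idempotent ultrafilter that simultaneously captures the additive and multiplicative shifts $M_uA_{-u}$, and to let one run a van der Corput / spectral argument along $p$ to produce the lower bound. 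Verifying that this limit operator is genuinely positive, and that its members are affinely syndetic, is the technical heart of the matter; the passage from \eqref{eq_plan_plim} to the statement of the theorem is then essentially formal.
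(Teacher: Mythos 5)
Your overall strategy---replacing Ces\`aro averages by a limit along an idempotent-type ultrafilter $p$, so that each $R(B_i,\delta)$ lies in $p$ and the filter property handles the finite intersection---is indeed the paper's route: its Theorem \ref{thm_main} is your ultrafilter ergodic theorem, with the positivity $\langle V_A1_B,V_M1_B\rangle\geq\mu(B)^2$ extracted from commuting projections and Cauchy--Schwarz. However, your second step, converting ``the intersection lies in $p$'' into affine syndeticity, has a genuine gap: you propose to choose $p$ so that \emph{every} member of $p$ is affinely syndetic, and no such ultrafilter exists. Indeed, one can build two \emph{disjoint} affinely thick sets $T_1,T_2\subset R$ (enumerate the finite subsets $F_n\subset\AR$ so that each appears infinitely often, and greedily pick points $x_n$ making the finite sets $\theta_{F_n}x_n$ pairwise disjoint---possible because affine maps over an integral domain are injective---then send odd indices to $T_1$ and even ones to $T_2$). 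If all members of $p$ were affinely syndetic, then for any affinely thick $T$ the complement $R\setminus T$ would miss $T$ and hence fail to be affinely syndetic by Proposition \ref{prop_syndthick}, forcing $T\in p$; applying this to both $T_1$ and $T_2$ puts two disjoint sets in $p$, contradicting the filter property. The same objection applies to your claimed classical parallel: members of a single additive idempotent are not automatically syndetic either; Khintchine-type syndeticity comes from the return set being $IP^*$, i.e.\ from lying in \emph{every} idempotent.

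This is exactly how the paper closes the gap, and it changes the quantifier structure of your first step: the ergodic theorem must be proved for \emph{every} ultrafilter $p$ in the class $\D=\overline{\MI}\cap\mathcal{MMI}$, not for one cleverly constructed $p$. Then each $R(B_i,\delta)$ is $DC^*$ (it lies in every $p\in\D$), hence so does the finite intersection; and $DC^*$ sets are affinely syndetic not because the members of some ultrafilter are syndetic, but because every affinely thick set belongs to \emph{some} $p\in\D$ (Proposition \ref{prop_thickcentral}), so a $DC^*$ set meets every affinely thick set and Proposition \ref{prop_syndthick} yields syndeticity. With this correction---membership in all of $\D$, plus Proposition \ref{prop_thickcentral} replacing your single-ultrafilter mechanism---your outline matches the paper's proof; the remaining technical work, which you correctly flag as the heart of the matter, is the ergodic theorem itself (the paper's splitting $\phi=V_A\phi+\phi^\perp$, the van der Corput argument for $\phi^\perp$, and the \FIID{} hypothesis entering through Lemmas \ref{lemma_finiteindexip*}, \ref{lemma_upinmi} and \ref{lemma_integers2}).
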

Theorem \ref{thm_Rintersection} is proved in Section \ref{sec_proofmain}, where it is obtained as a corollary of an ultrafilter analogue of Corollary \ref{cor_introlargereturns} (see Theorem \ref{thm_mcentral}).
Roughly speaking, Theorem \ref{thm_mcentral} asserts that given an ultrafilter $p$ with certain rich combinatorial properties and an isometric anti-representation $(U_g)_{g\in\AR}$ of the affine semigroup $\AR$ on a Hilbert space ${\mathcal H}$, we have $\pl_u U_{M_uA_{-u}}f=Vf$, where\footnote{The symbol $\pl$ denotes limit along ultrafilter $p$. See Section \ref{sec_ultrafilters} for the relevant background on ultrafilters.} $V:{\mathcal H}\to{\mathcal H}$ is an orthogonal projection.
This in turn allows us to obtain, as a corollary, the following analogue of formulas \eqref{eq_doublekhintchine} and \eqref{eq_introkhintchine} for measure preserving actions $(T_g)_{g\in\AR}$ of $\AR$:
$$\pl_u\mu(T_{A_u}^{-1}B\cap T_{M_u}^{-1}B)\geq\mu(B)^2$$

\begin{remark}
  To appreciate the power of the ultrafilter approach, one should note that the Ces\`aro convergence results established in \cite{Bergelson_Moreira15} imply only the affine syndeticity of the intersections
\begin{equation}\label{eq_rmrk}
R\big(B_1,0\big)\cap...\cap R\big(B_t,0\big)
\end{equation}
of return sets $R(B_i,0)$, rather than the affine syndeticity of the intersection of the `optimal' return sets $R(B_i,\delta)$, as in \eqref{eq_thm_R_intersection}.
\end{remark}

Juxtaposing the (still unsolved) problem of finding monochromatic $\{x+y,xy\}$ patterns in $\N$ with the positive result contained in Corollary \ref{cor_intro4}, we see that there is a place for an `intermediate' result which would guarantee, for any finite coloring of $\Q$, the existence of a monochromatic configuration of the form $\{x+n,xn\}$ where $x\in\Q$, $n\in\N$.
As we will see, results of this kind can be obtained via ultrafilter methods developed in this paper. 
In particular, we have the following special cases of a more general Theorem \ref{cor_integer}, to be found in Section \ref{sec_proofmain}:

\begin{theorem}\label{cor_introinteger}
\

  \begin{enumerate}
    \item For any finite partition $\Q=C_1\cup\cdots\cup C_r$ of the rational numbers, there exists a cell $i\in\{1,\dots,r\}$ and many $x\in Q$, $n\in\N$ such that $\{x+n,xn\}\subset C_i$.
    \item More generally, if $K$ is a number field and $\mathcal{O}_K$ is its ring of integers, for any finite partition $K=C_1\cup\cdots\cup C_r$, there exists a cell $i\in\{1,\dots,r\}$ and many $x\in K$, $n\in\mathcal{O}_K$ such that $\{x+n,xn\}\subset C_i$.
    \item Let $\F$ be a finite field, let $K$ denote the field of rational functions (i.e. quotients of polynomials) over $\F$ and let $\F[x]$ denote the ring of polynomials.
        Then for any finite partition $K=C_1\cup\cdots\cup C_r$, there exists a cell $i\in\{1,\dots,r\}$ and many $f\in K$, $g\in\F[x]$ such that $\{f+g,fg\}\subset C_i$.
  \end{enumerate}
\end{theorem}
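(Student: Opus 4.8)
The plan is to deduce all three statements simultaneously from the following single assertion: if $R$ is a \FIID{} contained in an infinite countable field $K$ (with $K$ the field of fractions of $R$), then every finite coloring $K=C_1\cup\cdots\cup C_r$ has a cell containing many configurations $\{x+n,xn\}$ with $x\in K$ and $n\in R\setminus\{0\}$. The three cases are the instances $(R,K)=(\Z,\Q)$, $(\mathcal{O}_K,K)$, and $(\F[x],\F(x))$. For the first and third one needs only that $\Z$ and $\F[x]$ are \FIID{}s, as recorded in the excerpt; for the second I would first check that the ring of integers $\mathcal{O}_K$ of a number field is a \FIID{}. Since $\mathcal{O}_K$ is a Dedekind domain in which every nonzero ideal has finite index (the absolute norm of a nonzero ideal is finite), this should follow immediately from the definition given at the start of the next section.

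To establish the unified assertion I would run the argument behind Corollary \ref{cor_intro4}, replacing Corollary \ref{cor_introlargereturns} by the stronger Theorem \ref{thm_Rintersection}. Fix a double F\o lner sequence $(F_N)$ in $K$. Since the upper density $\bar d_{(F_N)}$ is subadditive and $\bar d_{(F_N)}(K)=1$, some cell, say $C_i$, satisfies $\bar d_{(F_N)}(C_i)\geq 1/r>0$. Applying the version of Furstenberg's correspondence principle for $\AK$ from \cite{Bergelson_Moreira15} to $C_i$ yields a probability measure preserving system $(\Omega,\mathcal{B},\mu,(T_g)_{g\in\AK})$ and a set $B\in\mathcal{B}$ with $\mu(B)\geq\bar d_{(F_N)}(C_i)>0$ such that $\bar d_{(F_N)}(g_1^{-1}C_i\cap g_2^{-1}C_i)\geq\mu(T_{g_1}^{-1}B\cap T_{g_2}^{-1}B)$ for all $g_1,g_2\in\AK$.

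The crucial point is that, because $\AR\subset\AK$, the given action restricts to a measure preserving action $(T_g)_{g\in\AR}$ of the affine semigroup on the same probability space, so Theorem \ref{thm_Rintersection} (with $t=1$ and any fixed $\delta\in(0,1)$) applies to it. It guarantees that the return set $R(B,\delta)=\{n\in R\setminus\{0\}:\mu(T_{M_n}^{-1}B\cap T_{A_n}^{-1}B)>\delta\mu(B)^2\}$ is affinely syndetic, which is exactly the precise sense in which there are \emph{many} admissible $n$; note that this set consists of elements of $R$ precisely because the maps $M_n,A_n$ range over the semigroup $\AR$. For any such $n$ the affine maps $A_n\colon x\mapsto x+n$ and $M_n\colon x\mapsto nx$ belong to $\AK$ (here $n\neq 0$ is used), and the correspondence inequality with $g_1=A_n$, $g_2=M_n$ gives $\bar d_{(F_N)}\big(\{x\in K: x+n\in C_i\text{ and }nx\in C_i\}\big)\geq\mu(T_{A_n}^{-1}B\cap T_{M_n}^{-1}B)>\delta\mu(B)^2>0$. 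Thus for each of the many admissible $n$ there are many $x\in K$ with $\{x+n,xn\}\subset C_i$, which is the desired conclusion.

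All of the analytic substance is imported from Theorem \ref{thm_Rintersection}; its sole but decisive role here is to force the return parameter $n$ into the subring $R$ rather than merely into $K$. This is the one place where the passage from the amenable group $\AK$ to the non-amenable semigroup $\AR$ is felt, and where the ultrafilter machinery underlying Theorem \ref{thm_Rintersection} is indispensable, since Corollary \ref{cor_introlargereturns} by itself would only produce $n\in K$. Accordingly, I expect no difficulty in the deduction above, which is a near-verbatim adaptation of the proof of Corollary \ref{cor_intro4}; the only genuine checks are the two bookkeeping points flagged in the first paragraph: that $\mathcal{O}_K$ lies in the class of \FIID{}s, and that the correspondence principle of \cite{Bergelson_Moreira15} transfers without change to the specific maps $A_n,M_n$ with $n$ restricted to $R$ (which it does, as these maps already lie in $\AK$).
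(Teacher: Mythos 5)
Your treatment of parts (2) and (3) is correct and is essentially the paper's own route: the paper proves an intermediate result (Theorem \ref{cor_integer}) whose proof is exactly your combination of the correspondence principle with the ultrafilter return-times theorem (the paper invokes Theorem \ref{thm_mcentral}, of which Theorem \ref{thm_Rintersection} is a corollary), and your check that $\mathcal{O}_K$ is a \FIID{} is already recorded in Proposition \ref{prop_fiidexample}(1).

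However, your unified assertion does not prove part (1). Part (1) demands $n\in\N$, while your argument only produces many $n\in\Z\setminus\{0\}$: nothing in it controls the sign of $n$. Worse, the notion of largeness you rely on --- affine syndeticity of $R(B,\delta)$ in $\Z$ --- is genuinely too weak to force an intersection with $\N$. For example, $-\N$ is affinely syndetic in $\Z$ (take $F=\{\mathrm{Id},\,M_{-1},\,A_{-1}\}\subset\AZ$; every $x\in\Z$ is sent into $-\N$ by some element of $F$), yet $-\N$ is disjoint from $\N$. So no appeal to affine syndeticity alone, however it is packaged, can locate a positive admissible $n$. This is precisely where the paper needs an extra ingredient: it uses the stronger conclusion that the return set $S$ in \eqref{eq_corollaryintegerset} is $DC^*$, i.e.\ $S\in p$ for \emph{every} $p\in\D$, together with Proposition \ref{prop_Nismultcentral*}, which asserts that $\N$ belongs to every non-principal multiplicative idempotent of $\beta\Z$, hence to every $p\in\D$; consequently $S\cap\N\in p$, so it is nonempty (and large in the ultrafilter sense). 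To repair your proof you must (i) upgrade your quotation of Theorem \ref{thm_Rintersection} to the $DC^*$ statement of Theorem \ref{thm_mcentral}, and (ii) supply the idempotent computation behind Proposition \ref{prop_Nismultcentral*}: if $p=pp$ is non-principal and $\N\notin p$, then $-\N\in p$, and by the definition of the product of ultrafilters $\{n\in\Z^*:-\N/n\in p\}\in p$; but that set equals $\N$, a contradiction.
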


The paper is organized as follows.
In Section \ref{sec_preli_affine} we define the class of \FIID{} rings and present some general facts about affine semigroups. In particular, we prove that the affine semigroup of a countable integral domain $R$ is amenable if and only if $R$ is a field. 
In Section \ref{sec_ultrafilters} we provide the necessary background on ultrafilters, and introduce the notion of $DC$ sets, which will play a fundamental role in the rest of the paper.
In Section \ref{sec_synd&thick} we introduce the notions of affinely thick and affinely syndetic, explore some of the properties of these families of sets and connect these notions with $DC$ sets.
In Section \ref{sec_proofmain} we state and prove the main theorems.
Finally, in Section \ref{sec_misc} we discuss some notions of largeness pertinent to the study of $\{x+y,xy\}$ patterns and formulate a conjecture which, if true, implies that for any finite partition of $\N$, one of the cells of the partition contains plenty of configurations $\{x+y,xy\}$.

\section{Preliminaries: large ideal domains, affine semigroups, double F\o lner sequences}\label{sec_preli_affine}
Throughout this paper we will work with a special class of rings:
\begin{definition}\label{def_fiid}
  A ring $R$ is called a \emph{large ideal domain} (\FIID) if it is an infinite countable integral domain and for any $x\in R\setminus\{0\}$, the ideal $xR$ is a finite index additive subgroup of $R$.
\end{definition}
Every field is trivially an \FIID.
The following proposition gives some non-trivial examples of \FIID{} rings.
\begin{proposition}\label{prop_fiidexample}The following rings are \FIID:
\begin{enumerate}
  \item Any integral domain $R$ whose underlying additive group is finitely generated.
  In particular, the ring of integers $\mathcal{O}_K$ of a number field $K$ satisfies this property.
  \item The ring of polynomials $\F[x]$ over a finite field $\F$.
\end{enumerate}
  \end{proposition}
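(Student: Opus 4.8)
The three defining requirements of an LID separate into the routine conditions --- countable and integral domain --- and the substantive one, namely finiteness of the index $[R:xR]$ for every $x\neq 0$. Countability is automatic in both cases (a finitely generated abelian group is countable, and $\F[x]$ is countable when $\F$ is finite); the integral domain hypothesis is given in (1) and clear in (2), since $\F[x]$ embeds in its field of rational functions; and infinitude is part of being an LID, holding for $\mathcal{O}_K$ (which contains $\Z$) and for $\F[x]$ (which contains all powers of $x$). So the plan is to concentrate on the finite-index property, treating the two families by the two distinct structural mechanisms at work.

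For part (1), the idea is to exploit that multiplication by a nonzero element is an injective additive endomorphism. Fix $x\in R\setminus\{0\}$ and consider $M_x:R\to R$, $r\mapsto xr$; since $R$ is a domain and $x\neq0$, this is an injective homomorphism of $(R,+)$ whose image is exactly $xR$. I would first observe that an \emph{infinite} integral domain with finitely generated additive group must have characteristic $0$: in characteristic $p$ the relation $p\cdot 1=0$ forces $(R,+)$ to be a finitely generated $\F_p$-vector space, hence finite. In characteristic $0$ the domain condition rules out additive torsion (if $m\cdot r=0$ with $r\neq 0$ then $m\cdot 1=0$), so the structure theorem gives $(R,+)\cong\Z^n$ for some $n\geq 1$. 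An injective endomorphism of $\Z^n$ has image of full rank $n$, and a full-rank subgroup of $\Z^n$ has finite index, equal to the absolute value of the determinant of $M_x$ in any basis, which is a nonzero integer. Hence $[R:xR]<\infty$. The ``in particular'' assertion then follows because the ring of integers $\mathcal{O}_K$ of a number field $K$ is a free $\Z$-module of rank $[K:\Q]$ (it admits an integral basis), so its additive group is finitely generated and the general argument applies.

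For part (2) the relevant tool is instead the division algorithm in $\F[x]$. Fix a nonzero $f\in\F[x]$ of degree $d$. Every polynomial is congruent modulo $f$ to a unique remainder of degree less than $d$, so the $|\F|^d$ polynomials of degree $<d$ form a complete set of coset representatives for $f\F[x]$ in $(\F[x],+)$. Since $\F$ is finite, this count is finite, giving $[\F[x]:f\F[x]]=|\F|^d<\infty$.

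The only genuinely non-formal point is the one in part (1): establishing that the additive group is free abelian of finite rank (ruling out torsion and pinning down the characteristic) and then invoking that an injective endomorphism of $\Z^n$ necessarily has finite-index image via the determinant. Once that fact is in place both parts are short, and the polynomial case (2) is entirely elementary.
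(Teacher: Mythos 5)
Your proof is correct and takes essentially the same approach as the paper: rule out additive torsion using the integral domain property, invoke the structure theorem to identify $(R,+)$ with $\Z^d$, note that multiplication by $x\neq 0$ is an injective additive endomorphism with image $xR$, conclude finite index, and handle $\F[x]$ by the division algorithm exactly as the paper does. The only cosmetic difference is in part (1)'s final step, where you cite the standard fact that a full-rank subgroup of $\Z^n$ has finite index equal to $|\det|$, whereas the paper proves this by hand by scaling the inverse matrix to have integer entries and deducing $n\Z^d\subset\phi(\Z^d)$.
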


\begin{proof}
  \begin{enumerate}
    \item 
    Since $(R,+)$ is an infinite finitely generated abelian group, it contains torsion-free elements and therefore the identity $1_R$ of $R$ has infinite order in $(R,+)$.
    If some element $x\in R$ had torsion, say $nx=0$ for some $n\in\N$, then $(n1_R)x=0$, contradicting the absence of $0$ divisors.
    Using the classification of finitely generated abelian groups we can now represent $(R,+)$ as $\Z^d$ for some $d\in\N$.
    
    For any non-zero $x\in R$, the map $\phi:y\mapsto xy$ is an injective endomorphism of $(R,+)$ (injectivity follows from the absence of divisors of $0$) whose image $\phi(R)$ is the ideal $xR$.
    We claim that the image of any injective homomorphism $\phi:\Z^d\to\Z^d$ has a finite index in $\Z^d$, which will finish the proof.

    Indeed, representing $\phi$ as a matrix, injectivity implies that the determinant of $\phi$ is non-zero.
    Therefore it has an inverse $\phi^{-1}$ with entries in $\Q$.
    Multiplying $\phi^{-1}$ by the least common multiple $n$ of its entries we obtain a matrix $n\phi^{-1}$ with coefficients in $\Z$.
    Therefore $n\Z^d=(n\phi^{-1})\phi(\Z^d)\subset \phi(\Z^d)$, so $[\Z^d:\phi(\Z^d)]\leq[\Z^d:n\Z^d]=n^d<\infty$, proving the claim.

    \item Let $f\in\F[x]$ have degree $d$.
    For any $g\in\F[x]$ one can divide $g$ by $f$ and obtain $g=fq+r$ where $\deg r<d$.
    Therefore $g-r$ belongs to the ideal $f\F[x]$.
    It follows that the set of polynomials $r$ with degree smaller than $d$ form a complete set of coset representatives for $f\F[x]$.
    Since $\F$ is finite, there are only finitely many such representatives and hence the index of $f\F[x]$ is finite as desired.
  \end{enumerate}
\end{proof}
\begin{remark}
  There are number fields whose ring of integers is not a principal ideal domain (PID).
Hence, part (1) of Proposition \ref{prop_fiidexample} includes some \FIID{} which are not PID.
We also observe that not every PID is a \FIID.
Indeed, the ring $\Q[x]$ of all polynomials with rational coefficients is a PID, but the ideal $x\Q[x]$ has infinite index as an additive subgroup of $\Q[x]$, so $\Q[x]$ is not a \FIID.
\end{remark}

Some of the results in this paper are true only for fields; we will indicate the distinction in each case and we will use the letter $K$ to denote a field.

Let $R$ be a ring, we denote by $\Rs$ the set of its non-zero elements.
An \emph{affine transformation} of $R$ is a map $f:R\to R$ of the form $f(x)=ux+v$ with $u\in\Rs,v\in R$.
The \emph{affine semigroup} of $R$ is the semigroup of all affine transformations of $R$ (the semigroup operation being composition of functions) and will be denoted by $\AR$.
Observe that $\AR$ is a group if and only if $R$ is a field.

For each $v\in R$, the map $x\mapsto x+v$ will be denoted by $A_v$ (add $v$) and, for each $u\in\Rs$, the map $x\mapsto ux$ will be denoted by $M_u$  (multiply by $u$).
Note that the distributive law in $R$ can be expressed as:
\begin{equation}
  \label{eq_distr}
  M_uA_v=A_{uv}M_u
\end{equation}
The affine transformations $A_v$ with $v\in R$ form the \emph{additive subgroup} of $\AR$, denoted by $S_A$.
The affine transformations $M_u$ with $u\in\Rs$ form the \emph{multiplicative sub-semigroup} of $\AR$, denoted by $S_M$.
Observe that $S_A$ is isomorphic to the additive group $(R,+)$ and $S_M$ is isomorphic to the multiplicative semigroup $(\Rs,\cdot)$.

Note that the map $x\mapsto ux+v$ is the composition $A_vM_u$.
Thus the sub-semigroups $S_M$ and $S_A$ generate the semigroup $\AR$.
When $K$ is a field, $\AK$ is the semidirect product of the (abelian) groups $S_A$ and $S_M$ and hence is amenable.
However, as it was pointed out in Remark 6.2 in \cite{Bergelson_Moreira15}, the semigroup $\AZ$ is not amenable.
In fact we have:

\begin{proposition}\label{proposition_amenableringfield}
  Let $R$ be a countable integral domain.
  The affine semigroup $\AR$ is amenable if and only if $R$ is a field.
\end{proposition}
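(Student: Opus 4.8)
The forward implication is essentially already recorded in the text preceding the statement: when $R=K$ is a field, $\AK$ is the semidirect product of the abelian groups $S_A$ and $S_M$, and an extension of an amenable group by an amenable group is amenable. So the content lies in the reverse implication, which I would establish in contrapositive form: if $R$ is not a field, then $\AR$ carries no left-invariant mean. Since $R$ is not a field, fix a nonzero non-unit $u\in R$. Then $uR$ is a proper additive subgroup of $(R,+)$ (if $uR=R$ then $u$ would be invertible), so we may choose $j\in R\setminus uR$, and the coset $j+uR$ is disjoint from $uR$.

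Suppose, for contradiction, that $\AR$ admits a left-invariant mean $m$, which I view as a finitely additive, left-invariant probability measure on the subsets of $\AR$. The key preliminary observation is that $\AR$ is left-cancellative: if $gh=gh'$ with $g=A_bM_a$ and $a\neq0$, then $a\bigl(h(x)-h'(x)\bigr)=0$ for all $x$, and the absence of zero divisors forces $h=h'$. Consequently $g^{-1}(gE)=E$ for every $g\in\AR$ and every $E\subset\AR$, and this lets one upgrade left-invariance from the form $m(g^{-1}E)=m(E)$ to $m(gE)=m(E)$ (apply the first identity with $gE$ in place of $E$). In particular $m(g\AR)=m(\AR)=1$ for every $g\in\AR$. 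Next I would describe the two left ideals $M_u\AR$ and $(A_jM_u)\AR$. Using the distributive relation \eqref{eq_distr} and $M_uM_s=M_{us}$, a general element $A_wM_s$ of $\AR$ (with $s\in\Rs$, $w\in R$) satisfies $M_u(A_wM_s)=A_{uw}M_{us}$; since evaluating an affine map $f$ at $0$ recovers its additive part, every $f\in M_u\AR$ has $f(0)\in uR$, whereas every $f\in(A_jM_u)\AR$ has $f(0)\in j+uR$. As $uR$ and $j+uR$ are disjoint, the sets $M_u\AR$ and $(A_jM_u)\AR$ are disjoint. But each is a left translate of $\AR$, hence has $m$-measure $1$, so finite additivity yields $1=m(\AR)\ge m(M_u\AR)+m\bigl((A_jM_u)\AR\bigr)=2$, a contradiction.

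The heart of the argument, and the only place where the field hypothesis is used, is the disjointness of the two left translates $M_u\AR$ and $(A_jM_u)\AR$: it is exactly the failure of the left-reversibility (Ore-type) condition $g_1\AR\cap g_2\AR\neq\emptyset$, which any left-invariant mean forbids. This failure is produced precisely by the non-invertibility of $u$; were $u$ a unit we would have $uR=R$, no coset $j+uR$ disjoint from $uR$ would exist, and $\AR$ would in fact be a group. I expect the only point requiring genuine care in a full write-up to be the bookkeeping of conventions—keeping straight left versus right translations and invoking left-cancellativity correctly in order to pass from $m(g^{-1}E)=m(E)$ to $m(gE)=m(E)$—rather than any substantive difficulty, since once $m(g\AR)=1$ is in hand the paradoxical pair of disjoint full-measure left ideals closes the argument at once.
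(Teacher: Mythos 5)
Your proof is correct, and it rests on the same combinatorial mechanism as the paper's: a non-unit $u$ makes $uR$ a proper additive subgroup of $R$, hence disjoint from some coset $j+uR$, and invariance of a mean forces two corresponding disjoint sets to both have full measure. The difference is where the mean lives. The paper transfers the hypothesized invariant mean on $\AR$ to a finitely additive mean $\lambda$ on ${\mathcal P}(R)$ via the natural action of $\AR$ on $R$, and then needs invariance only in the form $\lambda(\{x:g(x)\in E\})=\lambda(E)$; the contradiction $1=\lambda(uR)=\lambda(uR+a)$ against $\lambda(uR)+\lambda(uR+a)\leq\lambda(R)=1$ is then immediate. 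You instead stay on the semigroup itself, replacing $uR$ and $j+uR$ by the left ideals $M_u\AR$ and $(A_jM_u)\AR$ (disjoint because evaluation at $0$ sends them into $uR$ and $j+uR$ respectively), and this costs you an ingredient the paper never needs: left-cancellativity of $\AR$, used to upgrade the invariance $m(g^{-1}E)=m(E)$ to $m(gE)=m(E)$ so that each left ideal has measure $1$. That upgrade is valid as you stated it, and your cancellativity argument is fine, but it is the step a referee would scrutinize: for general semigroups, left amenability does \emph{not} yield invariance under forward translation (e.g.\ in the null semigroup with $xy=0$ for all $x,y$, the mean $m(E)=1$ iff $0\in E$ is left invariant, yet $m(gE)\neq m(E)$ for nonempty $E$ not containing $0$), so the cancellativity hypothesis is genuinely doing work. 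What your version buys is that it exhibits the obstruction intrinsically: the failure of left reversibility, $M_u\AR\cap(A_jM_u)\AR=\emptyset$, which no left-invariant mean can tolerate. What the paper's version buys is economy: pushing the mean down to $R$ sidesteps cancellativity entirely and reduces the disjointness argument to one line about ideals and cosets.
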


\begin{proof}
  As was explained above, if $R$ is a field then $\AR$ is amenable.
  Assume now that $\AR$ is amenable.
  The semigroup $\AR$ acts naturally on $R$ by affine transformations, therefore the amenability of $\AR$ implies the existence of a finitely additive mean $\lambda:{\mathcal P}(R)\to[0,1]$
defined on all the subsets of $R$ which is invariant under all affine transformations (this means that $\lambda\big(\{x\in R:g(x)\in E\}\big)=\lambda(E)$ for any $E\subset R$ and $g\in\AR$).
  Given $x\in R^*$, we have $1=\lambda(R)=\lambda(xR)$ (because the map $y\mapsto xy$ belongs to $\AR$).

  Assume, for the sake of a contradiction, that $R$ is not a field and let $x\in R^*$ be a non-invertible element.
  The ideal $xR$ is not the whole ring and hence there is a shift $xR+a$ which is disjoint from $xR$.
  The invariance of $\lambda$ implies that $\lambda(xR)=\lambda(xR+a)$, but disjointness implies that $\lambda\big(xR\cup(xR+a)\big)=\lambda(xR)+\lambda(xR+a)=2\lambda(xR)$.
  We now conclude that
  $$1=\lambda(xR)=\frac12\lambda\big(xR\cup(xR+a)\big)\leq\frac12\lambda(R)=\frac12$$
  which gives the desired contradiction.
\end{proof}
When $g\in\AR$ is an affine transformation of $R$ and $E\subset R$ is any subset, we define
\begin{equation}\label{eq_deftau}\theta_gE=\left\{g(x):x\in E\right\}\qquad\text{and}\qquad\theta_g^{-1}E=\{x\in R:g(x)\in E\}\end{equation}
Throughout this paper, in order to make the notation less cumbersome, and when no confusion can arise, we will adopt the following convention: Let $(T_g)_{g\in\AR}$ be a measure preserving action of $\AR$ (on some probability space) and let $(U_g)_{g\in\AR}$ be a isometric (anti-)representation of $\AR$ (on some Hilbert space).
For $v\in R$ and $u\in\Rs$ we will write $A_v$ instead of $\theta_{A_v}$, $T_{A_v}$ or $U_{A_v}$, and $M_u$ instead of $\theta_{M_u}$, $T_{M_u}$ or $U_{M_u}$.

\begin{definition}
Let $K$ be a field.
  A \emph{double F\o lner sequence} in $K$ is a sequence $(F_N)$ of finite subsets of $K$ such that for every $u\in K^*$ we have
  $$\lim_{N\to\infty}\frac{|F_N\cap(F_N+u)|}{|F_N|}=\lim_{N\to\infty}\frac{|F_N\cap(F_Nu)|}{|F_N|}=1$$
\end{definition}
It follows from \cite[Proposition 2.4]{Bergelson_Moreira15} that double F\o lner sequences exist in any countable field $K$.
This fact also follows from Theorem \ref{thm_syndensity} below.

\begin{definition}
  Let $K$ be a field, let $E\subset K$ and let $(F_N)$ be a double F\o lner sequence in $K$.
  The \emph{upper density} of $E$ with respect to $(F_N)$ is
  $$\bar d_{(F_N)}(E):=\limsup_{N\to\infty}\frac{|E\cap F_N|}{|F_N|}$$
  and the \emph{lower density} of $E$ with respect to $(F_N)$ is
  $$\underline d_{(F_N)}(E):=\liminf_{N\to\infty}\frac{|E\cap F_N|}{|F_N|}$$

\end{definition}
Several basic properties of the upper and lower densities with respect to a F\o lner sequence in a group remain true for densities with respect to double F\o lner sequences, and the proofs carry over to this setting.
We list some of these facts in the next lemma.

 \begin{lemma}\label{lemma_density}
 Let $K$ be a field, let $(F_N)$ be a double F\o lner sequence in $K$, let $E_1,E_2\subset K$ and let $g\in\AK$.
 \begin{enumerate}
 \item\label{lemma_density_shift} $\bar d_{(F_N)}(\theta_gE)=\bar d_{(F_N)}(E)$ and $\underline d_{(F_N)}(\theta_gE)=\underline d_{(F_N)}(E)$.
 \item\label{lemma_density_upperunion} $\bar d_{(F_N)}(E_1\cup E_2)\leq\bar d_{(F_N)}(E_1)+\bar d_{(F_N)}(E_2)$
 \item\label{lemma_density_lowerunion} $\underline d_{(F_N)}(E_1\cup E_2)\geq\underline d_{(F_N)}(E_1)+\underline d_{(F_N)}(E_2)$.
 \item\label{lemma_density_sum} If $E_2=K\setminus E_1$, then $\bar d_{(F_N)}(E_1)+\underline d_{(F_N)}(E_2)=1$.
 \end{enumerate}
 \end{lemma}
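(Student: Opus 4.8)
The plan is to treat part (1) as the only substantive claim and to dispatch parts (2)--(4) by elementary counting. For part (1), I would first observe that the collection of bijections $g$ of $K$ for which $\bar d_{(F_N)}(\theta_g E)=\bar d_{(F_N)}(E)$ holds for every $E$ (and likewise for $\underline d_{(F_N)}$) is closed under composition; since every $g\in\AK$ is a composition $g=A_uM_v$ of an additive shift and a multiplicative shift, it suffices to verify invariance separately for the generators $A_u$ (with $u\in K$) and $M_v$ (with $v\in K^*$).

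For an additive shift I would use that $|\theta_{A_u}E\cap F_N|=|(E+u)\cap F_N|=|E\cap(F_N-u)|$, so the two counts $|E\cap F_N|$ and $|\theta_{A_u}E\cap F_N|$ differ by at most $|F_N\triangle(F_N-u)|=2\big(|F_N|-|F_N\cap(F_N-u)|\big)$. Since $-u\in K^*$ whenever $u\neq0$ (and $u=0$ is trivial), the double F\o lner condition gives $|F_N\cap(F_N-u)|/|F_N|\to1$, hence $|F_N\triangle(F_N-u)|/|F_N|\to0$; therefore $|\theta_{A_u}E\cap F_N|/|F_N|$ and $|E\cap F_N|/|F_N|$ have the same $\limsup$ and the same $\liminf$. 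The multiplicative case is entirely analogous: $|\theta_{M_v}E\cap F_N|=|vE\cap F_N|=|E\cap v^{-1}F_N|$, the discrepancy is bounded by $|F_N\triangle v^{-1}F_N|=2\big(|F_N|-|F_N\cap v^{-1}F_N|\big)$, and multiplying the intersection through by $v$ (using commutativity of $K$) together with the multiplicative half of the F\o lner condition yields $|F_N\cap v^{-1}F_N|/|F_N|=|vF_N\cap F_N|/|F_N|\to1$.

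Parts (2)--(4) are then immediate from the relation between $|\,\cdot\,\cap F_N|$ and set operations combined with the standard inequalities for $\limsup$ and $\liminf$. For (2), the bound $|(E_1\cup E_2)\cap F_N|\le|E_1\cap F_N|+|E_2\cap F_N|$ and $\limsup(a_N+b_N)\le\limsup a_N+\limsup b_N$ give subadditivity of $\bar d_{(F_N)}$. For (3), when $E_1$ and $E_2$ are disjoint one has the exact identity $|(E_1\cup E_2)\cap F_N|=|E_1\cap F_N|+|E_2\cap F_N|$, and $\liminf(a_N+b_N)\ge\liminf a_N+\liminf b_N$ gives superadditivity of $\underline d_{(F_N)}$. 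For (4), if $E_2=K\setminus E_1$ then $|E_1\cap F_N|+|E_2\cap F_N|=|F_N|$ exactly, so $|E_2\cap F_N|/|F_N|=1-|E_1\cap F_N|/|F_N|$; taking $\liminf$ and using $\liminf(1-a_N)=1-\limsup a_N$ yields $\underline d_{(F_N)}(E_2)=1-\bar d_{(F_N)}(E_1)$.

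The only place requiring genuine care is part (1): the reduction to the two families of generators and the separate use of the additive and multiplicative halves of the double F\o lner condition, including the passage from $+u$ to $-u$ and from $v^{-1}F_N$ to $vF_N$. The remaining parts are routine, and part (3) is to be understood for disjoint $E_1,E_2$ (the inequality fails without disjointness, as the case $E_1=E_2$ shows).
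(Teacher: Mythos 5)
Your proof is correct and is essentially the argument the paper intends: the paper states this lemma without proof, remarking only that the standard F\o lner-density arguments carry over, and your reduction to the generators $A_u$ and $M_v$ (using each half of the double F\o lner condition, with the passages $u\mapsto -u$ and $v^{-1}F_N\mapsto vF_N$) together with the symmetric-difference estimate and the routine $\limsup$/$\liminf$ inequalities is exactly that standard argument. Your caveat on part (3) --- that the superadditivity inequality requires $E_1,E_2$ disjoint, as the case $E_1=E_2$ shows --- is a correct reading of an implicit hypothesis in the paper's statement (and disjointness is indeed how the lemma is used, e.g.\ in part (4) and in Theorem \ref{thm_syndensity}).
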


\section{Auxiliary results involving ultrafilters}\label{sec_ultrafilters}To prove Theorems \ref{thm_Rintersection} and \ref{cor_introinteger} we will use ultrafilters on $R$.
For the reader's convenience, we provide in this section a brief review of necessary ultrafilter background.
For a more detailed account see \cite{Bergelson10} and, for a comprehensive treatment, see \cite{Hindman_Strauss98}.
\begin{definition}Let $X$ be a countable infinite set.
An ultrafilter on $X$ is a family $p$ of subsets of $X$ such that
\begin{itemize}
\item $X\in p$.
\item If $E_1\in p$ and $E_1\subset E_2$ then $E_2\in p$.
\item If $E_1\in p$ and $E_2\in p$ then $E_1\cap E_2\in p$.
\item $E\in p\iff (R\setminus E)\notin p$.
\end{itemize}
The set of all ultrafilters on $X$ is denoted by $\beta X$.
\end{definition}
For any $u\in X$, the \emph{principal ultrafilter} $p_u$ is  defined by the rule $E\in p_u\iff u\in E$.
By a slight abuse of notation we will often denote $p_u$ by $u$.

The set $\beta X$ of all ultrafilters on $X$ can be identified with the Stone-\v{C}ech compactification of the (discrete) set $X$ (see Theorem 3.27 in \cite{Hindman_Strauss98}).
The space $\beta X$ is a compact Hausdorff space (cf. Theorem 3.18 in \cite{Hindman_Strauss98}) with the topology generated by the clopen sets
\begin{equation}\label{eq_closure}\overline{E}:=\{p\in\beta X:E\in p\}\qquad\qquad\forall E\subset X\end{equation}
Let $p\in\beta X$ be an ultrafilter, let $Y$ be a compact Hausdorff space and let $f:X\to Y$ be a function.
It is not hard to check that there exists a unique point $y\in Y$ such that for every neighborhood $U$ of $y$ we have $\{u\in X:f(u)\in U\}\in p$.
We denote this by $\displaystyle\pl_uf(u)=y$ (one can also write $\displaystyle y=\lim_{u\to p}f(u)$, but we stick with the former notation since it is more suggestive of the analogy with Ces\`aro limits).

Now let $X=R$ be a ring.
One can extend the operations of addition and multiplication from $R$ to $\beta R$ as follows.
Given $p,q\in\beta R$ we define
\begin{equation}\label{eq_ultrasum}p+q=\{E\subset R:\{u\in R:A_u^{-1}E\in q\}\in p\}
\end{equation}
\begin{equation}\label{eq_ultraproduct}pq=\{E\subset R:\{u\in R:M_u^{-1}E\in q\}\in p\}
\end{equation}
The operations defined by (\ref{eq_ultrasum}) and (\ref{eq_ultraproduct}) are associative in $\beta R$ (cf. Theorems 4.1, 4.4 and 4.12 in \cite{Hindman_Strauss98}).
However (for the rings we deal with) these operations do not commute and fail to satisfy the distributive law.
Nevertheless, we have
\begin{proposition}\label{prop_ultradistr}
  Let $u\in R$ and $p,q\in\beta R$. Then
  \begin{itemize}
    \item $u+p=p+u$ and $up=pu$.
    \item $(p+q)u=pu+qu$.
  \end{itemize}
\end{proposition}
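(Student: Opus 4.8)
The plan is to verify each of the three claimed identities directly from the definitions \eqref{eq_ultrasum} and \eqref{eq_ultraproduct}, using the fact that $u$ is a principal ultrafilter. The key simplification throughout is that for a principal ultrafilter $u$ (which, per the stated convention, we identify with $p_u$), membership $E\in u$ is equivalent to $u\in E$, so $u$-limits collapse to evaluation: $\pl_{v\in u} f(v)=f(u)$. This means that whenever the outer ultrafilter in \eqref{eq_ultrasum} or \eqref{eq_ultraproduct} is principal, the condition ``$\{v: \cdots\}\in u$'' simplifies to plugging in $v=u$.

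For the first bullet, I would compute $u+p$ and $p+u$ separately. By \eqref{eq_ultrasum}, a set $E$ lies in $u+p$ iff $\{v\in R: A_v^{-1}E\in p\}\in u$, which by principality of $u$ means $A_u^{-1}E\in p$. On the other hand $E\in p+u$ iff $\{v\in R: A_v^{-1}E\in u\}\in p$; here the inner condition $A_v^{-1}E\in u$ means $u\in A_v^{-1}E$, i.e. $A_v(u)=u+v\in E$, i.e. $v\in A_u^{-1}E$ (using commutativity of addition in $R$), so this set equals $A_u^{-1}E$ and the condition becomes $A_u^{-1}E\in p$. Thus both $u+p$ and $p+u$ consist of exactly those $E$ with $A_u^{-1}E\in p$, proving $u+p=p+u$. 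The identity $up=pu$ is entirely analogous, replacing $A_v$ by $M_v$ and using commutativity of multiplication in $R$; the only point to note is that $u\in R^*$ is needed for $M_u$ to be defined, which is implicit since $M_u\in\AR$.

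For the second bullet, $(p+q)u=pu+qu$, I would again unwind definitions, this time with $u$ appearing on the right. Expanding the left side via \eqref{eq_ultraproduct}, $E\in (p+q)u$ iff $\{v: M_v^{-1}E\in u\}\in p+q$, and by principality of $u$ the inner condition $M_v^{-1}E\in u$ says $u\in M_v^{-1}E$, i.e. $vu\in E$. For the right side, one expands $pu+qu$ using \eqref{eq_ultrasum} and then \eqref{eq_ultraproduct}, similarly reducing the $u$-factor by principality. The computation hinges on the distributive-type relation between the additive and multiplicative structure; concretely, the map $v\mapsto vu$ intertwines $A_w^{-1}$ with $A_{wu}^{-1}$ by virtue of $(v+w)u=vu+wu$, which is exactly the right-distributive law in $R$. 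Carrying this bookkeeping through shows both sides comprise the same family of sets.

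The main obstacle is purely notational rather than conceptual: one must keep careful track of which ultrafilter is the ``outer'' one in each nested definition, and apply the principality collapse only to the principal factor $u$ at the correct stage, since the non-principal factors $p,q$ cannot be simplified. The substantive mathematical input is minimal, amounting to the commutativity of $+$ and $\cdot$ in the ring $R$ for the first bullet and the right-distributive law for the second; no amenability or \FIID{} hypotheses are used. The proof is essentially a disciplined unfolding, and the only genuine risk is a sign- or order-error in the intertwining step for $(p+q)u=pu+qu$, where left-versus-right multiplication interacts with the asymmetry of the ultrafilter operations.
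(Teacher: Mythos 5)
Your proof is correct, and it matches what the paper intends: Proposition \ref{prop_ultradistr} is stated there without proof, being regarded as a routine verification from the definitions \eqref{eq_ultrasum} and \eqref{eq_ultraproduct} (in the spirit of the cited facts from Hindman--Strauss). Your argument — collapsing the principal factor $u$ by evaluation, using commutativity of $+$ and $\cdot$ in $R$ for the first bullet, and the intertwining $A_w^{-1}M_u^{-1}=M_u^{-1}A_{uw}^{-1}$ (i.e.\ the distributive law $(v+w)u=vu+wu$) for the second — is precisely that verification, and the bookkeeping you outline does close up correctly.
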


 One can easily check that for each $p,q\in\beta R$ we have (cf. Remark 4.2 in \cite{Hindman_Strauss98}):
\begin{equation}\label{eq_ultralimit}p+q=\pl_u(u+q)\qquad\qquad pq=\pl_u(uq)\end{equation}

An ultrafilter $p\in\beta R$ is an \emph{additive idempotent} if $p+p=p$, and it is a \emph{multiplicative idempotent} if $pp=p$.
Observe that $1\in\beta R$ is a multiplicative idempotent and $0\in\beta R$ is both an additive idempotent and a multiplicative idempotent.
The following fundamental result due to Ellis (see, for instance, Theorem 3.3 in \cite{Bergelson96}) guarantees the existence of idempotents in any compact semigroup.
\begin{lemma}\label{lemma_ellis}
Let $(S,\circ)$ be a compact Hausdorff semigroup such that for each $s\in S$ the function $x\mapsto x\circ s$ from $S$ to itself is continuous. Then there exists $s\in S$ such that $s\circ s=s$.
\end{lemma}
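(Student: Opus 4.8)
The plan is to use a Zorn's lemma argument to produce a minimal nonempty closed subsemigroup of $S$, and then to show that any element of such a minimal subsemigroup is already idempotent. This is the classical Ellis--Numakura strategy, and the hypothesis that each right multiplication $x\mapsto x\circ s$ is continuous will enter the argument at two decisive points. First I would consider the family $\mathcal{C}$ of all nonempty closed subsemigroups of $S$, partially ordered by reverse inclusion. It is nonempty since $S\in\mathcal{C}$. Given a chain in $\mathcal{C}$, its intersection is again a subsemigroup and is closed; moreover it is nonempty by compactness, since the members of a chain enjoy the finite intersection property. Thus every chain has an upper bound, and Zorn's lemma yields a minimal element $T\in\mathcal{C}$, that is, a nonempty closed subsemigroup containing no proper nonempty closed subsemigroup.

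Next, fixing an arbitrary $s\in T$, I would analyse the set $T\circ s=\{t\circ s:t\in T\}$. Since $s\in T$ and $T$ is closed under $\circ$, we have $T\circ s\subseteq T$; it is nonempty, as it contains $s\circ s$; and it is a subsemigroup, because the computation $(t_1\circ s)\circ(t_2\circ s)=(t_1\circ s\circ t_2)\circ s$ together with $t_1\circ s\circ t_2\in T$ shows the product lies again in $T\circ s$. Crucially, $T\circ s$ is the image of the compact set $T$ under the continuous map $x\mapsto x\circ s$, hence is itself compact and therefore closed in the Hausdorff space $S$. By minimality of $T$ we conclude $T\circ s=T$.

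Finally, since $s\in T=T\circ s$, the set $R=\{t\in T:t\circ s=s\}$ is nonempty. It is a subsemigroup: if $t_1,t_2\in R$, then $(t_1\circ t_2)\circ s=t_1\circ(t_2\circ s)=t_1\circ s=s$, so $t_1\circ t_2\in R$. It is also closed, being the intersection of $T$ with the preimage of the closed set $\{s\}$ under the continuous map $x\mapsto x\circ s$. By minimality of $T$ this forces $R=T$, and in particular $s\in R$, which says precisely that $s\circ s=s$. Thus $s$ is the desired idempotent.

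The two places where the right-topological hypothesis is indispensable are the closedness of $T\circ s$ (as the continuous image of a compact set) and the closedness of $R$ (as the continuous preimage of a point); without continuity of $x\mapsto x\circ s$ neither set need be closed, and the minimality argument collapses. The main subtlety to watch is the non-commutativity of $\circ$: in verifying that $T\circ s$ is a subsemigroup one must not rearrange factors freely, but instead regroup as $(t_1\circ s\circ t_2)\circ s$ and use only the closure of $T$ under the operation to place $t_1\circ s\circ t_2$ back in $T$. Once the minimal $T$ has been secured, the remainder is a short and purely algebraic chain of deductions, so I expect the only real work to be the careful bookkeeping in these two closedness verifications.
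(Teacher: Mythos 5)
Your proof is correct and complete: it is the classical Ellis--Numakura argument (Zorn's lemma for a minimal nonempty closed subsemigroup $T$, then $T\circ s=T$ and $\{t\in T:t\circ s=s\}=T$ by minimality), with the two uses of right-continuity correctly identified and the non-commutativity handled properly. The paper itself does not prove this lemma but simply cites it as Ellis' theorem (Theorem 3.3 in \cite{Bergelson96}), and your argument is essentially the standard proof found in that reference, so there is nothing to correct or compare further.
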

In what follows, Lemma \ref{lemma_ellis} will be repeatedly applied to closed sub-semigroups of $(\beta R,+)$ and $(\beta(\Rs),\cdot)$.

Since $R$ is an integral domain and $\beta(\Rs)=(\beta R)\setminus\{0\}$ is closed in $\beta R$, it follows from \eqref{eq_ultralimit} that $\beta(\Rs)$ is closed under multiplication.
In view of Proposition \ref{prop_ultradistr} and \eqref{eq_ultralimit} we have that, for each $u\in R$, both maps $A_u:p\mapsto p+u$ and $M_u:p\mapsto pu$ are continuous.
Therefore we can define topological dynamical systems $(\beta R,S_A)$ and $(\beta(\Rs),S_M)$, where $S_A$ and $S_M$ are the additive and multiplicative sub-semigroups of $\AR$, respectively (cf. Section \ref{sec_preli_affine}).
Invoking again \eqref{eq_ultralimit} one can check that any closed $S_A$-invariant subset of $\beta R$ is a semigroup for addition, and any closed $S_M$-invariant subset of $\beta\Rs$ is a semigroup for multiplication.

By Zorn's lemma, there exist minimal non-empty compact $S_A$-invariant subsets of $\beta R$ and minimal non-empty compact $S_M$-invariant subsets of $\beta(\Rs)$.
An \emph{additive minimal idempotent} is a non-principal ultrafilter $p\in\beta R$ which belongs to a minimal compact $S_A$-invariant set and such that $p+p=p$.
A \emph{multiplicative minimal idempotent} is a non-principal ultrafilter $p\in\beta(\Rs)$ which belongs to a minimal compact $S_M$-invariant set and such that $pp=p$.

\begin{definition}\label{def_ami}
  Let $R$ be a ring. We denote by $\MI$ the set of all additive minimal idempotents in $\beta R$ and we denote by $\mathcal{MMI}$ the set of all multiplicative minimal idempotents in $\beta(\Rs)$.
\end{definition}
A set $C\subset R$ is called \emph{additively central} if there exists $p\in\MI$ such that $C\in p$.
Similarly, any member of an ultrafilter $p\in\mathcal{MMI}$ is called \emph{multiplicatively central}\footnote{The notion of \emph{central set} in $\Z$ was introduced by Furstenberg in topologico-dynamical terms \cite{Furstenberg81}. 
Furstenberg's definition of central sets makes sense in any semigroup (see \cite[Definition 6.2]{Bergelson_Hindman90}).
 One can show (see \cite[Theorems 6.8 and 6.11]{Bergelson_Hindman90}) that a subset of a countable semigroup is central if and only if it belongs to a minimal idempotent ultrafilter.}.
In this paper we are interested in sets $C\subset R$ which are simultaneously additively and multiplicatively central.

Unfortunately, the sets $\MI$ and $\mathcal{MMI}$ are in general disjoint (cf \cite[Corollary 13.15]{Hindman_Strauss98}).
However, at least when $R$ is an \FIID, the closure $\overline{\MI}$ has non-trivial intersection with $\mathcal{MMI}$ (see Proposition \ref{prop_thickcentral} below).

\begin{definition}
\label{def_central}
\
\begin{itemize}
\item Let $\D=\overline{\MI}\cap\mathcal{MMI}$.
\item A set $C\subset R$ is called $DC$ (double central) if there exists an ultrafilter $p\in\D$ such that $C\in p$.
\item A set $C\subset R$ is called $DC^*$ if it has non-empty intersection with every $DC$ set\footnote{We call the reader's attention to the fact that there is no relation between the $^*$ in $DC^*$ and the $^*$ in $R^*$.}.
\end{itemize}
\end{definition}

Observe that a set $C\subset R$ is $DC^*$ if and only if it is contained in every ultrafilter $p\in\D$ (this follows directly from Definition \ref{def_central} and the definition of ultrafilters).

We will need four more facts about ultrafilters which do not appear in the literature in the form that we need.
Lemma \ref{lemma_centralIP0} is the adaptation of Theorem 3.5 from \cite{Bergelson_Hindman94}, where the analogous result is proved for $\N$. The proof carries over to our setup.

\begin{lemma}\label{lemma_centralIP0}
Let $R$ be a countable integral domain, let $p\in\mathcal{MMI}$ and let $B\in p$.
Then for every $r\in\N$ there exists a set $Z\subset R$ with cardinality $|Z|=r$ and such that the set of finite sums of $Z$ satisfies
$$FS(Z):=\left\{\sum_{i\in Z'}i\ \Big|\ \emptyset\neq Z'\subset Z\right\}\subset B$$
\end{lemma}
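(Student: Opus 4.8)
The plan is to adapt the argument of Bergelson--Hindman \cite[Theorem~3.5]{Bergelson_Hindman94} (proved there for $\N$) to a general countable integral domain, proceeding by induction on $r$. The base case $r=1$ is immediate: since $B\neq\emptyset$ we pick any $z\in B$ and note $FS(\{z\})=\{z\}\subset B$. For the inductive step the natural idea is to reduce the construction of an additive $FS$-set inside $B$ to the construction of a single \emph{dilated} additive configuration. Concretely, I would look for an element $d\in\Rs$ and elements $t_1,\dots,t_r\in R$ such that the dilate $d\cdot FS(\{t_1,\dots,t_r\})$ is contained in $B$; the distributive law, recorded in $\eqref{eq_distr}$, gives $d\sum_{i\in Z'}t_i=\sum_{i\in Z'}dt_i$, so that setting $z_i:=dt_i$ yields $FS(\{z_1,\dots,z_r\})=d\cdot FS(\{t_1,\dots,t_r\})\subset B$. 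Thus it suffices to realize an $r$-element additive $FS$-configuration as a dilate of a subset of $B$.

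The crux --- and where all the difficulty lies --- is genuinely cross-structural: the ultrafilter $p$ is a \emph{multiplicative} idempotent, whereas the sets $FS(Z)$ carry purely \emph{additive} information. The engine available from $p$ alone is the fundamental lemma for idempotents, which, applied multiplicatively (using $pp=p$ and the identity $pp=\pl_u(up)$ from $\eqref{eq_ultralimit}$ together with $\eqref{eq_ultraproduct}$), only manufactures finite \emph{products} inside $B$. To obtain additive sums one must use that $p$ is \emph{minimal}: since $p\in\mathcal{MMI}$, the set $B$ is multiplicatively central, hence multiplicatively piecewise syndetic, and it is precisely this combinatorial richness that forces $B$ to contain dilated copies of every finite additive $FS$-configuration. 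Establishing the existence of the pair $\big(d,\{t_1,\dots,t_r\}\big)$ above from multiplicative centrality is the heart of the matter; this is the step I expect to be the main obstacle, and it is exactly the content carried over from \cite{Bergelson_Hindman94} (via the Central Sets machinery and the associated minimal dynamical system).

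Finally, once such a configuration is produced I would record that, because $R$ is an infinite integral domain, the generators can be taken pairwise distinct and nonzero, so that $|Z|=r$ as required. I would also remark that nothing in the argument uses special features of $\N$: only the idempotency and minimality of $p$, the algebra of $\beta R$ summarized in $\eqref{eq_ultralimit}$ and $\eqref{eq_ultraproduct}$, and the distributive law $\eqref{eq_distr}$ are invoked, which is why the proof transfers with no essential change from the integers to an arbitrary countable integral domain.
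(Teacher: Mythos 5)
There is a genuine gap: your proposal contains no proof of the one step that constitutes the lemma. Everything you actually establish is trivial --- the base case $r=1$, the identity $d\cdot FS(\{t_1,\dots,t_r\})=FS(\{dt_1,\dots,dt_r\})$ coming from distributivity, and the fact that generators can be taken distinct and nonzero. The substantive claim, that a multiplicatively central set must contain a dilated copy of an additive $FS$-configuration of every finite size, is exactly the statement of Lemma \ref{lemma_centralIP0} (modulo that trivial dilation identity), and you explicitly leave it unproved, calling it ``the heart of the matter'' and ``the main obstacle'' and deferring it to the very result of \cite{Bergelson_Hindman94} you are supposed to be adapting. As written, the argument is circular. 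The induction on $r$ is also vacuous: the inductive hypothesis is never used anywhere in the sketch.

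The missing mechanism, as carried out in the paper, is the following. Let $T\subset\beta R$ be the set of non-principal ultrafilters \emph{all} of whose members contain $FS(Z)$ with $|Z|$ arbitrarily large; $T\neq\emptyset$ because every additive idempotent lies in $T$ (Theorem 5.8 of \cite{Hindman_Strauss98}). Let $Y$ be a minimal closed $S_M$-invariant subset of $\beta\Rs$ with $p\in Y$. One then shows: (i) for $q\in T$ the product $qp$ lies in $Y$ (by \eqref{eq_ultralimit} and closedness of $Y$) \emph{and} in $T$ --- this is where your dilation trick really enters: if $E\in qp$ then $\{u:M_u^{-1}E\in p\}\in q$ contains some $FS(Z)$ with $|Z|=r$, and since the finite intersection $\bigcap_{u\in FS(Z)}M_u^{-1}E$ belongs to $p$, any nonzero $a$ in it gives $FS(Za)=FS(Z)a\subset E$ with $|Za|=|Z|$ by cancellation; (ii) $Y\cap T$ is closed and $S_M$-invariant (again by dilation, $M_uFS(Z)=FS(uZ)$). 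Minimality of $Y$ then forces $Y\cap T=Y$, so $p\in Y\subset T$, i.e.\ $B\in p$ contains the required configurations. So the key idea you are missing is to multiply an \emph{additive} idempotent into the minimal \emph{multiplicative} system containing $p$ and let minimality do the work --- no Central Sets Theorem machinery is invoked. I will add that your passing remark about piecewise syndeticity could be developed into a genuinely different proof (thickness of $\bigcup_{n\in F}M_n^{-1}B$ yields a dilate of a long $FS$-configuration, and the finite-unions theorem of Folkman--Graham--Rothschild pigeonholes a monochromatic sub-configuration into a single $M_n^{-1}B$), but no such argument appears in your proposal, so it cannot be credited as a completed alternative route.
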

\begin{proof}

Let $T\subset\beta R$ be the collection of all non-principal ultrafilters $p$ such that any member $A\in p$ contains a set of the form $FS(Z)$ with $Z$ having arbitrarily large cardinality (sets $A$ satisfying this property are called IP$_0$ sets).
It follows from Theorem 5.8 in \cite{Hindman_Strauss98} that every additive idempotent is in $T$, so $T$ is non-empty.

Since $p\in\mathcal{MMI}$, there exists some minimal subsystem $(Y,S_M)$ of $(\beta\Rs,S_M)$ such that $p\in Y$.
We claim that $Y\cap T$ is non-empty.

Let $q\in T$.
We have that $up=M_up\in Y$ for every $u\in\Rs$. It follows from equation (\ref{eq_ultralimit}) and the fact that $Y$ is closed that $qp\in Y$ as well.
Let $E\in qp$.
By definition, $\{u\in R: M_u^{-1}E\in p\}\in q$.
Thus for each $r\in\N$ there exists $Z\subset K$ with $|Z|=r$ and such that $FS(Z)\subset\{u\in R:M_u^{-1}E\in p\}$.
Since $FS(Z)$ is finite, the intersection $\bigcap_{u\in FS(Z)}M_u^{-1}E$ is also in $p$ and hence is infinite.
Let $a$ be a non-zero element in that intersection; we have that $a\in M_u^{-1}E$ for every $u\in FS(Z)$ and hence $FS(Z)a=FS(Za)\subset E$.
Observe that $|Za|=|Z|$ because there are no divisors of $0$.
Since $E\subset qp$ and $|Z|$ were chosen arbitrarily, we conclude that $qp\in T$.
This proves the claim.

Next, let $q\in Y\cap T$ and let $u\in\Rs$.
We trivially have $uq\in Y$.
Furthermore, if $A\in uq$ then $M_u^{-1}A\in q$ and hence if contains $FS(Z)$ for a set $Z$ of arbitrary finite cardinality.
But then $A$ contains $M_uFS(Z)=FS(uZ)$ and hence $uq\in T$.
This implies that $uq\in Y\cap T$ and hence $(Y\cap T,S_M)$ is a subsystem of $(\beta\Rs,S_M)$.
Since $(Y,S_M)$ is a minimal system, we conclude that $Y\cap T=Y$.
This implies that $Y\subset T$.
Hence $p\in T$ and we are done.
\end{proof}

We will also need the following technical lemma
\begin{lemma}\label{lemma_finiteindexip*}
Let $G$ be a group and let $H\subset G$ be a normal subgroup with finite index.
Then for any ultrafilter $p\in\beta G$ in the closure of the idempotents we have $H\in p$.
\end{lemma}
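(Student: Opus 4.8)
The plan is to reduce the statement to the single claim that every idempotent itself contains $H$, and then to propagate this to the closure by a soft topological argument. First I would record that, since $H$ has finite index, $G$ is partitioned into finitely many cosets of $H$, and any ultrafilter $p\in\beta G$ contains exactly one of them (for a finite partition of $G$ into disjoint cells, exactly one cell belongs to $p$). Denote this unique coset by $C\in p$.

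The heart of the matter is to show $H\in p$ whenever $pp=p$. I would unwind the definition of the semigroup operation on $\beta G$ (the analogue of \eqref{eq_ultraproduct} for the group $G$): $pp=\{E\subset G:\{u\in G:u^{-1}E\in p\}\in p\}$. Applying this to $E=C$ and using $pp=p$, we get that the set $\{u\in G:u^{-1}C\in p\}$ lies in $p$. Now I would compute this set explicitly. Writing $C=g_0H$, the translate $u^{-1}C=u^{-1}g_0H$ is again a single left coset of $H$, and it lies in $p$ precisely when it coincides with the unique coset $C$. The equality $u^{-1}g_0H=g_0H$ is equivalent to $u^{-1}\in g_0Hg_0^{-1}$, and here normality of $H$ gives $g_0Hg_0^{-1}=H$, so the condition becomes simply $u\in H$. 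Therefore $\{u\in G:u^{-1}C\in p\}=H$, and since this set lies in $p$ we conclude $H\in p$, as desired.

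It remains to pass from idempotents to their closure. Here I would invoke the fact that, by the definition of the topology on $\beta G$ in \eqref{eq_closure}, the set $\overline{H}=\{p\in\beta G:H\in p\}$ is clopen, in particular closed. The previous step shows that every idempotent belongs to $\overline{H}$; since $\overline{H}$ is closed, it contains the closure of the set of idempotents, which is exactly the assertion of the lemma.

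I expect the only delicate point to be the coset computation, where normality of $H$ must be used at exactly the right place (to replace the conjugate $g_0Hg_0^{-1}$ by $H$); everything else is either a finiteness observation or the standard fact that the membership sets $\overline{E}$ are clopen. One could alternatively phrase the middle paragraph conceptually: the quotient map $\pi:G\to G/H$ extends to a continuous semigroup homomorphism $\beta G\to\beta(G/H)=G/H$ onto the finite group $G/H$, which must send any idempotent to the identity element $H$, again yielding $H\in p$; but the direct coset computation avoids citing the homomorphism-extension theorem and keeps the argument self-contained.
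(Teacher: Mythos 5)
Your proof is correct and follows essentially the same route as the paper's: a unique coset of $H$ lies in the idempotent $p$, the idempotency equation forces the set $\{u:u^{-1}C\in p\}=H$ (using normality to replace $g_0Hg_0^{-1}$ by $H$) to lie in $p$, and the passage to the closure of the idempotents is the observation that $\overline{H}=\{q\in\beta G:H\in q\}$ is closed. The paper merely performs the closure reduction at the start rather than at the end, which is an immaterial difference.
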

\begin{proof}
  The set of ultrafilters containing $H$ is a closed set, hence we can assume that $p$ is itself an idempotent.
  Since $H$ has only finitely many cosets, exactly one of them, say $aH$ is in $p$.
  Therefore, given $g\in G$ we have $g^{-1}aH\in p$ if and only if $g^{-1}a\in aH$.
  This is equivalent to $g\in aHa^{-1}=H$ (because $H$ is normal).
  Since $aH\in p=p+p$ we conclude
  $$\{g\in G:g^{-1}aH\in p\}\in p\iff H\in p$$
\end{proof}

A particular case of Lemma \ref{lemma_finiteindexip*} is when $R$ is a \FIID{}, $H$ is a non-trivial ideal and $p\in\D$.
If $p\in\beta(\Rs)$ contains an ideal $bR$ for some $b\in\Rs$, then one can define an ultrafilter $b^{-1}p$ as the family of sets $E\subset R$ such that $bE\subset p$. Observe that in this case $bq=p$.

The following lemma is the analogue of Theorem 5.4 in \cite{Bergelson_Hindman90} (where it is stated and proved for $\N$).

\begin{lemma}\label{lemma_upinmi}
Let $R$ be a \FIID, let $p\in\overline{\MI}$ and let $u\in\Rs$.
Then both $up$ and $u^{-1}p$ belong to $\overline{\MI}$.
\end{lemma}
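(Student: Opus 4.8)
The plan is to realize ``multiplication by $u$'' as an isomorphism of topological semigroups and to track what it does to the smallest ideal of $(\beta R,+)$. Write $\mathcal{K}$ for the smallest two-sided ideal of $(\beta R,+)$, so that $\MI$ is precisely the set of idempotents lying in $\mathcal{K}$ and $\overline{\MI}$ is its closure. First I would observe that, by Proposition \ref{prop_ultradistr} together with the identity $up=pu$, the map $M_u\colon p\mapsto up$ is a \emph{continuous additive homomorphism} of $(\beta R,+)$: indeed $u(p+q)=(p+q)u=pu+qu=up+uq$. Its image is $M_u(\beta R)=\overline{uR}$, a closed subsemigroup of $(\beta R,+)$. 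Since $u$ is not a zero divisor, $x\mapsto ux$ is a bijection of $R$ onto $uR$, so its Stone--\v Cech extension $M_u$ restricts to a homeomorphism, and hence a topological semigroup \emph{isomorphism}, of $(\beta R,+)$ onto $(\overline{uR},+)$. Because $\overline{\MI}$ is closed and $M_u$ is continuous, it will suffice to prove the claim for $p\in\MI$ and then pass to the closure; note also that every $p\in\overline{\MI}$ satisfies $uR\in p$ by Lemma \ref{lemma_finiteindexip*}, so $p$, $up$ and $u^{-1}p$ all lie in $\overline{uR}$, and $u^{-1}p$ is exactly the image of $p$ under the inverse isomorphism.

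The crux is to compare the smallest ideal of $\overline{uR}$ with $\mathcal{K}$, and this is where the \FIID{} hypothesis enters decisively. Since $uR$ is a finite index subgroup of $(R,+)$, the quotient homomorphism $\pi\colon R\to R/uR$ onto the \emph{finite} group $R/uR$ extends to a surjective continuous homomorphism $\tilde\pi\colon(\beta R,+)\to R/uR$. Because a surjective continuous homomorphism of compact right topological semigroups carries the smallest ideal onto the smallest ideal (see \cite{Hindman_Strauss98}), and the smallest ideal of the finite group $R/uR$ is all of $R/uR$, there is some $q\in\mathcal{K}$ with $\tilde\pi(q)=0$, i.e. $uR\in q$, i.e. $q\in\overline{uR}$. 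Thus $\overline{uR}\cap\mathcal{K}\neq\emptyset$, and by the standard fact that a closed subsemigroup meeting $\mathcal{K}$ has smallest ideal equal to $\overline{uR}\cap\mathcal{K}$ (see \cite[Theorem 1.65]{Hindman_Strauss98}), the minimal idempotents of $\overline{uR}$ are exactly the elements of $\MI$ that happen to lie in $\overline{uR}$.

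To conclude, the isomorphism $M_u\colon(\beta R,+)\to(\overline{uR},+)$ carries minimal idempotents to minimal idempotents in both directions. Given $p\in\MI$, we have $up\in\overline{uR}$, and $up$ is a minimal idempotent of $\overline{uR}$, hence by the previous paragraph $up\in\MI$; symmetrically, $p\in\overline{uR}$ is a minimal idempotent of $\overline{uR}$, so its preimage $u^{-1}p=M_u^{-1}(p)$ is a minimal idempotent of $\beta R$, giving $u^{-1}p\in\MI$. Finally, since $M_u$ and its inverse (defined on $\overline{uR}\supseteq\overline{\MI}$) are continuous, applying $f(\overline A)\subseteq\overline{f(A)}$ to $f=M_u$ and $f=M_u^{-1}$ with $A=\MI$ yields $up\in\overline{\MI}$ and $u^{-1}p\in\overline{\MI}$ for every $p\in\overline{\MI}$. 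The main obstacle is the middle step: $M_u$ is an isomorphism only onto the \emph{proper} subsemigroup $\overline{uR}$, so a priori minimality refers to $\overline{uR}$ rather than to $\beta R$; reconciling the two is exactly what forces the finite index of $uR$ into play, and is the single point at which being a \FIID{} (rather than an arbitrary integral domain) is used.
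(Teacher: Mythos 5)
Your proof is correct, but it follows a genuinely different route from the paper's. The paper works entirely inside its dynamical definition of $\MI$: after reducing to $p\in\MI$ by continuity and checking idempotency of $up$ and $u^{-1}p$ via Proposition \ref{prop_ultradistr}, it takes the minimal compact $S_A$-invariant set $X$ containing $p$ and verifies by hand that $u^{-1}X$ and $F+uX$ (where $F$ is a finite set of coset representatives of $uR$ in $R$) are again minimal systems; the \FIID{} hypothesis enters through the finiteness of $F$. You instead invoke the algebraic structure theory of $(\beta R,+)$: $M_u$ is a topological isomorphism of $(\beta R,+)$ onto the closed subsemigroup $(\overline{uR},+)$, the smallest ideal of $\overline{uR}$ equals $\overline{uR}\cap\mathcal{K}$ by \cite[Theorem 1.65]{Hindman_Strauss98} once $\overline{uR}$ meets $\mathcal{K}$, and isomorphisms preserve minimal idempotents. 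Your route is shorter and makes the role of finite index conceptually transparent (it forces $\overline{uR}$ to meet $\mathcal{K}$ and to contain all of $\overline{\MI}$), but it imports machinery the paper's self-contained argument avoids, and it silently uses the identification of the paper's $\MI$ (non-principal idempotents in minimal compact $S_A$-invariant sets) with the set of idempotents of the smallest two-sided ideal $\mathcal{K}$; that identification is standard — minimal compact $S_A$-invariant sets are exactly the minimal left ideals of $(\beta R,+)$, one direction of which the paper itself notes in proving Proposition \ref{prop_thickcentral} — but it deserves an explicit sentence in your write-up. Finally, your quotient-group argument that $\overline{uR}\cap\mathcal{K}\neq\emptyset$ is sound (you only need the easy half of the cited fact, namely that the image of $\mathcal{K}$ under the extension of $R\to R/uR$ is an ideal of the finite group $R/uR$ and hence all of it), though it can be bypassed entirely: $\mathcal{K}$ contains idempotents by Ellis's lemma and Zorn, and every additive idempotent already lies in $\overline{uR}$ by Lemma \ref{lemma_finiteindexip*}, which you invoke anyway.
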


\begin{proof}

Since $M_u:p\mapsto up$ and $M_u^{-1}:p\mapsto u^{-1}p$ are continuous (on their respective domains), it suffices to show that if $p\in\MI$ then also both $up$ and $u^{-1}p$ are in $\MI$.
It follows directly from Proposition \ref{prop_ultradistr} that $up+up=u(p+p)=up$, so $up$ is an additive idempotent.
Checking the definitions easily yields that $u^{-1}p$ is an additive idempotent.

All that remains to show is that $up$ and $u^{-1}p$ belong to minimal subsystems of $(\beta R,S_A)$.
\begin{itemize}

  \item[(1)]$u^{-1}p\in\MI$

 Let $X=\overline{\{v+p:v\in R\}}$ be the minimal compact $S_A$-invariant subset of $\beta R$ such that $p\in X$.
 It is not hard to check that the set $u^{-1}X:=\{q\in\beta R:bq\in X\}$ is $S_A$-invariant, compact, and contains $u^{-1}p$.

Since $R$ is a \FIID, the ideal $uR$ has finite index as an additive subgroup of $R$.
Therefore there exists a finite set $F\subset R$ of coset representatives such that $R=F+uR$.
Choose $F$ minimal with this property and such that $F\cap uR=\{0\}$.

If $Z\subset u^{-1}X$ is any compact $S_A$-invariant subset, than $F+uZ$ is a compact subset of $X$.
 We now show that $F+uZ$ is also invariant.
 Indeed, observe that any $v\in R$ can be decomposed as $v=a+uv'$ with $a\in F$ and $v'\in R$; thus if $a_1+uz\in F+uZ$ is arbitrary (with $a_1\in F$ and $z\in Z$) and $v_1\in R$, then $v_1+a_1+uz=a+uv'+uz=a+u(v'+z)\in F+uZ$ by invariance of $Z$.

Since $X$ is minimal, this implies that $F+uZ$ is either empty (in which case $Z$ is empty) or coincides with $X$.
In the second case we claim that $Z=u^{-1}X$.
Indeed, let $q\in u^{-1}X$, then it satisfies $uq\in X=F+uZ$, whence $uq=a+uz$ for some $a\in F$ and $z\in Z$.
Therefore $uR$ is in both $uz$ and $a+uz$ which implies that $a\in uR\cap uR=\{0\}$.
This means that $uq\in uZ$ and hence $q\in Z$, proving the claim.

It follows that $u^{-1}X$ is a compact minimal $S_A$-invariant subset of $\beta R$.
 Since $u^{-1}p\in u^{-1}X$ it follows that $u^{-1}p\in\MI$ as desired.

  \item[(2)] $up\in\MI$
\

Let $Y=\overline{\{v+up:v\in R\}}\subset\beta R$.
It suffices to show that $Y$ is itself minimal (compact and $S_A$-invariant being immediate consequences of its construction).
Recalling that $F\subset R$ is a finite set such that $R=F+uR$, we can rewrite
$$Y=\overline{\big\{(a+uv)+up:a\in F;v\in R\big\}}=F+u\overline{\{v+p:v\in R\}}=F+uX,$$
where in the second equality we used Proposition \ref{prop_ultradistr}.
Let $Z\subset Y$ be a non-empty compact $S_A$-invariant subset; we need to show that $Z=Y$.
Let $Z_1=\{q\in X:uq\in Z\}=X\cap u^{-1}Z$.

We claim that $F+uZ_1=Z$.
It is clear that $F+uZ_1\subset Z$ (for $Z$ is $S_A$-invariant).
Next let $q\in Z$ be arbitrary, we need to show that $q\in F+uZ_1$.
There is exactly one $a\in F$ such that $a+uR\in q$.
Let $r$ be the ultrafilter defined by $E\in r\iff a+uE\in q$ (observe that $r$ is indeed an ultrafilter because $a+uR\in q$ and hence $R\in r$), we will show that $r\in X$.
Indeed let $E\in r$, since $a+uE\in q\in Y$, we have that $v+a+uE\in up$ for some $v\in R$.
By definition this means that $u^{-1}(v+a+uE)\in p$, so $v+a\in uR$ and $u^{-1}(v+a)+E\in p$.
Finally this implies that $E\in -u^{-1}(v+a)+p$, and since $E\in r$ was arbitrary it follows that $r\in\overline{\{v'+p:v'\in R\}}=X$ as desired.
Next observe that $a+ur=q\in Z$.
Since $Z$ is invariant, this implies that $ur\in Z$ as well, and hence $r\in Z_1$, so $q=a+ur\in F+uZ_1$ as desired.

Since $Z$ is non-empty, it follows that $Z_1$ is non-empty.
Next we show that $Z_1$ is $S_A$-invariant.
For any $v\in R$ and $q\in Z_1$ we have $u(v+q)=uv+uq\in uv+Z\subset Z$ since $Z$ is invariant, so $v+q\in Z_1$ as desired.
Since $Z_1\subset X$ and $X$ is minimal we have $Z_1=X$.
But this means that $Z=F+uZ_1=F+uX=Y$ and hence $Y$ is minimal as desired.
\end{itemize}
\end{proof}

\begin{lemma}\label{lemma_kplim}
Let $X$ be a compact space and let $(x_u)_{u\in R}$ be a sequence in $X$ indexed by a countable ring $R$.
Then for each $k\in\Rs$ and $p\in\beta R$ we have $\displaystyle\pl_ux_{ku}=k\pl_ux_u$.
\end{lemma}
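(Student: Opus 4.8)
The plan is to reduce the identity to the defining property of ultrafilter limits together with an explicit description of the product $kp$ as a pushforward ultrafilter. I first recall that for $q\in\beta R$, the statement $q\text{-}\lim_u x_u=y$ means exactly that $\{u\in R:x_u\in U\}\in q$ for every neighbourhood $U$ of $y$; since $X$ is compact Hausdorff, such a $y$ exists and is unique, so it suffices to verify this property for the correct ultrafilter. Interpreting the right-hand side $k\,\pl_u x_u$ as the limit $(kp)\text{-}\lim_u x_u$ (the ultrafilter governing the limit being left-multiplied by $k$), the goal becomes: show that $y:=\pl_u x_{ku}$ satisfies the defining property of the $(kp)$-limit.

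The crux is identifying $kp$. Since $k\in\Rs$ is identified with its principal ultrafilter, I would unwind \eqref{eq_ultraproduct} to obtain, for every $E\subset R$,
$$E\in kp\iff\{u\in R:M_u^{-1}E\in p\}\ni k\iff M_k^{-1}E\in p.$$
Thus $kp=\{E\subset R:M_k^{-1}E\in p\}$ is exactly the pushforward of $p$ under the map $u\mapsto ku$. The one point demanding care is the side on which $k$ acts: because $(\beta R,\cdot)$ is non-commutative, it is this \emph{left} product $kp$ — not $pk$ — that encodes the reindexing $u\mapsto ku$, and getting this backwards would produce a false statement.

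Granting this, the conclusion follows in one line. Fixing a neighbourhood $U$ of $y=\pl_u x_{ku}$, the definition of the $p$-limit gives $\{u:x_{ku}\in U\}\in p$. Taking $E=\{v:x_v\in U\}$ and noting $M_k^{-1}E=\{u:x_{ku}\in U\}$, the displayed equivalence yields $E\in kp$. As $U$ was an arbitrary neighbourhood of $y$, this is precisely the statement $(kp)\text{-}\lim_u x_u=y$, i.e.\ $k\,\pl_u x_u=\pl_u x_{ku}$.

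I do not expect any serious obstacle: beyond the hypotheses already in force, the proof needs only the defining property of $p$-limits and the pushforward identity for $kp$, so the entire difficulty is the (routine but easy-to-slip) bookkeeping of the non-commutative side. A slightly slicker but less self-contained variant would invoke the general composition law for ultrafilter limits, $\pl_u(g\circ\phi)(u)=(\phi_*p)\text{-}\lim g$ with $\phi(u)=ku$ and $g(v)=x_v$; this gives the result immediately once $\phi_*p=kp$ is recorded, but the direct argument above does not even require continuity of the multiplication maps.
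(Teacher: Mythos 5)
Your proof is correct and takes essentially the same route as the paper's: interpret $k\pl_u x_u$ as the $(kp)$-limit, identify $kp$ via \eqref{eq_ultraproduct} as the pushforward of $p$ under $u\mapsto ku$ (i.e.\ $E\in kp\iff M_k^{-1}E\in p$), and unwind the definition of the $p$-limit on an arbitrary neighbourhood. Your cautionary aside about $kp$ versus $pk$ is harmless but moot in this paper: since $k$ is principal and the rings here are commutative, Proposition \ref{prop_ultradistr} gives $kp=pk$.
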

\begin{proof}
Let $\displaystyle x=\pl_ux_{ku}$ and let $U\subset X$ be a neighborhood of $x$.
By definition, the set $E=\{u\in R:x_{ku}\in U\}\in p$.
Note that $E=\{u\in R:x_u\in U\}/k$, and hence $\{u\in R:x_u\in U\}\in kp$.
Since $U$ is an arbitrary neighborhood of $x$ we conclude that $\displaystyle k\pl_ux_u=x$.
\end{proof}

\section{Affine syndeticity and thickness}\label{sec_synd&thick}
In this section we will develop the notions of affinely syndetic and affinely thick subsets of $R$.
The definitions and proofs are parallel to the usual notions of syndetic and thick.
Recall that, for a discrete semigroup $G$, a set $S\subset G$ is \emph{syndetic} if finitely many translates of $S$ cover $G$.
More precisely, $S$ is (left) syndetic in $G$ if there exists a finite set $F\subset G$ such that every $g\in G$ can be written as $g=xs$ with $s\in S$ and $x\in F$.

Recall from equation (\ref{eq_deftau}) the notation $\theta_gE=\{g(x):x\in E\}$ for a set $E\subset R$ and $g\in\AR$.
When $F\subset\AR$, $S\subset R$ and $x\in R$ we write
$$\theta_F^{-1}S:=\bigcup_{g\in F}\theta_{g}^{-1}S\qquad\text{ and }\qquad\theta_Fx:=\bigcup_{g\in F}g(x)$$
We slightly generalize here the definition of affine syndeticity, given in the Introduction for fields, to general rings:
\begin{definition}
Let $R$ be a ring.
A set $S\subset R$ is \emph{affinely syndetic} if there exists a finite set $F\subset\AR$ such that $\theta_F^{-1}S=R$.
\end{definition}

Observe that if a set $S\subset\Rs$ is syndetic in either the group $(R,+)$ or the semigroup $(\Rs,\cdot)$, then $S$ is affinely syndetic.
Indeed, assume, for instance, that $S$ is syndetic in $(R,+)$ and let $F\subset R$ be a finite set such that $S-F=R$.
Then considering the subset $\{A_u:u\in F\}\subset\AR$ we deduce that $\theta_F^{-1}S=R$ and hence $S$ is affinely syndetic.
On the other hand, $S$ can be affinely syndetic and not be syndetic for neither the group $(R,+)$ nor the semigroup $(\Rs,\cdot)$ (this follows from Example \ref{example_thick} and Proposition \ref{prop_syndthick} below).

Recall that, for a discrete semigroup $G$, a set $T\subset G$ is \emph{thick} if it contains a shift of an arbitrary finite set.
More precisely, $T$ is (right) thick in $G$ if for every finite set $F\subset G$ there exists $g\in G$ such that $Fg\subset T$.
\begin{definition}
A set $T\subset R$ is \emph{affinely thick} if for every finite set $F\subset\AR$ there exists $x\in R$ such that $\theta_Fx\subset T$.
\end{definition}

Observe that if $T\subset R$ is affinely thick, then it is thick in both the group $(R,+)$ and the semigroup $(\Rs,\cdot)$.
The following example shows that there exist sets $T$ which are not affinely thick (even when $R$ is a field) but thick in both $(R,+)$ and $(\Rs,\cdot)$:
\begin{example}\label{example_thick}We take the ring $R=\Q$ of rational numbers.
Let $(G_N)$ be an increasing sequence of finite subsets of $\Q$ whose union is $\Q$.
For any sequence $(a_N)\subset\Q^*$, the set
$$E=\left(\bigcup_{N=1}^\infty\big(a_{2N-1}+G_{2N-1}\big)\right)\cup \left(\bigcup_{N=1}^\infty\big(a_{2N}G_{2N}\big)\right)= \bigcup_{N=1}^\infty E_N$$ is additively thick and multiplicatively thick, where $E_N=a_N+G_N$ when $N$ is odd and $E_N=a_NG_N$ when $N$ is even.
However, if $(a_N)$ is growing sufficiently fast, then $E$ is not affinely thick.
Indeed, for every point $x\in\Q$ we may have
$$\theta_{\{Id,A_1M_2\}}x=\{x,x+1,2x\}\not\subset E$$

To see this, let $a_0=1$ and $E_0:=\{0\}$.
Let $\Delta G_N$ denote the set defined by $\Delta G_N=\{x_2-x_1,x_3-x_2,\dots,x_k-x_{k-1}\}$ where $x_1<x_2<\dots<x_k$ is an ordering of the elements of $G_N$.
Let $M_N=\min\big\{|x|:x\in G_N\setminus\{0\}\big\}$.
Define recursively
$$a_N=\left\{\begin{array}{ccc}2\max\left(E_{N-1}\right)+\max\left(G_N\right)-2\min\left(G_N\right)&\text{if}&N\text{ is odd}\\\frac1{\min(\Delta G_N)}+\frac{2\max(E_{N-1})}{M_N}&\text{if}&N\text{ is even}\end{array}\right.$$

Note that if $N$ is even and $x\in E_N$, then $x+1\notin E_N$.
If $N$ is odd and $x\in E_N$, then $x\geq\min(G_N)+a_N$ which implies that $2x>\max(G_N)+a_N$ and hence $2x\notin E_N$.
Thus for any $N\in\N$ and $x\in\Q$, the set $\{x,x+1,2x\}$ is not a subset of $E_N$.

Since $\min\{|x|:x\in E_{N+1}\setminus\{0\}\}>2\max\{|x|:x\in E_N\}$, if $x\in E_N$, then $2x\notin E_{N+1}$ (and in fact $2x\notin E_L$ for any $L>N$) and hence $\{x,x+1,2x\}$ is not a subset of $E$ for any $x\in\Q$
\end{example}

The following proposition is an immediate consequence of the definitions.
\begin{proposition}\label{prop_syndthick}
  A set $S\subset R$ is affinely syndetic if and only if it has non-empty intersection with every affinely thick set.
  A set $T\subset R$ is affinely thick if and only if it has non-empty intersection with every affinely syndetic set.
\end{proposition}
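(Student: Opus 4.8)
The plan is to reduce both biconditionals to a single complementation lemma, in exact parallel with the classical syndetic/thick duality in semigroups. The lemma I would isolate first is: a set $S\subset R$ is affinely syndetic if and only if its complement $R\setminus S$ is \emph{not} affinely thick. Granting this, the two equivalences in the proposition follow by taking complements, together with the obvious monotonicity of the two families (any set containing an affinely thick set is itself affinely thick, since a witness $x$ with $\theta_Fx\subset T$ also works for any superset of $T$; and, because each $\theta_g^{-1}$ is monotone, any set containing an affinely syndetic set is affinely syndetic).

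To prove the lemma I would simply unwind the quantifiers. By definition $S$ fails to be affinely syndetic precisely when $\theta_F^{-1}S\neq R$ for every finite $F\subset\AR$, that is, when for every such $F$ there is some $x\in R$ with $x\notin\theta_g^{-1}S$ for all $g\in F$. Since $x\notin\theta_g^{-1}S$ means exactly $g(x)\notin S$, this says that for every finite $F$ there is an $x$ with $\theta_Fx\subset R\setminus S$, which is precisely the statement that $R\setminus S$ is affinely thick. Thus ``$S$ not affinely syndetic'' and ``$R\setminus S$ affinely thick'' are literally the same condition, proving the lemma. Reading the identical computation with the roles of $S$ and its complement interchanged yields the symmetric form: $T$ is affinely thick if and only if $R\setminus T$ is not affinely syndetic.

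For the first equivalence, suppose $S$ is affinely syndetic, witnessed by a finite $F$ with $\theta_F^{-1}S=R$, and let $T$ be any affinely thick set. Applying affine thickness to this $F$ yields $x\in R$ with $\theta_Fx\subset T$, while $x\in\theta_F^{-1}S=R$ gives some $g\in F$ with $g(x)\in S$; hence $g(x)\in S\cap T$, so $S$ meets $T$. Conversely, if $S$ is not affinely syndetic, the lemma makes $R\setminus S$ affinely thick, and $S$ is disjoint from it, so $S$ fails to meet some affinely thick set. The second equivalence is obtained in exactly the same fashion with ``syndetic'' and ``thick'' interchanged, using the symmetric form of the lemma and the monotonicity of the affinely syndetic family.

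Since every step is a direct manipulation of the defining quantifiers, there is no substantial obstacle here; the only points requiring a little care are to keep the quantifier alternation straight when passing to complements, and to invoke the monotonicity of the two families (supersets of thick sets are thick, supersets of syndetic sets are syndetic) rather than the false assertion that subsets of thick sets are thick.
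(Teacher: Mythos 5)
Your proof is correct: the paper offers no written argument for this proposition (it is stated as ``an immediate consequence of the definitions''), and your complementation lemma --- that $S$ is affinely syndetic if and only if $R\setminus S$ is not affinely thick, obtained by negating the quantifiers in the definitions --- is exactly the intended argument. The deduction of both equivalences from that lemma, including the direct forward argument ($\theta_Fx\subset T$ and $\theta_F^{-1}S=R$ force a point of $S\cap T$), is sound.
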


Now we connect affine syndeticity and thickness in countable fields with upper and lower density with respect to double F\o lner sequences.
\begin{theorem}\label{thm_syndensity}Let $K$ be a countable field.
  A set $S\subset K$ is affinely syndetic if and only if for every double F\o lner sequence $(F_N)$ in $K$, we have
  $\bar d_{(F_N)}(S)>0$.
  A set $S\subset K$ is affinely thick if and only if there exists a double F\o lner sequence $(F_N)$ in $K$ such that
  $\underline d_{(F_N)}(T)=1$.
\end{theorem}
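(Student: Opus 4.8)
The plan is to establish all four implications in the statement, and to arrange them so that all but one follow by soft duality arguments, isolating a single construction as the genuine content. The backbone is Proposition~\ref{prop_syndthick} (syndetic/thick duality) together with the elementary density properties in Lemma~\ref{lemma_density}.

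First, the elementary half of the syndetic part. Suppose $S$ is affinely syndetic, say $\theta_F^{-1}S=K$ with $F=\{g_1,\dots,g_k\}\subset\AK$. Then $K=\bigcup_{i=1}^k\theta_{g_i}^{-1}S$, and for any double F\o lner sequence $(F_N)$ the subadditivity in Lemma~\ref{lemma_density}(\ref{lemma_density_upperunion}), together with the identity $\theta_{g_i}^{-1}S=\theta_{g_i^{-1}}S$ (valid since $g_i$ is invertible in a field) and the shift-invariance in Lemma~\ref{lemma_density}(\ref{lemma_density_shift}), give $1=\bar d_{(F_N)}(K)\le\sum_{i=1}^k\bar d_{(F_N)}(\theta_{g_i}^{-1}S)=k\,\bar d_{(F_N)}(S)$, so $\bar d_{(F_N)}(S)\ge 1/k>0$. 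Granting the key construction stated below, the three remaining implications are then immediate by duality. If $S$ has positive upper density along every double F\o lner sequence but were not affinely syndetic, Proposition~\ref{prop_syndthick} produces an affinely thick $T$ disjoint from $S$; the construction yields a double F\o lner sequence along which $\bar d_{(F_N)}(S)\le\bar d_{(F_N)}(K\setminus T)=1-\underline d_{(F_N)}(T)=0$ by Lemma~\ref{lemma_density}(\ref{lemma_density_sum}), a contradiction. Dually, if $\underline d_{(F_N)}(T)=1$ for some double F\o lner sequence but $T$ were not affinely thick, Proposition~\ref{prop_syndthick} produces an affinely syndetic $S$ disjoint from $T$, and the syndetic half just proved forces $\bar d_{(F_N)}(S)>0$ while $S\subset K\setminus T$ forces $\bar d_{(F_N)}(S)=0$ along that sequence. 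This disposes of the backward syndetic implication and of both thick implications.

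It remains to prove the crux: an affinely thick $T$ carries a double F\o lner sequence with $\underline d_{(F_N)}(T)=1$. I would reduce this, by a diagonal argument over an enumeration $K^*=\{u_1,u_2,\dots\}$, to the finitary claim that for every finite $\mathcal G\subset\AK$ and every $\eps>0$, $T$ contains a finite set $\Phi$ with $|\theta_g\Phi\,\triangle\,\Phi|<\eps|\Phi|$ for all $g\in\mathcal G$; taking $\mathcal G=\{A_{u_i},A_{-u_i},M_{u_i},M_{u_i^{-1}}:i\le N\}$ and $\eps=1/N$ produces sets $F_N\subset T$ forming a double F\o lner sequence, with $\underline d_{(F_N)}(T)=1$ automatic since $F_N\subset T$. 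To produce $\Phi$ I would start from a double F\o lner set $\Psi$ of high quality (these exist by \cite[Proposition~2.4]{Bergelson_Moreira15}, or equivalently by the $T=K$ instance of the present theorem) and use affine thickness to place an affine image inside $T$: for a fixed $w\in K^*$, applying thickness to the finite family $\{A_{wy}:y\in\Psi\}$ yields $v$ with $w\Psi+v=\theta_{A_vM_w}\Psi\subset T$. One then transports the invariance of $\Psi$ through $h=A_vM_w$ via the conjugation identities $h^{-1}A_uh=A_{w^{-1}u}$ and $h^{-1}M_uh=A_{w^{-1}v(u-1)}M_u$, which show that approximate invariance of $h\Psi$ under $A_{u_i}$ and $M_{u_i}$ follows from approximate invariance of $\Psi$ under the finitely many maps $A_{w^{-1}u_i}$ and $\theta_{A_{w^{-1}v(u_i-1)}M_{u_i}}$.

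The main obstacle is precisely this last transport. In an amenable group a thick set contains a right-translate of each F\o lner set, and right-translation preserves left-F\o lner-invariance because the two sides commute; here no such mechanism is available, since $K\cong\AK/S_M$ admits no nontrivial action commuting with the affine action (the normalizer of the point-stabilizer $S_M$ is $S_M$ itself). Consequently the placement map $h$ genuinely distorts the invariance, and the additive parameter $v$ returned by thickness is not a priori under our control, so the naive ``translate a F\o lner set into the thick set'' argument fails. Resolving this requires choosing $\Psi$ invariant under a family rich enough to absorb the conjugates $A_{w^{-1}u_i}$ and $\theta_{A_{w^{-1}v(u_i-1)}M_{u_i}}$, which forces a careful coordination between the depth of $\Psi$ in the double F\o lner sequence and the placement parameters, exploiting that affine thickness furnishes not merely a single valid placement $v$ but an entire affinely thick set of them. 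Making this coordination precise is where the real work of the theorem lies.
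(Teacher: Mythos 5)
Your first implication and both duality reductions are correct and coincide with the paper's argument. But the crux --- producing, inside an affinely thick set $T$, a double F\o lner sequence along which $T$ has lower density $1$ --- is exactly the step you leave unfinished, and it is a genuine gap rather than a technicality. The transport-by-conjugation scheme you outline does not close, for the reason you yourself identify: the placement parameter $v$ returned by thickness enters the conjugated maps $\theta_{A_{w^{-1}v(u-1)}M_u}$, so the family of affine maps under which the starting set $\Psi$ must be approximately invariant depends on $v$, which in turn depends on the choice of $\Psi$; this circularity is not broken by taking $\Psi$ ``deeper'' in a fixed double F\o lner sequence, and no amount of coordination along those lines is carried out (or sketched convincingly) in your proposal.

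The paper's proof sidesteps translation of subsets of $K$ altogether by working one level up, in the affine group itself. Since $K$ is a field, $\AK$ is a group, and it is amenable (a semidirect product of two abelian groups), so it possesses a left F\o lner sequence $(G_N)$ of finite subsets of $\AK$. For each $N$, affine thickness of $T$ gives infinitely many $x\in K$ with $\theta_{G_N}x\subset T$; since two distinct affine maps agree in at most one point of $K$, one may choose $x_N$ so that in addition $|F_N|=|G_N|$, where $F_N:=\theta_{G_N}x_N\subset T$. The key observation is that evaluation at $x_N$ intertwines left translation on $\AK$ with the affine action on $K$: for any fixed $g\in\AK$,
$$F_N\cap\theta_gF_N\ \supset\ \theta_{G_N\cap gG_N}x_N,$$
so $|F_N\cap\theta_gF_N|/|F_N|\geq|G_N\cap gG_N|/|G_N|\to1$, and $(F_N)$ is a double F\o lner sequence in $K$ with $\underline d_{(F_N)}(T)=1$. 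No conjugation identities and no control over placement parameters are needed: left-invariance upstairs pushes down directly. Note also that this construction does not presuppose the existence of double F\o lner sequences in $K$ --- it yields that existence as a byproduct, by applying it to the trivially thick set $T=K$ --- whereas your reduction must start from a double F\o lner set $\Psi$ of ``high quality'' and still cannot finish.
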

\begin{proof}
  Assume $S\subset K$ is affinely syndetic and let $F\subset\AK$ be a finite set such that $\theta_F^{-1}S=K$.
  Then for any double F\o lner sequence $(F_N)$, using parts (\ref{lemma_density_shift}) and (\ref{lemma_density_upperunion}) of Lemma \ref{lemma_density} we have
  $$1=\bar d_{(F_N)}(K)=\bar d_{(F_N)}\left(\bigcup_{g\in F}\theta_{g^{-1}}S\right)\leq\sum_{g\in F}\bar d_{(F_N)}(\theta_{g^{-1}}S)=|F|\bar d_{(F_N)}(S)$$
  and hence $\bar d_{(F_N)}(S)\geq1/|F|>0$.

  Now assume that $T\subset K$ is affinely thick and let $(G_N)$ be an arbitrary (left) F\o lner sequence in $\AK$.
  For each $N\in\N$ let $x_N\in K$ be such that $F_N:=\theta_{G_N}x_N\subset T$ and $|F_N|=|G_N|$.
  To see why this is possible, note that for any affine transformations $g_1,g_2\in\AK$ with $g_1\neq g_2$, there is at most one solution $x\in K$ to the equation $g_1(x)=g_2(x)$.
  Thus there are only finitely many $x\in K$ such that $g_1x=g_2x$ for some pair $g_1\neq g_2\in G_N$.
  On the other hand, since $T$ is affinely thick, there are infinitely many $x\in K$ such that $\theta_{G_N}x\subset T$ (and indeed an affinely thick set of such $x$).

  We now show that $(F_N)$ is a double F\o lner sequence in $K$.
  For any fixed $g\in\AK$ we have
  $$F_N\cap \theta_gF_N=\theta_{G_N}x_N\cap \theta_g(\theta_{G_N}x_N)\supset \theta_{G_N\cap gG_N}x_N$$
   and hence
  $$1\geq\limsup_{N\to\infty}\frac{|F_N\cap gF_N|}{|F_N|}\geq\liminf_{N\to\infty}\frac{|F_N\cap gF_N|}{|F_N|}\geq\lim_{N\to\infty}\frac{|G_N\cap gG_N|}{|G_N|}=1$$
  because $(G_N)$ is a left F\o lner sequence in $\AK$.
  This implies that $(F_N)$ is a double F\o lner sequence in $K$.
  Since for each $N\in\N$ we have $F_N\subset T$ we conclude that $\underline d_{(F_N)}(T)=1$.

  Now if $S$ is not syndetic then it follows from Proposition \ref{prop_syndthick} that $K\setminus S$ is thick. Therefore there exits a double F\o lner sequence $(F_N)$ such that $\underline d_{(F_N)}(K\setminus S)=1$. From part (\ref{lemma_density_sum}) of Lemma \ref{lemma_density} if follows that $\bar d_{(F_N)}(S)=0$.

  Finally, if $T$ is not thick, then $K\setminus T$ is syndetic and hence for every double F\o lner sequence $(F_N)$ we have $\bar d_{(F_N)}(K\setminus T)>0$. By part (\ref{lemma_density_sum}) of Lemma \ref{lemma_density} we have $\underline d_{(F_N)}(T)<1$ for every double F\o lner sequence in $K$.

\end{proof}
\begin{remark}\label{rmrk_syndetic}
In view of Theorem \ref{thm_syndensity}, it follows from (the proof of) Theorem 2.5 in \cite{Bergelson_Moreira15} that the sets of return times $R(B,\eps)$ defined in (\ref{eq_rbeps}) are affinely syndetic.
The main idea behind the proof of Theorem \ref{thm_Rintersection} is that the sets $R(B,\eps)$ are not only affinely syndetic, but actually $DC^*$.
\end{remark}

In every countable semigroup, any thick set is central.
The same phenomenon occurs in our situation:

\begin{proposition}\label{prop_thickcentral}
Assume $R$ be a \FIID.
Then every affinely thick set in $R$ is $DC$ (see Definition \ref{def_central}).
\end{proposition}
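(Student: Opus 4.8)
The plan is to produce directly an ultrafilter $p\in\D=\overline{\MI}\cap\mathcal{MMI}$ with $T\in p$; by definition this exhibits $T$ as a $DC$ set. The guiding principle is the classical fact that the closure of a thick set in a Stone-\v{C}ech compactification contains a minimal left ideal, hence a minimal idempotent. Here there are two non-commuting operations, and the difficulty I expect to be the main obstacle is to arrange that the \emph{multiplicative} minimal idempotent we extract simultaneously lies in $\overline{\MI}$ and contains $T$. The naive idea of intersecting the left ideal $\{q:T\in v+uq\ \forall u,v\}$ with the left ideal $\overline{\MI}$ fails, since two left ideals of $(\beta\Rs,\cdot)$ can be disjoint.

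First I would translate affine thickness into an ultrafilter statement. For $u\in\Rs$ and $v\in R$ write $u^{-1}(T-v)=\{x\in R:ux+v\in T\}$; since the basic clopen sets satisfy $\overline{A}\cap\overline{B}=\overline{A\cap B}$, affine thickness of $T$ says exactly that the family $\{u^{-1}(T-v)\}$ has the finite intersection property. By compactness the closed set $D:=\{p\in\beta R:T\in v+up\ \text{for all }u\in\Rs,\,v\in R\}=\bigcap_{u,v}(A_vM_u)^{-1}\overline{T}$ is then nonempty, and (unless $T=R$, in which case the statement reduces to $\D\neq\emptyset$ and is handled by the same argument) it contains no principal ultrafilter, so I may fix a non-principal $p\in D$.

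Next comes the key step: passing from $p$ to an additive minimal idempotent all of whose multiplicative dilates have their additive orbits inside $\overline{T}$. Writing $X_q=\overline{\{w+q:w\in R\}}$ for the additive orbit closure, the condition $p\in D$ gives $X_{up}\subseteq\overline{T}$ for every $u\in\Rs$. A minimal $S_A$-subsystem $X'\subseteq X_p$ (being a compact additive semigroup, it carries an additive idempotent by Lemma \ref{lemma_ellis}) yields a non-principal $q\in\MI$ with $q\in X_p$. Using \eqref{eq_distr}, Proposition \ref{prop_ultradistr} and continuity of $M_u$ one checks the dilation inclusion $uX_p\subseteq X_{up}$, whence $uq\in X_{up}$ and, by $S_A$-invariance of $X_{up}$, $X_{uq}\subseteq X_{up}\subseteq\overline{T}$ for all $u$. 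Thus $\mathcal M:=\{q\in\MI:X_{uq}\subseteq\overline{T}\ \forall u\in\Rs\}$ is nonempty. By construction $\mathcal M\subseteq\overline{T}$ and $\mathcal M\subseteq\MI\subseteq\overline{\MI}$, and, crucially, $\mathcal M$ is invariant under each $M_u$ with $u\in\Rs$, since $X_{u'(uq)}=X_{(u'u)q}\subseteq\overline{T}$. Here Lemma \ref{lemma_upinmi} (the only place the \FIID{} hypothesis enters) is what guarantees $uq\in\MI$.

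Finally I would run the multiplicative minimal-idempotent argument inside $Y:=\overline{\mathcal M}$. Since $\mathcal M$ is $M_u$-invariant and closed, \eqref{eq_ultralimit} (writing $rq=\pl_u uq$) shows that $Y$ is a left ideal of $(\beta\Rs,\cdot)$ contained in $\overline{\MI}\cap\overline{T}$. A minimal left ideal $L\subseteq Y$ carries a multiplicative idempotent $p^\ast$ by Lemma \ref{lemma_ellis}; then $p^\ast\in\mathcal{MMI}$ (it is an idempotent in a minimal left ideal, necessarily non-principal), $p^\ast\in Y\subseteq\overline{\MI}$, and $p^\ast\in Y\subseteq\overline{T}$. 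Hence $p^\ast\in\D$ with $T\in p^\ast$, so $T$ is $DC$. The only genuinely delicate points are the dilation inclusion $uX_p\subseteq X_{up}$, which encodes the distributive law at the level of orbit closures, and the verification that $Y$ is a multiplicative left ideal; the remainder is routine bookkeeping with \eqref{eq_ultralimit}.
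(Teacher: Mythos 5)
Your proposal is correct and is essentially the paper's own argument in different packaging: your $D$ is exactly the paper's set $\T=\bigcap_{g\in\AR}\overline{\theta_{g^{-1}}T}$, your $\mathcal M$ is exactly $\MI\cap\T$ (so $Y=\overline{\mathcal M}$ sits inside the paper's $\T_1=\overline{\MI}\cap\T$), and both proofs run the same two-step scheme — an additive Ellis argument inside $\T$, then Lemma \ref{lemma_upinmi} to make the resulting subset of $\overline{\MI}$ invariant under $S_M$, then a multiplicative Ellis argument producing an idempotent in $\D$ that contains $T$. The only cosmetic differences are that you verify invariance via orbit closures and the dilation inclusion $uX_p\subseteq X_{up}$ where the paper checks directly that $\T$ is a left ideal for both ultrafilter operations, and that your appeal to Lemma \ref{lemma_upinmi} for ``$uq\in\MI$'' uses what its proof establishes ($q\in\MI\Rightarrow uq\in\MI$) rather than its literal statement ($uq\in\overline{\MI}$) — a gap you could close trivially by defining $\mathcal M$ with $\overline{\MI}$ in place of $\MI$.
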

\begin{proof}
Let $T\subset R$ be an affinely thick set.
For $g\in\AR$ define $\overline{\theta_{g^{-1}}T}\subset\beta R$ by equations (\ref{eq_deftau}) and (\ref{eq_closure}).
Note that, for any finite set $F\subset\AR$:
$$\bigcap_{g\in F}\overline{\theta_{g^{-1}}T}=\overline{\bigcap_{g\in F}\theta_{g^{-1}}T}=\overline{\{x\in R:\theta_Fx\subset T\}}$$
Since $T$ is affinely thick, the family of compact sets $\left\{\overline{\theta_{g^{-1}}T}:g\in\AR\right\}$ has the finite intersection property, and hence the intersection $\T:=\bigcap_{g\in\AR}\overline{\theta_{g^{-1}}T}$ is a non-empty compact subset of $\beta R$. We have the following description of $\T$:
$$p\in\T\iff(\forall g\in\AR)p\in\overline{\theta_{g^{-1}}T}\iff(\forall g\in\AR)\theta_{g^{-1}}T\in p$$
If $p,q\in\T$, we claim that both $p+q\in\T$ and $pq\in\T$.
Indeed, for all $g\in\AR$ and $u\in R$ we have $A_u^{-1}\theta_{g^{-1}}T=(\theta_gA_u)^{-1}T$. Therefore we have:
$$\theta_{g^{-1}}T\in p+q\iff\{u\in R:A_u^{-1}\theta_{g^{-1}}T\in q\}\in p\iff \{u\in R:(\theta_gA_u)^{-1}T\in q\}\in p$$
Since $q\in\T$ the set $\{u\in R:(\theta_gA_u)^{-1}T\in q\}=R\in p$, so we conclude that $p+q\in\T$.
The same argument with obvious modifications implies that $pq\in\T$ proving the claim.

We now have that $(\T,S_A)$ is a topological dynamical system.
Hence by Zorn's lemma there exists a minimal subsystem.
It follows from (\ref{eq_ultralimit}) that each minimal subsystem is actually an (additive) left ideal in $\beta R$, and hence, in view of Lemma \ref{lemma_ellis}, there exist (additive) minimal idempotents in $\T$.
Therefore the intersection $\T_1:=\overline{\MI}\cap\T$ is a non-empty compact subset of $\T$.

If $u\in\Rs$ and $p\in\T_1$, it follows from Lemma \ref{lemma_upinmi} that $up\in\overline{\MI}$, and thus $up\in\T_1$.
This means that $(\T_1,S_M)$ is a topological dynamical system and hence by Zorn's lemma it has minimal subsystems.
By Ellis theorem each minimal system (=left ideal) contains some multiplicative idempotent.
Let $p$ be a multiplicative minimal idempotent in $\T_1$.
Since $\T_1\subset\T$ we conclude that $T\in p$.
Since $\T_1\subset\overline{\MI}$ we conclude that $p\in\overline{\MI}$, and hence $p\in\D$.
\end{proof}

\begin{remark}\label{rmrk_DC*syndetic}
An immediate consequence of Propositions \ref{prop_thickcentral} and \ref{prop_syndthick} is that every $DC^*$ set is affinely syndetic.
\end{remark}

\section{Finite intersection property of sets of return times}\label{sec_proofmain}

In this section we study isometric anti-representations\footnote{We deal here with anti-representations instead of (a priori more natural) representations because a measure preserving action $(T_g)_{g\in G}$ of a non-commutative semigroup $G$ induces a natural anti-representation of $G$ by isometries on the corresponding $L^2$ space. Of course, the results obtained in this section hold true for isometric representations as well.} $(U_g)_{g\in\AR}$ of the affine semigroup $\AR$ of a ring $R$ on a Hilbert space $\mathcal{H}$ (this means that $\langle U_g\phi,U_g\psi\rangle=\langle\phi,\psi\rangle$ and $U_g(U_h\phi)=U_{hg}\phi$ for any $g,h\in\AR$ and $\phi,\psi\in \mathcal{H}$).

Recall that if $G$ is a semigroup and $(U_g)_{g\in G}$ is an isometric (anti-)representation of $G$ on a Hilbert space $\mathcal{H}$, then a vector $\phi\in \mathcal{H}$ is called \emph{compact} if the orbit $\{U_g\phi:g\in G\}\subset \mathcal{H}$ is pre-compact in the norm topology.
It is easy to see that the set of compact vectors is a closed subspace.

When $G$ is the additive sub-semigroup $S_A$ of the affine semigroup $\AR$, we denote the orthogonal projection onto the space of compact vectors by $V_A$ and when $G$ is the multiplicative sub-semigroup $S_M$ of the affine semigroup $\AR$, we denote the orthogonal projection onto the space of compact vectors by $V_M$.
Our main ergodic-theoretic result is the following analogue of Theorem \ref{thm_introergodic}, with Ces\`aro averages (which are unavailable in our current situation) replaced with limits along ultrafilters $p\in\D=\overline{\MI}\cap\mathcal{MMI}$.
\begin{theorem}\label{thm_main}Let $R$ be an \FIID{} (see Definition \ref{def_fiid}), let $\mathcal{H}$ be a Hilbert space and let $(U_g)_{g\in\AR}$ be an isometric anti-representation of $\AR$ on $\mathcal{H}$.
Then, for any $\phi,\psi\in \mathcal{H}$ and $p\in\D$ (see Definition \ref{def_central}) we have
$$\pl_u \langle A_u\phi,M_u\psi\rangle=\langle V_A\phi,V_M\psi\rangle.$$
\end{theorem}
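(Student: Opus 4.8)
The plan is to compute the sesquilinear quantity $\pl_u\langle A_u\phi, M_u\psi\rangle$ by decomposing both arguments according to the orthogonal splittings $\phi = V_A\phi + \phi'$ and $\psi = V_M\psi + \psi'$, where $\phi'$ is orthogonal to the space of $S_A$-compact vectors and $\psi'$ is orthogonal to the space of $S_M$-compact vectors. By sesquilinearity this breaks $\langle A_u\phi, M_u\psi\rangle$ into four summands, and the goal is to show that the ``compact--compact'' summand has $p$-limit $\langle V_A\phi, V_M\psi\rangle$ while the other three have $p$-limit $0$.

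First I would establish two one-sided compactness lemmas. \emph{(Additive.)} Since $S_A\cong(R,+)$ is a group, each $A_u$ is unitary; on an eigenvector $A_v\phi=\chi(v)\phi$ one has $\ql_v\chi(v)=1$ for every additive idempotent $q$, because the continuous extension $\tilde\chi:\beta R\to S^1$ is a homomorphism and hence carries idempotents to the unique idempotent $1$. Thus $A_u$ fixes, in the $q$-limit, every $S_A$-compact vector (these being the closed span of eigenvectors). As $q\mapsto\ql_v\langle A_v\phi,\eta\rangle$ is continuous on $\beta R$ and $p\in\overline{\MI}$, this persists to $p$: $\pl_u A_u\phi_c=\phi_c$ in norm for every $S_A$-compact $\phi_c$. \emph{(Multiplicative.)} On the space of $S_M$-compact vectors the precompact family $\{M_u\}$ has strong closure a compact abelian semigroup of isometries whose minimal idempotent, being an isometric idempotent, must equal the identity; hence this closure is a compact group of unitaries, the space decomposes into $M$-eigenvectors, and the same character computation (now using $pp=p$) gives $\pl_u M_u\psi_c=\psi_c$ in norm for every $S_M$-compact $\psi_c$. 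Combining the two, the compact--compact summand $\langle A_u V_A\phi, M_u V_M\psi\rangle$ converges to $\langle V_A\phi, V_M\psi\rangle$.

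Next, the two ``mixed'' summands are handled by weak mixing. By a Wiener-type argument, if $\phi'$ is orthogonal to the $S_A$-compact vectors then its spectral measure is continuous, so $\{v:|\langle A_v\phi',\eta\rangle|\ge\epsilon\}$ has upper Banach density zero; since every member of an additive minimal idempotent is piecewise syndetic, hence of positive upper Banach density, this set lies in no additive minimal idempotent, and by the same continuity-to-the-closure argument $\text{w-}\pl_u A_u\phi'=0$. The multiplicative analogue (continuous spectral measure on the completion of $(R^*,\cdot)$, a multiplicative Wiener lemma, and positive upper Banach density of multiplicatively piecewise syndetic sets) gives $\text{w-}\pl_u M_u\psi'=0$. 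Since $M_u V_M\psi\to V_M\psi$ and $A_u V_A\phi\to V_A\phi$ in norm, the summands $\langle A_u\phi', M_u V_M\psi\rangle$ and $\langle A_u V_A\phi, M_u\psi'\rangle$ reduce to $\langle A_u\phi', V_M\psi\rangle$ and $\langle V_A\phi, M_u\psi'\rangle$, both of which vanish in the $p$-limit by these two weak-mixing statements.

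The remaining and genuinely hardest summand is the \emph{doubly weakly mixing} term $\pl_u\langle A_u\phi', M_u\psi'\rangle$, with $\phi'$ orthogonal to the $S_A$-compact vectors and $\psi'$ orthogonal to the $S_M$-compact vectors. Here neither factor converges in norm and the pairing couples a moving additive orbit with a moving multiplicative orbit, so plain weak mixing is insufficient; this is the step I expect to be the main obstacle. I would dispatch it by a van der Corput estimate carried out along the multiplicative minimal idempotent $p$: writing $\langle A_u\phi', M_u\psi'\rangle=\langle\phi', A_{-u}M_u\psi'\rangle$ and repeatedly invoking the intertwining identity $A_vM_u=M_uA_{uv}$, which follows (via the anti-representation) from the distributive law \eqref{eq_distr}, to push additive factors through multiplicative ones, one reduces the $p$-limit of the relevant correlations to expressions controlled by the weak mixing of $\psi'$ and $\phi'$, forcing the limit to be $0$. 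Assembling the four summands then yields $\pl_u\langle A_u\phi, M_u\psi\rangle=\langle V_A\phi, V_M\psi\rangle$.
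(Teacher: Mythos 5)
Your treatment of the first three summands is essentially sound, and the compact part is handled by a genuinely different route than the paper's: where the paper proves $\pl_u\|A_uV_A\phi-V_A\phi\|=0$ for $p\in\D$ by exploiting the \emph{multiplicative} minimality of $p$ (members of $p$ contain large finite-sums sets, Lemmas \ref{lemma_centralIP0} and \ref{lemma_compactreturn}, plus a pigeonhole on the precompact orbit), you get it from the \emph{additive} side alone: characters kill idempotents, and the map $q\mapsto\ql_u\|A_u\phi_c-\phi_c\|$ is continuous on $\beta R$, so vanishing on $\MI$ persists to $\overline{\MI}$. That argument is correct. Your two mixed terms amount to re-proving Lemma \ref{lemma_kronecker} (which the paper imports from \cite{Bergelson03}); your Wiener/Banach-density route is believable, though for the multiplicative factor you would need to check the weak-mixing-versus-density dictionary for \emph{isometric} (not unitary) representations of $(\Rs,\cdot)$.

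The genuine gap is exactly where you predicted it: the doubly weakly mixing term, and the van der Corput scheme you propose does not close it. Applying van der Corput to $b_u=A_{-u}M_u\psi'$ requires $\pl_u\langle b_{ub},b_u\rangle=0$ for cofinitely many $b$. Computing in the field case (all operators unitary), $\langle b_{ub},b_u\rangle=\langle A_{u(1-b)}M_{ub}\psi',M_u\psi'\rangle=\langle A_{u^2(1-b)}M_b\psi',\psi'\rangle$: each application of the intertwining identity \eqref{eq_distributivityinverted} multiplies the additive parameter by the multiplier it crosses, so the correlations involve additive shifts that are \emph{quadratic} in $u$, applied to $\psi'$. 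Neither hypothesis controls this expression: multiplicative weak mixing of $\psi'$ says nothing about its additive orbit, and all the ergodic input available (Lemma \ref{lemma_kronecker}, Lemma \ref{lemma_kroneckerideal}) concerns linear shifts $A_{ku}$, not $A_{ku^2}$. Concretely, if $\psi'$ is additively invariant --- which is compatible with $V_M\psi'=0$ --- then $\langle b_{ub},b_u\rangle\equiv\langle M_b\psi',\psi'\rangle$ for all $u$, which has no reason to vanish, so the van der Corput hypothesis simply fails for your choice of $b_u$. The paper's resolution is to renormalize on the other side: apply van der Corput to $a_u=M_u^TA_u\phi^\perp$ (in the field case $M_{1/u}A_u\phi^\perp$), i.e., put the multiplicative correction on the \emph{additively} weakly mixing vector. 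Then the correlations come out linear in $u$, namely $\langle a_{ub},a_u\rangle=\langle A_{u(b-1/b)}\phi^\perp,M_b\phi^\perp\rangle$, and vanish by additive weak mixing alone. This kills $\pl_u\langle A_u\phi^\perp,M_u\psi\rangle$ for \emph{arbitrary} $\psi$, which also shows that your four-term decomposition is unnecessary: decomposing only $\phi$ suffices.

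A secondary omission: away from the field case your sketch ignores the difficulties that occupy most of the paper's proof. For a general \FIID{}, $M_{1/u}$ does not exist and $M_uM_u^T$ is not the identity, so even the corrected van der Corput argument needs the adjoint intertwining relation (Lemma \ref{lemma_integers1}), the fact that $\pl_u M_uM_u^T$ is an orthogonal projection in the strong topology (Lemma \ref{lemma_integers2}), and limits along $A_{ku/b}$ (Lemma \ref{lemma_kroneckerideal}) --- all of which rest on the \FIID{} hypothesis through the membership of finite-index ideals in $p$ (Lemma \ref{lemma_finiteindexip*}). Any complete proof for \FIID{} rings, by your route or the paper's, must supply these ingredients.
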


In this section we will always work under the assumptions of Theorem \ref{thm_main}.
\subsection{Projection onto the space of compact vectors}
We have the following result:
\begin{lemma}\label{lemma_kronecker}
If $p\in\D$ (see Definition \ref{def_ami}) and $\phi\in \mathcal{H}$ then
$$V_M\phi=\pl_uM_u\phi\qquad\text{in the topology of weak convergence}$$
If $p\in\overline{\MI}$ and $k\in\Rs$ then
$$V_A\phi=\pl_uA_{ku}\phi\qquad\text{in the topology of weak convergence.}$$
\end{lemma}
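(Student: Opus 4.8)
The plan is to prove both statements by exhibiting the relevant $p$-limit as an \emph{orthogonal projection} and then identifying its range with the space of compact vectors, using the de Leeuw--Glicksberg splitting together with the minimality built into $\D$ and $\overline{\MI}$.

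First I would treat the multiplicative statement. Fix $p\in\D$, so that in particular $p\in\mathcal{MMI}$ is a multiplicative minimal idempotent. Since the closed unit ball of $\mathcal{H}$ is weakly compact, the weak limit $P\phi:=\pl_u M_u\phi$ exists for every $\phi$ and defines a bounded operator $P$ with $\|P\|\le1$. Because each $U_g$ is weak--weak continuous and, as $M_uM_v=M_{uv}=M_vM_u$, the anti-representation property gives $U_{M_u}U_{M_v}=U_{M_vM_u}=U_{M_{uv}}$, the iterated $p$-limit collapses under the multiplicative idempotency $pp=p$ to yield $P^2=P$. A norm-one idempotent on a Hilbert space is automatically an orthogonal projection, so $P=P^\ast=P^2$. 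It then remains to identify the range of $P$ with the space of $S_M$-compact vectors.

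For the range I would argue two inclusions. If $\phi$ is $S_M$-compact, its orbit $\{M_u\phi\}$ is norm-precompact, so along $p$ the weak limit is a norm limit and $\|P\phi\|=\lim\|M_u\phi\|=\|\phi\|$; since $P$ is an orthogonal projection, this forces $P\phi=\phi$, so the compact vectors lie in the range of $P$ (note this inclusion uses only idempotency). The reverse inclusion --- that $P$ \emph{annihilates} the flight vectors $\phi$, those with $0\in\overline{\{M_u\phi\}}^{\,w}$ --- is where minimality is essential and is \textbf{the main obstacle}. Here I would invoke the Jacobs--de Leeuw--Glicksberg theory: the weak-operator closure of $\{U_{M_u}:u\in\Rs\}$ is a compact right-topological semigroup onto which $(\beta\Rs,\cdot)$ maps continuously via $q\mapsto\text{w-}\lim_{u\to q}U_{M_u}$; since $p$ is a \emph{minimal} idempotent, $P$ lands in the minimal two-sided ideal of this enveloping semigroup, and every idempotent in that minimal ideal projects onto the reversible vectors, killing every flight vector. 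Combining the two inclusions gives $\mathrm{range}(P)=\mathrm{range}(V_M)$, hence $P=V_M$, which is the first assertion.

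For the additive statement I would reduce to the case of an additive minimal idempotent and then remove the scalar $k$. Applying Lemma \ref{lemma_kplim} with $X$ a weakly compact ball and $x_u=A_u\phi$ gives $\pl_u A_{ku}\phi=\lim_{u\to kp}A_u\phi$, while Lemma \ref{lemma_upinmi} shows $kp\in\overline{\MI}$. Now consider $\Psi(q):=\text{w-}\lim_{u\to q}A_u$ for $q\in\beta R$; for fixed $\phi,\psi$ the scalar $q\mapsto\langle\Psi(q)\phi,\psi\rangle$ is the continuous extension to $\beta R$ of the bounded function $u\mapsto\langle A_u\phi,\psi\rangle$, so $\Psi$ is continuous into the compact Hausdorff ball equipped with the weak operator topology. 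For $q\in\MI$ the argument of the previous two paragraphs, applied verbatim to the additive anti-representation $(U_{A_u})_{u\in R}$ and the additive minimal idempotent $q$, gives $\Psi(q)=V_A$. Since $\{q:\Psi(q)=V_A\}=\bigcap_{\phi,\psi}\{q:\langle\Psi(q)\phi,\psi\rangle=\langle V_A\phi,\psi\rangle\}$ is closed and contains the dense set $\MI$, it contains all of $\overline{\MI}$; in particular $\Psi(kp)=V_A$, whence $\pl_u A_{ku}\phi=\Psi(kp)\phi=V_A\phi$, as claimed.
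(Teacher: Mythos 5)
Your proposal is correct, and its overall architecture matches the paper's; the genuine difference is localized in the first half. For the core fact --- that the weak limit of $U_u\phi$ along a minimal idempotent is the orthogonal projection onto the compact vectors --- the paper simply cites Corollary 4.6 of \cite{Bergelson03} (with a footnote that the group-case proofs there extend to semigroups), both for $p\in\mathcal{MMI}$ and for additive minimal idempotents. You instead re-derive this fact from scratch via the enveloping-semigroup/Jacobs--de Leeuw--Glicksberg route: idempotency $pp=p$ gives $P^2=P$ through the iterated-limit formula (valid here because $S_M$ is commutative and each $U_{M_u}$ is weak--weak continuous), contractivity upgrades the idempotent to an orthogonal projection, norm-precompact orbits are fixed, and minimality pushes $P$ into the kernel of the enveloping semigroup. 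This buys self-containedness at the price of leaning on classical JdLG theory; the paper's citation is shorter but opaque. Two points need care if you write your sketch out in full: (i) your two inclusions (compact vectors fixed, flight vectors killed) yield $\mathrm{range}(P)=\mathrm{range}(V_M)$ only via the actual JdLG splitting, i.e. the statement that every vector orthogonal to the compact vectors is a flight vector; then $\ker P\supseteq\mathcal{H}_c^{\perp}$ together with orthogonality of $P$ gives $\mathrm{range}(P)\subseteq\mathcal{H}_c$. This holds in the present setting because the semigroup is abelian and acts by isometries, so the kernel of the enveloping semigroup is a group whose unique idempotent is the JdLG projection; but since the $M_u$ need not be invertible, identifying ``reversible'' with ``compact'' vectors is a real (if standard) step that should be acknowledged. (ii) The map $q\mapsto\text{w-}\lim_{u\to q}U_{M_u}$ is a homomorphism of $(\beta\Rs,\cdot)$ into the enveloping semigroup only thanks to commutativity of $S_M$, so that should be said explicitly when transporting minimality. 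Your second half --- Lemma \ref{lemma_kplim} to rewrite $\pl_u A_{ku}\phi$ as the limit along $kp$, Lemma \ref{lemma_upinmi} to keep $kp$ in $\overline{\MI}$, the core fact at points of $\MI$, and the closed-set/continuity argument to pass to all of $\overline{\MI}$ --- is exactly the paper's proof of the additive statement.
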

\begin{proof}
Since $p\in\mathcal{MMI}$, the first equality follows\footnote{In \cite{Bergelson03} the results are stated and proved for groups only, but it is easy to check that the proofs work for discrete semigroups as well (as is observed in the first paragraph after the remark following Theorem 4.1 in \cite{Bergelson03}).} from Corollary 4.6 on \cite{Bergelson03}.
By the same corollary we have that $\displaystyle V_A\phi=\ql_uA_u\phi$ for every additive minimal idempotent $q$.

It follows from Lemma \ref{lemma_kplim} that $\displaystyle\pl_uA_{ku}\phi=k\pl_uA_u\phi$.
In view of Lemma \ref{lemma_upinmi} we have that $kp\in\overline{\MI}$. Since the map $\displaystyle q\mapsto\ql_uA_u\phi$ is continuous we conclude that
$$\pl_uA_{ku}\phi=k\pl_uA_u\phi=V_A\phi$$
\end{proof}

\begin{lemma}\label{lemma_Vcomute}
For every $\phi\in \mathcal{H}$ we have $V_AV_M\phi=V_MV_A\phi$.
\end{lemma}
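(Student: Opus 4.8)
The plan is to reduce the commutativity of the two projections to a single invariance statement, and then to verify that invariance using the operator form of the distributive law together with the weak description of $V_M$ supplied by Lemma \ref{lemma_kronecker}. Write $\mathcal{M}_A$ for the range of $V_A$, i.e. the closed subspace of $S_A$-compact vectors. First I would record the elementary fact that two orthogonal projections commute as soon as the range of one is invariant under the other: if $V_M(\mathcal{M}_A)\subseteq\mathcal{M}_A$, then since $V_M$ is self-adjoint the orthogonal complement $\mathcal{M}_A^\perp$ is automatically $V_M$-invariant too (for $x\in\mathcal{M}_A^\perp$ and $y\in\mathcal{M}_A$ one has $\langle V_Mx,y\rangle=\langle x,V_My\rangle=0$). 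Splitting an arbitrary $\phi=V_A\phi+(I-V_A)\phi$ along $\mathcal{M}_A\oplus\mathcal{M}_A^\perp$ and applying $V_A$ to $V_M\phi$ then yields $V_AV_M\phi=V_MV_A\phi$. Thus the whole lemma reduces to showing that $V_M$ preserves $S_A$-compactness.

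Next I would show that $M_u\phi$ is $S_A$-compact whenever $\phi$ is and $u\in\Rs$. Here the key tool is the operator form of the distributive law \eqref{eq_distr}: applying the anti-representation (for which $U_{g}U_{h}=U_{hg}$) to $M_uA_v=A_{uv}M_u$ gives the operator identity $A_vM_u=M_uA_{uv}$ on $\mathcal{H}$. Consequently
$$\{A_v(M_u\phi):v\in R\}=\{M_uA_{uv}\phi:v\in R\}=M_u\big(\{A_w\phi:w\in uR\}\big),$$
which is the image under the isometry $M_u$ of a subset of the precompact orbit $\{A_w\phi:w\in R\}$. Since a subset of a precompact set is precompact and the continuous map $M_u$ carries precompact sets to precompact sets, the orbit $\{A_v(M_u\phi):v\in R\}$ is precompact, i.e. $M_u\phi\in\mathcal{M}_A$.

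Finally I would pass to the limit. Fixing any $p\in\D$ (such $p$ exist: the whole ring $R$ is trivially affinely thick, hence $DC$ by Proposition \ref{prop_thickcentral}, so $\D\neq\emptyset$), Lemma \ref{lemma_kronecker} gives $V_M\phi=\pl_uM_u\phi$ in the weak topology. For $\phi\in\mathcal{M}_A$ each term $M_u\phi$ lies in $\mathcal{M}_A$ by the previous step, and since $\mathcal{M}_A$ is a norm-closed subspace it is weakly closed; therefore its weak limit $V_M\phi$ again lies in $\mathcal{M}_A$. This establishes $V_M(\mathcal{M}_A)\subseteq\mathcal{M}_A$ and, by the reduction of the first paragraph, completes the proof. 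The only point requiring genuine care — and the main obstacle — is the interplay between the multiplicative and additive structures: the distributive relation expresses the $S_A$-orbit of $M_u\phi$ as the $M_u$-image of the \emph{sub}-orbit $\{A_w\phi:w\in uR\}$ rather than the full orbit, so the argument hinges precisely on the observation that a subset of a precompact set is again precompact.
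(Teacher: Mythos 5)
Your proof is correct, but it takes a genuinely different route from the paper's. The paper argues at the operator level: combining both characterizations in Lemma \ref{lemma_kronecker} (namely $V_A\phi=\pl_uA_u\phi$ and $V_A\phi=\pl_uA_{ku}\phi$, the second of which rests on Lemmas \ref{lemma_kplim} and \ref{lemma_upinmi}) with the distributive law, it first establishes the intertwining relation $M_kV_A\phi=V_AM_k\phi$ for every $k\in\Rs$, and then concludes via $V_MV_A\phi=\pl_kM_kV_A\phi=\pl_kV_AM_k\phi=V_A\big(\pl_kM_k\phi\big)=V_AV_M\phi$, using the weak continuity of the bounded operator $V_A$. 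You instead prove only the range-invariance $V_M(\mathcal{M}_A)\subseteq\mathcal{M}_A$, which suffices by the standard criterion for commuting orthogonal projections, and your key step --- that each isometry $M_u$ preserves $S_A$-compactness because $\{A_vM_u\phi:v\in R\}=M_u\{A_w\phi:w\in uR\}$ is the isometric image of a subset of a precompact orbit --- is a soft compactness argument using no ultrafilters at all; Lemma \ref{lemma_kronecker} is then invoked only once, in its first form, to realize $V_M\phi$ as a weak limit of the vectors $M_u\phi$, which stays in the weakly closed subspace $\mathcal{M}_A$. What each approach buys: yours is more self-contained and conceptually transparent --- it bypasses the $A_{ku}$-form of Lemma \ref{lemma_kronecker} and hence (within this lemma) the finite-index LID machinery behind Lemma \ref{lemma_upinmi}, isolating the distributive law as the real mechanism; the paper's computation is shorter and yields the pointwise intertwining $M_kV_A=V_AM_k$, which for the non-unitary isometries $M_k$ is strictly stronger information than invariance of the range of $V_A$. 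Your appeal to Proposition \ref{prop_thickcentral} to guarantee $\D\neq\emptyset$ is also correct and is a point the paper leaves implicit.
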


\begin{proof}
Let $p\in\D$.
For each $k\in\Rs$, it follows from Lemma \ref{lemma_kronecker} that
$$M_kV_A\phi=M_k\left(\pl_uA_u\phi\right)=\pl_uM_kA_u\phi=\pl_uA_{ku}M_k\phi=V_AM_k\phi$$
Therefore
$$V_MV_Af=\pl_kM_kV_A\phi=\pl_kV_AM_k\phi=V_A\left(\pl_kM_k\phi\right)=V_AV_M\phi$$
\end{proof}

In view of Lemma \ref{lemma_Vcomute}, the operator $V:=V_AV_M$ is an orthogonal projection.
This gives the following simple corollary of Lemma \ref{lemma_Vcomute} which will be needed in the proof of Theorem \ref{thm_mcentral} below.
\begin{corollary}\label{lemma_positive}
Let $\phi,\psi\in \mathcal{H}$ and assume that $U_g\psi=\psi$ for every $g\in\AR$.
Then
$$\|\psi\|^2\cdot\langle V_A\phi,V_M\phi\rangle\geq\left|\langle\phi,\psi\rangle\right|^2$$
\end{corollary}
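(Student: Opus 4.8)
The plan is to reduce the whole inequality to the single orthogonal projection $V:=V_AV_M$ and then apply Cauchy--Schwarz. First I would observe that, since $U_g\psi=\psi$ for every $g\in\AR$, the orbit of $\psi$ under either sub-semigroup $S_A$ or $S_M$ is the singleton $\{\psi\}$, which is trivially pre-compact in norm. Hence $\psi$ is both an $S_A$-compact and an $S_M$-compact vector, so $V_A\psi=\psi$ and $V_M\psi=\psi$, and therefore $V\psi=V_AV_M\psi=\psi$.

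Next I would rewrite the left-hand side in terms of $V$. Using that $V_A$ is self-adjoint (being an orthogonal projection) together with the identity $V_AV_M=V$ furnished by Lemma \ref{lemma_Vcomute}, one gets $\langle V_A\phi,V_M\phi\rangle=\langle\phi,V_AV_M\phi\rangle=\langle\phi,V\phi\rangle$. Since $V$ is itself an orthogonal projection, i.e. $V=V^*=V^2$, this equals $\langle V\phi,V\phi\rangle=\|V\phi\|^2$; in particular it is a nonnegative real number, which is what makes the statement of the corollary meaningful.

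For the right-hand side I would use the invariance $V\psi=\psi$ established in the first step, together with the self-adjointness of $V$, to move the projection onto $\phi$: $\langle\phi,\psi\rangle=\langle\phi,V\psi\rangle=\langle V\phi,\psi\rangle$. Applying Cauchy--Schwarz then gives $|\langle\phi,\psi\rangle|^2=|\langle V\phi,\psi\rangle|^2\le\|V\phi\|^2\,\|\psi\|^2$, and substituting $\|V\phi\|^2=\langle V_A\phi,V_M\phi\rangle$ yields exactly the claimed inequality.

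I do not expect any real obstacle here: the substantive content is entirely packaged in Lemma \ref{lemma_Vcomute}, namely that $V_A$ and $V_M$ commute so that $V$ is a genuine orthogonal projection. Once that commutation is in hand, the corollary is a two-line Cauchy--Schwarz argument. The only point that requires a little care is the placement of the projection---verifying that $\langle V_A\phi,V_M\phi\rangle$ really equals $\|V\phi\|^2$ (and in particular is real and nonnegative) rather than some bilinear quantity that could behave otherwise---and this is handled precisely by the self-adjointness and idempotency of $V$. Notably, Theorem \ref{thm_main} itself is not needed for this corollary; only the projection structure coming from Lemma \ref{lemma_Vcomute} and the invariance of $\psi$ are used.
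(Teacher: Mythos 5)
Your proof is correct and follows essentially the same route as the paper's: both reduce $\langle V_A\phi,V_M\phi\rangle$ to $\|V\phi\|^2$ for the orthogonal projection $V=V_AV_M$ furnished by Lemma \ref{lemma_Vcomute}, then apply Cauchy--Schwarz and move $V$ onto $\psi$ via self-adjointness and $V\psi=\psi$. The only difference is that you spell out why $V\psi=\psi$ (invariance of $\psi$ makes it a compact vector for both sub-semigroups), a step the paper leaves implicit.
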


\begin{proof}
We have
\begin{eqnarray*}
\|\psi\|^2\cdot\langle V_A\phi,V_M\phi\rangle&=&\|\psi\|^2\cdot\langle V\phi,\phi\rangle= \|\psi\|^2\cdot\|V\phi\|^2\\&\geq&\big|\langle V\phi,\psi\rangle\big|^2=\big|\langle\phi,V\psi\rangle\big|^2= \big|\langle\phi,\psi\rangle\big|^2
\end{eqnarray*}
where the inequality follows from Cauchy-Schwarz inequality.
\end{proof}

\subsection{Dealing with $V_A\phi$}
The scheme of the proof of Theorem \ref{thm_main} is as follows: first we decompose $\phi=V_A\phi+\phi^\perp$ into its `additively compact' and `additively weak mixing' components.
Observe that $V_A(V_A\phi)=V_A\phi$ and $V_A\phi^\perp=0$.
The two main steps are to show that $\pl_u\langle A_uV_A\phi,M_u\psi\rangle=\langle V_A\phi,V_M\psi\rangle$ and that $\pl_u\langle A_uV_A\phi^\perp,M_u\psi\rangle=0$.
In this subsection we deal with the first step.

\begin{lemma}\label{lemma_compactreturn}
Let $\phi\in \mathcal{H}$ be additively compact (i.e. such that $V_A\phi=\phi$).
Then for any $p\in\D$
$$\pl_u\|A_u\phi-\phi\|=0$$
In other words, for all $\eps>0$ the set $S:=\{u\in K:\|A_u\phi-\phi\|<\eps\}$ is $DC^*$.
\end{lemma}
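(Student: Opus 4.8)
The plan is to exploit the fact that $\phi$ is additively compact, so that the orbit $\{A_u\phi : u \in R\}$ is precompact in norm. Precompactness means that for any $\eps > 0$, the orbit can be covered by finitely many $\eps/2$-balls; equivalently, the set of ``return times'' to any norm neighborhood of $\phi$ is large in an additive sense. The goal $\pl_u\|A_u\phi - \phi\| = 0$ is, by definition of the $p$-limit into the compact space $[0,\infty]$ (or rather the compactification of the relevant bounded interval), the same as showing that for every $\eps > 0$ the set $S = \{u \in R : \|A_u\phi - \phi\| < \eps\}$ belongs to $p$. Since this must hold for the given $p \in \D$, and we want the conclusion phrased as ``$S$ is $DC^*$,'' I would aim to show the stronger statement that $S$ lies in \emph{every} ultrafilter of $\D$, which by the observation right after Definition \ref{def_central} is exactly what $DC^*$ means.

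First I would establish that $S$ is a syndetic set in the additive group $(R,+)$, or at least that it contains a set large enough to force membership in every $p \in \overline{\MI}$. The standard route: by additive compactness, the set $S$ is an additive \emph{IP$^*$} set (or at minimum syndetic for addition), because return times of a compact vector to an $\eps$-ball along a Følner-type or idempotent-based argument always form an IP$^*$ set. Concretely, using Lemma \ref{lemma_kronecker} with $k = 1$ we have $V_A\phi = \pl_u A_u\phi = \phi$ in the weak topology for any $p \in \overline{\MI}$; combining this with the norm precompactness upgrades weak convergence along $p$ to norm convergence along $p$, giving $\pl_u \|A_u\phi - \phi\| = 0$ directly for each additive minimal idempotent, hence $S \in p$ for all such $p$. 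This is the heart of the matter: on the precompact orbit, weak and norm topologies agree on limit points, so the weak statement $\pl_u A_u\phi = \phi$ forces $\pl_u A_u\phi = \phi$ in norm.

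The remaining step is to pass from ``$S \in p$ for every additive minimal idempotent'' to ``$S \in p$ for every $p \in \D = \overline{\MI} \cap \mathcal{MMI}$,'' i.e.\ to $DC^*$. Here I would argue that the property of being an additive IP$^*$ set (intersecting every additive idempotent, and in particular membership in every ultrafilter of $\overline{\MI}$) survives closure: since $\{q \in \beta R : S \in q\} = \overline{S}$ is clopen, and it contains all of $\MI$, it contains the closure $\overline{\MI}$, hence contains $\D \subset \overline{\MI}$. Thus $S \in p$ for every $p \in \D$, which is precisely the assertion that $S$ is $DC^*$.

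The main obstacle I anticipate is the upgrade from weak to norm convergence along the ultrafilter. Lemma \ref{lemma_kronecker} only delivers the weak-limit identity $V_A\phi = \pl_u A_u\phi$, whereas the lemma demands a \emph{norm} statement $\pl_u \|A_u\phi - \phi\| = 0$. The precompactness of the orbit is exactly the hypothesis that lets me bridge this gap: any weak limit point of a precompact set that coincides with a single point $\phi$ must be approached in norm as well, since on a norm-precompact set the weak and norm topologies induce the same notion of convergence to a point of the set. I would spell this out by taking, along $p$, a net converging weakly to $\phi$; precompactness extracts a norm-convergent subnet whose limit must equal the weak limit $\phi$, and the uniqueness of $p$-limits in the compact metric space $\overline{\{A_u\phi\}}$ then forces the full norm $p$-limit to be $\phi$.
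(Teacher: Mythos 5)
Your proof is correct, but it takes a genuinely different route from the paper's. The paper argues combinatorially: it covers the precompact orbit $\{A_u\phi:u\in R\}$ by finitely many $\eps/2$-balls, say $|F|$ of them, and shows by pigeonhole on partial sums $z_1'=z_1,\ z_2'=z_1+z_2,\dots$ that for any set $Z$ of cardinality $r=|F|+1$ two partial sums lie in the same ball, whence their difference, which lies in $FS(Z)$, belongs to $S$; thus $S$ meets $FS(Z)$ for every $Z$ with $|Z|=r$. Combined with Lemma \ref{lemma_centralIP0} --- which exploits only the fact that $p\in\mathcal{MMI}$, i.e.\ the \emph{multiplicative} half of the definition of $\D$ --- this shows $S$ meets every $DC$ set, so $S$ is $DC^*$. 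Your argument instead uses only the \emph{additive} half: Lemma \ref{lemma_kronecker} gives $\pl_u A_u\phi=V_A\phi=\phi$ weakly for every $p\in\overline{\MI}\supset\D$, and the weak-to-norm upgrade on the norm-compact orbit closure (where the weak and norm topologies coincide, so the norm $p$-limit, which exists by compactness, must agree with the unique weak $p$-limit $\phi$) yields the norm statement; this upgrade is valid as you describe it. Each approach buys a strengthening in a different direction: the paper's proof shows $S$ is additively IP$_0^*$ (it meets every set containing finite-sums sets $FS(Z)$ with $|Z|$ arbitrarily large), a purely combinatorial property making no reference to $\D$, while yours shows $S$ belongs to every ultrafilter in $\overline{\MI}$, not merely those in $\D$. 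Your route is shorter, but it leans on Lemma \ref{lemma_kronecker} and hence on the external Corollary 4.6 of \cite{Bergelson03}, whereas the paper's pigeonhole argument is self-contained at this point. One small redundancy in your write-up: since Lemma \ref{lemma_kronecker} is stated for all $p\in\overline{\MI}$, the separate clopen-closure step passing from $\MI$ to $\overline{\MI}$ is superfluous (though harmless).
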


\begin{proof}
The orbit closure $\overline{\{A_u\phi:u\in R\}}$ of $\phi$ is trivially contained in the union $\bigcup_{u\in R}B(A_u\phi,\epsilon/2)$.
Hence, by compactness, there exists some finite set $F\subset R$ such that the union $\bigcup_{u\in F}B(A_u\phi,\epsilon/2)$ contains the whole orbit of $\phi$ under the additive sub-semigroup $S_A$.
Let $r:=|F|+1$.

Let $Z\subset K$ be an arbitrary subset with cardinality $|Z|=r$.
We claim that the set of finite sums $FS(Z)\cap S\neq\emptyset$.
Indeed, let $Z=\{z_1,...,z_r\}$, let $z_i'=z_1+...+z_i$ for each $i=1,...,r$ and note that $z_i-z_j\in FS(Z)$ for each $i>j$.
By the pigeonhole principle, there are $1\leq i<j\leq r$ such that $A_{z_i'}\phi$ and $A_{z_j'}\phi$ are in the same ball $B(A_u\phi,\epsilon/2)$ for some $u\in F$.
Thus $\|A_{z_i'}f-A_{z_j'}f\|<\epsilon$ and since the action of $S_A$ is an isometry of $\mathcal{H}$ we conclude that $\|A_{z_i'-z_j'}\phi-\phi\|<\epsilon$.
This implies that $z_i'-z_j'\in S$ and it proves the claim.

By Lemma \ref{lemma_centralIP0}, every $DC$ set contains $FS(Z)$ for some set $Z\subset R$ with $|Z|=r+1$. Therefore $S$ has nonempty intersection with every $DC$ set, and hence $S$ is $DC^*$ as desired.
\end{proof}

\begin{lemma}\label{lemma_large}
For all $p\in\D$ and $\phi,\psi\in \mathcal{H}$ we have
$$\pl_u\langle A_u(V_A\phi),M_u\psi\rangle=\langle V_A\phi,V_M\psi\rangle$$
\end{lemma}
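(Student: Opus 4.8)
The plan is to reduce the statement about $V_A\phi$ to the behavior of the additively compact vector under the multiplicative limit, using the key fact from Lemma \ref{lemma_compactreturn} that the additive orbit of a compact vector is essentially stationary along $p$. First I would set $\phi_0 := V_A\phi$, so that $\phi_0$ is additively compact, and observe that by Lemma \ref{lemma_kronecker} we already know $V_M\psi = \pl_u M_u\psi$ in the weak topology. The heart of the matter is to show that inside the inner product $\langle A_u\phi_0, M_u\psi\rangle$ the factor $A_u\phi_0$ can be replaced, in the $p$-limit, by $\phi_0$ itself.

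The main step goes as follows. I would write
$$\langle A_u\phi_0, M_u\psi\rangle = \langle \phi_0, M_u\psi\rangle + \langle A_u\phi_0 - \phi_0, M_u\psi\rangle.$$
For the second term, Cauchy-Schwarz gives
$$\big|\langle A_u\phi_0 - \phi_0, M_u\psi\rangle\big| \leq \|A_u\phi_0 - \phi_0\|\cdot\|\psi\|,$$
where I used that $M_u$ is an isometry so $\|M_u\psi\| = \|\psi\|$. By Lemma \ref{lemma_compactreturn} we have $\pl_u\|A_u\phi_0 - \phi_0\| = 0$, and hence the second term vanishes in the $p$-limit. For the first term, $\langle \phi_0, M_u\psi\rangle$ is just a bounded scalar-valued function of $u$ (bounded by $\|\phi_0\|\cdot\|\psi\|$), so its $p$-limit exists and equals $\langle \phi_0, \pl_u M_u\psi\rangle = \langle \phi_0, V_M\psi\rangle$, where the interchange of the $p$-limit with the inner product is legitimate because weak convergence along $p$ (Lemma \ref{lemma_kronecker}) is precisely convergence of each inner product $\langle \phi_0, M_u\psi\rangle$.

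Combining the two pieces yields
$$\pl_u\langle A_u\phi_0, M_u\psi\rangle = \langle \phi_0, V_M\psi\rangle = \langle V_A\phi, V_M\psi\rangle,$$
where in the last equality I used that $\phi_0 = V_A\phi$ and that $V_A$ is a self-adjoint idempotent, so $\langle V_A\phi, V_M\psi\rangle = \langle \phi_0, V_M\psi\rangle$ trivially. The only subtlety to be careful about is that the $p$-limit of a sum equals the sum of the $p$-limits whenever both exist; this is routine, since $\pl$ respects addition of bounded scalar sequences and the two individual limits have just been shown to exist. I expect no serious obstacle here: essentially all the work has been front-loaded into Lemma \ref{lemma_compactreturn} (the $DC^*$ recurrence of compact vectors) and Lemma \ref{lemma_kronecker} (the weak identification of $V_M$), and the present lemma is the clean combination of the two via a single Cauchy-Schwarz estimate.
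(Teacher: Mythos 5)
Your proposal is correct and follows essentially the same route as the paper's proof: both apply Lemma \ref{lemma_kronecker} to identify $\pl_u\langle V_A\phi,M_u\psi\rangle=\langle V_A\phi,V_M\psi\rangle$, and Lemma \ref{lemma_compactreturn} (applied to the additively compact vector $V_A\phi$) together with Cauchy--Schwarz and the isometry of $M_u$ to replace $A_uV_A\phi$ by $V_A\phi$ in the limit. The only difference is presentational: the paper runs an explicit $\eps/2$-argument with two sets $S_1,S_2\in p$, whereas you phrase the same estimates as arithmetic of $p$-limits.
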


\begin{proof}
We will assume, without loss of generality, that $\|\phi\|,\|\psi\|\leq1$.
In view of Lemma \ref{lemma_kronecker} we have
$$\pl_u\langle V_A\phi,M_u\psi\rangle=\Big\langle V_A\phi,\big(\pl_uM_u\psi\big)\Big\rangle=\langle V_A\phi,V_M\psi\rangle$$
Therefore, for every $\eps>0$, the set
$$S_1=\left\{u\in R:\left|\langle V_A\phi,M_u\psi\rangle-\langle V_A\phi,V_M\psi\rangle\right|<\frac\eps2\right\}$$
belongs to $p$.

Applying Lemma \ref{lemma_compactreturn} with $V_A\phi$ we get that the set $S_2:=\{u\in R   :\|A_uV_A\phi-V_A\phi\|<\eps/2\}$ is also in $p$.
Using the Cauchy-Schwarz inequality we have that for any $u\in S_2$
$$\left|\langle V_A\phi,M_u\psi\rangle-\langle A_uV_A\phi,M_u\psi\rangle\right|<\frac\eps2$$
Finally let $S:=S_1\cap S_2\in p$ and let $u\in S$.
We conclude that
$$\left|\langle A_uV_A\phi,M_u\psi\rangle-\langle V_A\phi,V_M\psi\rangle\right|<\eps$$
which finishes the proof.
\end{proof}

\subsection{Dealing with $\phi^\perp$ when $R$ is a field}
We now turn our attention to the weak mixing component $\phi^\perp$.
Dealing with this component 
in the general case requires some technical steps which obscure the main ideas.
In order to clarify these ideas we restrict our attention in this subsection to the case where $R$ is a field; the general case is treated in the next subsection.
(Of course the results of this subsection also follow logically from the results in the next one.)

We will use the following version of the van der Corput trick.
\begin{proposition}[cf. {\cite[Theorem 2.3]{Bergelson_McCutcheon07}}]\label{lema_pvdc}
Let $p\in\D$, let $\mathcal{H}$ be a Hilbert space, let $(a_u)_{u\in\Rs}$ be a bounded sequence in $\mathcal{H}$ indexed by $\Rs$.
If $\displaystyle\pl_u\langle a_{bu},a_u\rangle=0$ for all $b$ in a co-finite subset of $\Rs$ then
$\displaystyle\pl_ua_u=0$ in the weak topology of $\mathcal{H}$.
\end{proposition}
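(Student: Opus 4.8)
The plan is to mimic the classical van der Corput argument, but with Cesàro averages replaced by the ultrafilter limit $\pl$, exploiting the algebraic compatibility of $\pl$ with the multiplicative structure established earlier (Lemma \ref{lemma_kplim}). The goal is to show that if $(a_u)_{u\in\Rs}$ is bounded and $\pl_u\langle a_{bu},a_u\rangle=0$ for every $b$ in a cofinite subset of $\Rs$, then $\pl_u a_u=0$ weakly; that is, $\pl_u\langle a_u,\psi\rangle=0$ for every fixed $\psi\in\mathcal{H}$.

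First I would reduce the weak statement to a norm estimate on a suitable ``averaged'' vector. Set $w:=\pl_u a_u\in\mathcal{H}$ (the weak limit exists because bounded sets in $\mathcal{H}$ are weakly compact and $\beta R$ limits of nets into a compact space are well defined). To show $w=0$ it suffices to bound $\|w\|^2=\langle w,w\rangle$. The key computation is to rewrite $\langle w,w\rangle$ using the multiplicative shift: for a fixed $b\in\Rs$, Lemma \ref{lemma_kplim} gives $\pl_u a_{bu}=b\,\pl_u a_u$ in the relevant compact (weak-$*$) sense, which lets me compare the ``diagonal'' limit $\langle w,w\rangle$ with the shifted inner products $\langle a_{bu},a_u\rangle$ whose $\pl$-limit is assumed to vanish. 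Concretely, I expect to produce, for each $\eps>0$, a set in $p$ on which $\langle a_{bu},a_u\rangle$ is small simultaneously for finitely many $b$, and then average over those $b$ to squeeze $\|w\|^2$.

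The technical heart of the argument is to run the van der Corput ``double averaging'' entirely inside $p$. I would choose a finite set $B=\{b_1,\dots,b_k\}$ of units (or nonzero elements) in the cofinite set from the hypothesis, form the vectors $\tfrac1k\sum_{j=1}^k a_{b_j u}$, and estimate their $\pl$-limit norm. Expanding $\bigl\|\tfrac1k\sum_j a_{b_j u}\bigr\|^2=\tfrac1{k^2}\sum_{i,j}\langle a_{b_i u},a_{b_j u}\rangle$ and taking $\pl_u$, the off-diagonal terms reduce (after a multiplicative change of variable $u\mapsto b_j^{-1}u$, justified by Lemma \ref{lemma_kplim} and the fact that $up,\,u^{-1}p$ stay in the relevant idempotent class by Lemma \ref{lemma_upinmi}) to limits of the form $\pl_u\langle a_{cu},a_u\rangle$ with $c=b_i b_j^{-1}$, which vanish by hypothesis for all but finitely many pairs. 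The diagonal terms contribute $\tfrac1k\|w\|^2$ after accounting for the multiplicative shift invariance of $w$. Letting $k\to\infty$ forces $\|w\|=0$.

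The main obstacle I anticipate is bookkeeping the interplay between the weak limit and the multiplicative shifts: unlike the Cesàro setting, I cannot freely reindex sums, so every ``change of variable'' must be recast as an identity about $\pl$-limits using Lemma \ref{lemma_kplim}, and I must ensure that the finitely many ``bad'' ratios $b_i b_j^{-1}$ excluded by the cofiniteness hypothesis do not accumulate as $k$ grows. Managing this requires choosing $B$ so that all pairwise ratios lie in the cofinite good set (a pigeonhole/Ramsey-type selection of $b_j$'s), after which the off-diagonal contribution is genuinely negligible and the diagonal contribution dominates, yielding $\|w\|^2\le \tfrac1k(\sup_u\|a_u\|^2)\to 0$.
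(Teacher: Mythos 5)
There is a genuine gap, and it is fatal to the route you chose: your argument never uses the multiplicative idempotency of $p$ (which holds because $p\in\D\subset\mathcal{MMI}$), yet the statement is simply false for a general non-principal ultrafilter, so no correct proof can avoid it. The step that breaks is the ``change of variable $u\mapsto b_j^{-1}u$''. Lemma \ref{lemma_kplim} does not assert that $\pl$-limits are invariant under the reindexing $u\mapsto bu$; it asserts that $\pl_u x_{bu}$ (limit along $p$) equals the limit of $(x_v)$ along the \emph{different} ultrafilter $bp$. Moreover $bp\neq p$ for every $b\neq 1$ (the map $u\mapsto bu$ is fixed-point free on $\Rs$, and by the standard three-coloring argument no ultrafilter is invariant under a fixed-point-free map), and the hypothesis $\pl_u\langle a_{cu},a_u\rangle=0$ controls limits along $p$ \emph{only}. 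Hence your off-diagonal terms $\pl_u\langle a_{b_iu},a_{b_ju}\rangle$ become limits along $b_jp$ of correlation sequences, about which the hypothesis says nothing. The same defect kills the diagonal bookkeeping: the ``multiplicative shift invariance of $w$'' you invoke is false --- in general $\pl_u a_{bu}\neq w$ weakly (take $a_v=1_E(v)\in\C$ with $E\in bp\setminus p$) --- so $\pl_u\langle a_{b_ju},w\rangle$ need not equal $\|w\|^2$ and the Cauchy--Schwarz squeeze never starts. Lemma \ref{lemma_upinmi} does not repair this: it keeps $b_jp$ inside $\overline{\MI}$, but the hypothesis is an assumption about the single ultrafilter $p$, not about all of $\overline{\MI}$ or $\D$. (A further, secondary, issue: in a general \FIID{} the ratios $b_ib_j^{-1}$ need not lie in $R$ at all.)

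The reason the proposition is nevertheless true --- and the way the cited proof of \cite[Theorem 2.3]{Bergelson_McCutcheon07} runs --- is that idempotency is the substitute for the shift invariance that Ces\`aro averages have and ultrafilter limits lack. Since $pp=p$, iterated limits recombine: $\pl_{g_1}\cdots\pl_{g_m}F(g_1g_2\cdots g_m)=\pl_gF(g)$ for any bounded $F$. One therefore fixes $k$, forms the ``tail product'' vectors $y_i=a_{g_ig_{i+1}\cdots g_k}$, $1\le i\le k$, and evaluates the iterated limit $\pl_{g_1}\cdots\pl_{g_k}$ of the relevant quantities. For $i<j$ the pair $(y_i,y_j)$ has the form $(a_{bh},a_h)$ with base $h=g_j\cdots g_k$ built from \emph{inner} variables and multiplier $b=g_i\cdots g_{j-1}$ built from \emph{outer} variables; this matches exactly the quantifier order of the hypothesis (fixed $b$, limit over the base), so each off-diagonal term has iterated limit $\pl_b\pl_h\langle a_{bh},a_h\rangle=0$ (cofinite sets lie in the non-principal $p$). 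Meanwhile each $\langle y_i,w\rangle$ has iterated limit $\|w\|^2$, and the diagonal terms are bounded by $C^2=\sup_u\|a_u\|^2$. Expanding $\bigl\|\tfrac1k\sum_iy_i\bigr\|^2$ and applying Cauchy--Schwarz before passing to the iterated limit gives $\|w\|^4\le C^2\|w\|^2/k$ for every $k$, whence $w=0$. Your finite almost-orthogonal-family intuition is the right one, but it must be run on products of $p$-generic elements, exploiting $pp=p$, not on dilates $b_1u,\dots,b_ku$ of a single variable.
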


\begin{lemma}\label{cor_wm}
Let $K$ be a field, let $\mathcal{H}$ be a Hilbert space, let $(U_g)_{g\in\AK}$ be a unitary anti-representation of $\AK$ on $\mathcal{H}$ and let $\phi^\perp,\psi\in \mathcal{H}$, where we assume that $V_A\phi^\perp=0$.
Then, for all $p\in\D$ we have
$$\pl_u\langle A_u\phi^\perp,M_u\psi\rangle=0$$
\end{lemma}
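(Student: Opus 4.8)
The plan is to apply the van der Corput trick (Proposition \ref{lema_pvdc}) to the sequence $a_u:=\langle A_u\phi^\perp,M_u\psi\rangle\cdot w_u$ where the natural choice is $a_u=A_u\phi^\perp$, or more precisely to reduce the weak-limit statement to a correlation estimate. First I would set $a_u=M_u\psi$ is fixed-target-like, so instead the cleaner route is to take $a_u:=A_u\phi^\perp$, a bounded sequence in $\mathcal{H}$ indexed by $\Rs$, and observe that it suffices to prove $\pl_u a_u=0$ in the weak topology, since then $\pl_u\langle A_u\phi^\perp, M_u\psi\rangle$ would follow once we control the moving target $M_u\psi$. Because the target $M_u\psi$ is itself $u$-dependent, the genuinely correct object to feed into Proposition \ref{lema_pvdc} is $a_u=A_u\phi^\perp$, and I would then need to pass from $\pl_u a_u=0$ (weak) to the paired statement; here I would use that $M_u\psi$ weakly converges to $V_M\psi$ along $p$ (Lemma \ref{lemma_kronecker}), together with an approximation argument splitting $M_u\psi$ into its limit plus an error.

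The heart of the matter is verifying the van der Corput hypothesis: for all $b$ in a co-finite subset of $\Rs$ one must show $\pl_u\langle a_{bu},a_u\rangle=0$, i.e.
$$\pl_u\langle A_{bu}\phi^\perp,A_u\phi^\perp\rangle=0.$$
Using that the representation is isometric (indeed unitary, since $K$ is a field) and the anti-representation relation together with the distributive law \eqref{eq_distr}, namely $M_uA_v=A_{uv}M_u$, I would rewrite $\langle A_{bu}\phi^\perp,A_u\phi^\perp\rangle$ by applying $A_u^{-1}$ (available because $U$ is a representation of the \emph{group} $\AK$ here) to both slots, obtaining $\langle A_{(b-1)u}\phi^\perp,\phi^\perp\rangle$. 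Then I would invoke Lemma \ref{lemma_kronecker}: for fixed $c=b-1\neq0$ in $\Rs$ and $p\in\overline{\MI}$, the weak limit $\pl_u A_{cu}\phi^\perp=V_A\phi^\perp=0$ by the standing hypothesis $V_A\phi^\perp=0$. Hence $\pl_u\langle A_{(b-1)u}\phi^\perp,\phi^\perp\rangle=\langle V_A\phi^\perp,\phi^\perp\rangle=0$ for every $b\neq1$, which is a co-finite set of $b\in\Rs$, exactly as required.

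Once the van der Corput hypothesis holds, Proposition \ref{lema_pvdc} yields $\pl_u A_u\phi^\perp=0$ weakly. To finish, I would handle the moving target: write $\langle A_u\phi^\perp,M_u\psi\rangle=\langle A_u\phi^\perp,V_M\psi\rangle+\langle A_u\phi^\perp,(M_u\psi-V_M\psi)\rangle$. The first term tends to $\langle 0,V_M\psi\rangle=0$ along $p$ by the weak convergence just established. For the second term I would argue that $M_u\psi-V_M\psi\to0$ weakly along $p$ by Lemma \ref{lemma_kronecker}, and combine this with the boundedness of $\{A_u\phi^\perp\}$; the main obstacle I anticipate is that a product of two merely weakly-null sequences need not be null, so a little care is needed—one should instead pair the weak convergence $\pl_u A_u\phi^\perp=0$ against the \emph{fixed} vector $V_M\psi$ for the dominant term, and control the residual via a direct estimate or by absorbing $M_u\psi$ into the van der Corput sequence from the start by setting $a_u=A_u\phi^\perp$ and noting $|\langle A_u\phi^\perp,M_u\psi\rangle|\le\|a_u\|\,\|\psi\|$ is not enough. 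Thus the cleanest execution feeds the target into the sequence directly; the delicate point is ensuring the correlation computation still closes, which it does because $M_u$ intertwines with the additive action via \eqref{eq_distr}.
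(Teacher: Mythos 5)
Your proposal contains a genuine gap, and you partly diagnose it yourself without closing it. Running van der Corput on $a_u=A_u\phi^\perp$ is redundant (Lemma \ref{lemma_kronecker} already gives $\pl_u A_u\phi^\perp=V_A\phi^\perp=0$ weakly, no van der Corput needed) and, more importantly, insufficient: weak nullity of $A_u\phi^\perp$ says nothing about $\pl_u\langle A_u\phi^\perp,M_u\psi\rangle$, because the target $M_u\psi$ moves with $u$. Your splitting $M_u\psi=V_M\psi+(M_u\psi-V_M\psi)$ disposes of the first term only; what remains is $\langle A_u\phi^\perp,\,M_u\psi-V_M\psi\rangle$, a pairing of two sequences each of which converges to $0$ merely weakly, and such a pairing need not vanish (for an orthonormal sequence $(e_u)$ one has $\langle e_u,e_u\rangle=1$). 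Nothing in your proposal controls this term; splitting $\psi$ further into its multiplicatively compact and multiplicatively weak mixing parts would reduce it to the case where $\psi$ is multiplicatively weak mixing, but that core case --- an additively weak mixing vector paired against a multiplicatively weak mixing moving target --- is exactly what the lemma is about, and your argument never touches it. Your closing sentence asserts that ``the correlation computation still closes,'' but you never define the sequence for which it is supposed to close, nor perform the computation, and that computation is the real content of the proof.

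The paper's proof executes the idea you only gesture at: since $K$ is a field, $M_u$ is unitary, so one first writes $\langle A_u\phi^\perp,M_u\psi\rangle=\langle M_{1/u}A_u\phi^\perp,\psi\rangle$, which turns the moving target into the fixed vector $\psi$, and only then applies Proposition \ref{lema_pvdc}, to the twisted sequence $a_u:=M_{1/u}A_u\phi^\perp$. The correlation computation, using the anti-representation form of distributivity \eqref{eq_distributivityinverted}, gives for all $b\in K\setminus\{-1,0,1\}$
$$\langle a_{ub},a_u\rangle=\langle A_{u(b-1/b)}\phi^\perp,M_b\phi^\perp\rangle,$$
and since $b-1/b\neq0$, Lemma \ref{lemma_kronecker} yields $\pl_u\langle a_{ub},a_u\rangle=\langle V_A\phi^\perp,M_b\phi^\perp\rangle=0$; van der Corput then gives $\pl_u a_u=0$ weakly, and pairing with the fixed vector $\psi$ finishes the proof. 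Note that the excluded set of $b$ is $\{-1,0,1\}$, not just $\{1\}$ as in your purely additive correlation $\langle A_{(b-1)u}\phi^\perp,\phi^\perp\rangle$: the condition $b-1/b\neq0$ is precisely where the additive and multiplicative structures interact, and it only becomes visible once the correct twisted sequence is in hand.
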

\begin{proof}
Observe that, since we deal with an anti-representation, the distributive law (see \eqref{eq_distr}) takes the form
\begin{equation}\label{eq_distributivityinverted}
  A_vM_u=M_uA_{vu}
\end{equation}
for any $v\in K$ and $u\in K^*$.
Let $a_u=M_{1/u}A_u\phi^\perp$.
Then for all $b\in K\setminus\{-1,0,1\}$, using \eqref{eq_distributivityinverted} and the fact that isometries preserve scalar products we have
$$\langle a_{ub},a_u\rangle=\langle M_{1/ub}A_{ub}\phi^\perp,M_{1/u}A_u\phi^\perp\rangle=\langle A_{u(b-1/b)}\phi^\perp,M_b\phi^\perp\rangle$$
Therefore, it follows from Lemma \ref{lemma_kronecker} that for every $p\in\D$ we have
$$\pl_u\langle a_{ub},a_u\rangle=\left\langle\pl_uA_{u(b-1/b)}\phi^\perp,M_b\phi^\perp\right\rangle=\langle V_A\phi^\perp,M_b\phi^\perp\rangle=0$$
By Proposition \ref{lema_pvdc} we conclude that $\displaystyle\pl_uM_{1/u}A_u\phi^\perp=\pl_ua_u=0$.
Hence we have
\begin{eqnarray*}
  \pl_u\langle A_u\phi^\perp,M_u\psi\rangle&=&\pl_u\langle M_{1/u}A_u\phi^\perp,\psi\rangle\\&=&\left\langle\pl_uM_{1/u}A_u\phi^\perp,\psi\right\rangle=0
\end{eqnarray*}
\end{proof}

\subsection{Dealing with $\phi^\perp$ when $R$ is a general \FIID}

In this subsection we extend the scope of Lemma \ref{cor_wm} from the previous sub-section to the case when we have a general \FIID{} (not necessarily a field). Namely, we will prove:

\begin{lemma}\label{lemma_wm}
Assume $R$ is an \FIID, let $\mathcal{H}$ be a Hilbert space, let $(U_g)_{g\in\AR}$ be an isometric anti-representation of $\AR$ on $\mathcal{H}$ and let $\phi^\perp,\psi\in \mathcal{H}$.
Assume that $V_A\phi^\perp=0$.
Then, for all $p\in\D$ we have
$$\pl_u\langle A_u\phi^\perp,M_u\psi\rangle=0$$
\end{lemma}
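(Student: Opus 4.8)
The plan is to mimic the field case (Lemma \ref{cor_wm}) but to handle the fact that in a general \FIID{} we cannot divide by $u$, so the substitution $a_u=M_{1/u}A_u\phi^\perp$ is unavailable. First I would look for a way to apply the van der Corput trick (Proposition \ref{lema_pvdc}) directly to the sequence $a_u=A_u\phi^\perp$ (suitably paired with $\psi$), and compute the correlations $\pl_u\langle a_{bu},a_u\rangle$ for $b$ in a co-finite set. Using the anti-representation relation and the commutation law $A_vM_u=M_uA_{vu}$, one rewrites $\langle A_{bu}\phi^\perp,A_u\phi^\perp\rangle$ so that an additive shift $A_{u(\cdot)}$ appears, and then invokes Lemma \ref{lemma_kronecker} (with $k$ a nonzero element) to evaluate the $p$-limit as a pairing against $V_A\phi^\perp=0$. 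The key point is that Lemma \ref{lemma_kronecker} only requires $p\in\overline{\MI}$ and works with shifts of the form $A_{ku}$, not $A_{u/k}$, so it is available over any \FIID.

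Concretely, I would set $a_u=M_u\psi$-paired quantities aside and instead work with the vectors $b_u=A_u\phi^\perp$ themselves, aiming to show $\pl_u\langle A_u\phi^\perp,M_u\psi\rangle=0$ via a van der Corput estimate on $u\mapsto\langle A_u\phi^\perp, M_u\psi\rangle$. The correlation computation will involve $\langle A_{bu}\phi^\perp, A_u\phi^\perp\rangle$ (the $M_u\psi$ factors pairing off by isometry after applying the distributive relation \eqref{eq_distributivityinverted}), and I expect this to reduce, through Lemma \ref{lemma_kplim} and Lemma \ref{lemma_kronecker}, to $\langle V_A\phi^\perp,\,(\text{something})\rangle=0$. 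Here $kp\in\overline{\MI}$ by Lemma \ref{lemma_upinmi}, which is exactly the \FIID{} ingredient that replaces the field-specific division in the previous subsection.

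The hard part will be the bookkeeping forced by the absence of multiplicative inverses: in the field proof the clean identity $\langle a_{ub},a_u\rangle=\langle A_{u(b-1/b)}\phi^\perp,M_b\phi^\perp\rangle$ exploited $1/u$ and $1/b$, and neither is legitimate in a general domain. I would therefore expect to multiply through by appropriate elements of $R$ (clearing denominators) and to apply Lemma \ref{lemma_kplim} to absorb the resulting factor $k$ into the ultrafilter, so that $\pl_u A_{ku}\phi^\perp=k\,\pl_u A_u\phi^\perp=k\,V_A\phi^\perp=0$. The main obstacle is ensuring that the rewriting of the correlation genuinely produces a shift of the form $A_{ku}$ applied to $\phi^\perp$ (so that Lemma \ref{lemma_kronecker} annihilates it), while the remaining factors are $u$-independent bounded vectors against which we pair; verifying that this holds for all $b$ in a co-finite subset of $\Rs$ is what licenses Proposition \ref{lema_pvdc} and yields $\pl_u A_u\phi^\perp=0$ weakly, from which the claim follows.
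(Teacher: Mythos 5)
There is a genuine gap, and it sits exactly at the point you yourself flag as ``the hard part''. Your plan is to run van der Corput on the vectors $b_u=A_u\phi^\perp$ and conclude $\pl_u A_u\phi^\perp=0$ weakly, ``from which the claim follows''. It does not follow: weak convergence to $0$ only controls pairings against a \emph{fixed} vector, whereas the second slot $M_u\psi$ moves with $u$ (in $\ell^2$ one has $e_n\to0$ weakly while $\langle e_n,e_n\rangle\equiv1$). In fact $\pl_u A_u\phi^\perp=V_A\phi^\perp=0$ weakly is already immediate from Lemma \ref{lemma_kronecker} with $k=1$, no van der Corput needed, and it gives no information about $\pl_u\langle A_u\phi^\perp,M_u\psi\rangle$. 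The suggestion that ``the $M_u\psi$ factors pair off by isometry after applying the distributive relation'' is where the argument breaks: isometry cancels $M_u$ only when \emph{both} slots carry the same factor $M_u$, and no application of $A_vM_u=M_uA_{vu}$ can manufacture a common $M_u$ factor in $\langle A_u\phi^\perp,M_u\psi\rangle$. In the field case that is precisely the job done by $M_{1/u}$, and it is the one step that has no elementary substitute in a general \FIID.

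The paper's proof replaces $M_{1/u}$ by the adjoint $M_u^T$, writing $\langle A_u\phi^\perp,M_u\psi\rangle=\langle M_u^TA_u\phi^\perp,\psi\rangle$ and applying Proposition \ref{lema_pvdc} to $a_u=M_u^TA_u\phi^\perp$; this is what makes the pairing vector fixed. But it forces three new ingredients, none of which appear in your outline: (i) the commutation rule $A_{uv}M_u^T=M_u^TA_v$ (Lemma \ref{lemma_integers1}); (ii) the fact that $M_uM_u^T$ is not the identity but the orthogonal projection onto the range of $M_u$, handled by showing that $\pl_u M_uM_u^T\phi=P\phi$ \emph{strongly} for a single projection $P$ (Lemma \ref{lemma_integers2}) --- this is where the nested ranges of the $M_u$ and the finite-index property of ideals enter, via Lemma \ref{lemma_finiteindexip*}; and (iii) the correlations genuinely produce the shift $A_{ub-u/b}$, with an honest division by $b$, legitimate only because $u$ may be restricted to the ideal $bR$, which lies in $p$ by Lemma \ref{lemma_finiteindexip*}, and which requires the ideal version of the Kronecker lemma (Lemma \ref{lemma_kroneckerideal}). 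Your proposed fix --- ``clearing denominators'' and absorbing constants via Lemma \ref{lemma_kplim} --- handles only multiplication of the index (shifts $A_{ku}$), not division, so it cannot produce (iii); and nothing in your outline addresses (i) or (ii) at all.
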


In the proof of this lemma we will need a few facts about isometric anti-representations of $\AR$.
First observe that, unlike the case when $R$ is a field, $M_u$ is not necessarily invertible.
Thus its adjoint $M_u^T$ (defined so that $\langle M_u\phi,\psi\rangle=\langle\phi,M_u^T\psi\rangle$ for all $\phi,\psi\in \mathcal{H}$) may not be in $\AR$.
However, since $A_u$ is invertible (and hence unitary) we have the following distributivity relation:
\begin{lemma}\label{lemma_integers1}
  Under the assumptions of Lemma \ref{lemma_wm} we have
  $$A_{uv}M_u^T=M_u^TA_v$$
\end{lemma}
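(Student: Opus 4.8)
The plan is to obtain the desired relation by taking Hilbert space adjoints of the inverted distributivity law \eqref{eq_distributivityinverted}, which was already derived for anti-representations. Recall that, because $(U_g)_{g\in\AR}$ is an anti-representation and the ring satisfies $M_uA_v=A_{uv}M_u$ (see \eqref{eq_distr}), one has the operator identity $A_vM_u=M_uA_{uv}$ exactly as in \eqref{eq_distributivityinverted} (here $uv=vu$ since $R$ is commutative). I would start from this identity and apply the adjoint $(\cdot)^T$ to both sides, using the standard rule $(ST)^T=T^TS^T$.

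The one structural fact that makes this work is that the additive elements act by unitaries. Indeed, $A_v$ corresponds to the invertible affine transformation $x\mapsto x+v$, whose inverse in $\AR$ is $A_{-v}$; since $U$ is an isometric anti-representation, $A_vA_{-v}=A_{-v}A_v$ equals the representation of the identity, so $A_v$ is an invertible isometry, hence unitary, with $A_v^T=A_v^{-1}=A_{-v}$. Taking adjoints of $A_vM_u=M_uA_{uv}$ therefore gives
$$M_u^TA_v^T=A_{uv}^TM_u^T,\qquad\text{i.e.}\qquad M_u^TA_{-v}=A_{-uv}M_u^T.$$
Since $v$ ranges over all of $R$, I would then replace $v$ by $-v$ (equivalently $-uv$ by $uv$) to conclude $M_u^TA_v=A_{uv}M_u^T$, which is precisely the claimed relation $A_{uv}M_u^T=M_u^TA_v$.

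There is no genuine obstacle here: the content is entirely in the two observations that (i) the anti-representation turns the ring's distributive law into \eqref{eq_distributivityinverted}, and (ii) the additive subgroup acts unitarily so that $A_v^T=A_{-v}$, after which the identity is a one-line adjoint computation. The only point requiring mild care is bookkeeping of the substitution $v\mapsto -v$ and the commutativity of $R$ when comparing the subscripts $uv$ and $vu$; these are routine and introduce no difficulty.
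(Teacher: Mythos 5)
Your proof is correct and is essentially the same argument as the paper's: the paper's one-line chain of inner-product identities, $\langle A_{uv}M_u^T\phi,\psi\rangle=\langle\phi,M_uA_{-uv}\psi\rangle=\langle\phi,A_{-v}M_u\psi\rangle=\langle M_u^TA_v\phi,\psi\rangle$, is precisely your adjoint computation written out against test vectors, resting on the same two facts, namely \eqref{eq_distributivityinverted} and the unitarity of $A_v$ (so $A_v^T=A_{-v}$). Nothing further is needed.
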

\begin{proof}
  We have, for any $\phi,\psi\in \mathcal{H}$
  \begin{eqnarray*}
    \langle A_{uv}M_u^T\phi,\psi\rangle=\langle\phi,M_uA_{-uv}\psi\rangle=\langle\phi,A_{-v}M_u\psi\rangle=\langle M_u^TA_v\phi,\psi\rangle.
  \end{eqnarray*}
  This implies the identity in question.
\end{proof}
Another difficulty which is present in our current context is the fact that the composition $M_nM_n^T$ is not necessarily the identity map.
The following lemma allows to circumvent this difficulty when $R$ is an \FIID{}.
\begin{lemma}\label{lemma_integers2}
  Under the assumptions of Lemma \ref{lemma_wm}, there exists an orthogonal projection $P:\mathcal{H}\to \mathcal{H}$ such that for every $\phi\in \mathcal{H}$ we have
  $$\pl_u\|M_uM_u^T\phi-P\phi\|=0$$
\end{lemma}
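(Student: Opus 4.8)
The plan is to set $Q_u:=M_uM_u^T$ for each $u\in\Rs$ and to define $P$ as the weak $p$-limit of the operators $Q_u$. First I would record the basic linear algebra: since the anti-representation is isometric, each $M_u$ is an isometry, so $M_u^TM_u$ is the identity and $Q_u=M_uM_u^T$ is the orthogonal projection onto the closed subspace $M_u\mathcal{H}$; in particular every $Q_u$ is self-adjoint and idempotent with $\|Q_u\|\le1$. Because the unit ball of $\mathcal{H}$ is weakly compact and $\|Q_u\phi\|\le\|\phi\|$, the weak limit $P\phi:=\pl_uQ_u\phi$ (taken in the weak topology) exists for every $\phi$ and defines a bounded linear operator with $\|P\|\le1$. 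Taking $p$-limits in the identity $\langle Q_u\phi,\psi\rangle=\langle\phi,Q_u\psi\rangle$ shows at once that $P$ is self-adjoint.

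The heart of the argument is to show that $P$ lands in, and fixes, the subspace $\mathcal{H}_\infty:=\bigcap_{v\in\Rs}M_v\mathcal{H}$. Here the \FIID{} hypothesis enters. Fix $v\in\Rs$; since $vR$ is a finite-index additive subgroup of $R$ and $p\in\D\subseteq\overline{\MI}$ lies in the closure of the additive idempotents, Lemma \ref{lemma_finiteindexip*} gives $vR\in p$. For $u\in vR$, writing $u=vw$ and using $M_{vw}=M_vM_w$, the range $M_u\mathcal{H}$ is contained in $M_v\mathcal{H}$, so $Q_vQ_u=Q_u$. Since $u\mapsto Q_vQ_u\phi$ and $u\mapsto Q_u\phi$ therefore agree on the set $vR\in p$, and since the bounded operator $Q_v$ commutes with weak limits, I get $Q_vP\phi=\pl_uQ_vQ_u\phi=\pl_uQ_u\phi=P\phi$. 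As $v$ was arbitrary, $P\phi\in M_v\mathcal{H}$ for all $v$, i.e. $P\phi\in\mathcal{H}_\infty$. Conversely, if $\phi\in\mathcal{H}_\infty$ then $Q_u\phi=\phi$ for every $u$ and hence $P\phi=\phi$. Combining these two facts, $P^2\phi=P(P\phi)=P\phi$, so $P$ is idempotent; together with self-adjointness this shows that $P$ is the orthogonal projection onto $\mathcal{H}_\infty$.

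Finally I would upgrade the weak convergence to the norm convergence asserted in the statement. Expanding $\|Q_u\phi-P\phi\|^2=\|Q_u\phi\|^2-2\,\mathrm{Re}\langle Q_u\phi,P\phi\rangle+\|P\phi\|^2$ and using that each $Q_u$ is a self-adjoint idempotent, one has $\pl_u\|Q_u\phi\|^2=\pl_u\langle Q_u\phi,\phi\rangle=\langle P\phi,\phi\rangle=\|P\phi\|^2$, where the last equality uses $P=P^T=P^2$; and $\pl_u\langle Q_u\phi,P\phi\rangle=\langle P\phi,P\phi\rangle=\|P\phi\|^2$ by weak convergence. Hence $\pl_u\|Q_u\phi-P\phi\|^2=0$, which is exactly $\pl_u\|M_uM_u^T\phi-P\phi\|=0$.

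I expect the main obstacle to be the idempotency of $P$: a weak limit of orthogonal projections is only a positive contraction in general, and the point where it is forced to be a genuine projection is precisely the containment $P\phi\in\mathcal{H}_\infty$, which relies on the finite-index property of the ideals $vR$ (via $vR\in p$, using $p\in\overline{\MI}$) together with the nesting $M_{vw}\mathcal{H}\subseteq M_v\mathcal{H}$. Once $P^2=P$ is established, the passage from weak to norm convergence is the standard Hilbert-space fact that weak convergence combined with convergence of the norms implies norm convergence.
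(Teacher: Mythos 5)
Your proof is correct, and it produces the same projection as the paper's, but by a genuinely different route. The paper never takes a weak limit: it fixes an enumeration $r_1,r_2,\dots$ of $\Rs$, sets $u_n=r_1\cdots r_n$, and defines $P$ at the outset as the orthogonal projection onto $S=\bigcap_{n\geq1}M_{u_n}\mathcal{H}$; norm convergence is then proved directly by expanding $\phi$ in an orthonormal basis adapted to the nested chain $\big(M_{u_n}\mathcal{H}\big)_{n\geq1}$, choosing $m$ so that the tail of $\phi$ beyond level $m$ has norm less than $\eps$, and concluding that $\|M_uM_u^T\phi-P\phi\|<\eps$ for every $u$ in the ideal $u_mR$, which lies in $p$ by Lemma \ref{lemma_finiteindexip*}. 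You instead build $P$ abstractly as a weak $p$-limit of the projections $Q_u$ (via weak compactness of balls), prove it is self-adjoint and idempotent --- the idempotency coming from identifying its range with $\bigcap_{v\in\Rs}M_v\mathcal{H}$, which invokes Lemma \ref{lemma_finiteindexip*} once for each $v$ --- and then upgrade weak to norm convergence using convergence of norms. The two constructions agree, since every $v\in\Rs$ divides some $u_n$, so $\bigcap_{n}M_{u_n}\mathcal{H}=\bigcap_{v\in\Rs}M_v\mathcal{H}$. The combinatorial engine is shared (finite-index ideals belong to $p$ because $p\in\overline{\MI}$, together with the nesting $M_{uv}\mathcal{H}\subseteq M_v\mathcal{H}$), but your organization is basis-free, gives an intrinsic description of the range of $P$ (in particular $P$ is visibly independent of $p$), and your identity $Q_vQ_u=Q_u$ for $u\in vR$ accomplishes cleanly what the paper does with its basis computation; indeed the paper's displayed formula for $P_u\phi$ is, strictly speaking, only an estimate, since $M_u\mathcal{H}$ may be a proper subspace of $M_{u_m}\mathcal{H}$, and your nested-range identity is exactly what repairs it. What the paper's route buys in exchange is more explicit information: for each $\eps$ and $\phi$, the set $\{u\in\Rs:\|M_uM_u^T\phi-P\phi\|<\eps\}$ contains an entire finite-index ideal, uniformly over all $p\in\overline{\MI}$, rather than merely being a member of the given ultrafilter.
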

\begin{proof}
  Let $P_u=M_uM_u^T$.
  Since $M_u$ is an isometry, $P_u$ is the orthogonal projection onto the image of $M_u$.
  Observe that, in particular, the image of $P_{u_1u_2}$ is contained in the image of each $P_{u_i}$, $i=1,2$.

  Let $\{r_1,r_2,\dots\}$ be an arbitrary enumeration of the elements of $\Rs$ and let $u_n=\prod_{i=1}^nr_i$.
  Let $S_n$ be the image of $M_{u_n}$, so that $P_{u_n}$ is the orthogonal projection onto $S_n$.
  Note that $S_{n+1}\subset S_n$.
  Let $S=\bigcap_{n\geq1} S_n$ and let $P:\mathcal{H}\to S$ be the orthogonal projection.
  Let $E_0$ be an orthonormal basis for $S$ and, for each $n\geq1$ let $E_n$ be an orthonormal basis for $S_n\cap(S_{n+1})^\perp$.
  Thus $E=\bigcup_{n\geq0} E_n$ is an orthonormal basis for $\mathcal{H}$.
  Write $\phi$ in terms of the basis $E$ as $\phi=\sum_{n\geq0}\sum_{e\in E_n}c_ee$.
  For a fixed $\eps>0$ let $m\in\N$ be such that $\sum_{n\geq m}\sum_{e\in E_n}|c_e|^2<\epsilon^2$.

  Next, let $u$ be in the ideal $u_mR$.
  We have that the image of $P_u$ is contained in the image of $P_{u_m}$, so $P_uh\in S_m$ and hence
  $$P_u\phi=\sum_{e\in E_0}c_ee+\sum_{n=m}^\infty\sum_{e\in E_n}c_ee=P\phi+\sum_{n=m}^\infty\sum_{e\in E_n}c_ee$$
  Therefore $\|P_u\phi-P\phi\|<\eps$.
  Since the ideal $u_mR$ has finite index as an additive group, it follows from Lemma \ref{lemma_finiteindexip*} that it belongs to $p$.
  We conclude that $\pl M_nM_n^T\phi=\pl P_n\phi=P\phi$ in the strong topology, as desired.
\end{proof}

Finally, we need a strengthening of Lemma \ref{lemma_kronecker}.
\begin{definition}
  Let $R$ be an integral domain, let $b\in R$ and let $p\in\beta R$.
  Assume that $bR\in p$.
  Given a sequence $(x_u)_{u\in R}$ in a compact space $X$ we define $\pl_u x_{u/b}$ to be the point $x\in X$ such that for every neighborhood $U$ of $x$, the set $\{u\in bR:x_{u/b}\in U\}\in p$.
\end{definition}
\begin{lemma}\label{lemma_kroneckerideal}
  Let $R$ be an \FIID, let $p\in\D$ and let $k,b\in\Rs$.
  For any unitary anti-representation $(U_g)_{g\in\AR}$ of the semigroup $\AR$ on a Hilbert space $\mathcal{H}$ and any $\phi\in \mathcal{H}$ we have
  $$\pl_uA_{ku/b}\phi=V_A\phi\qquad\text{in the weak topology}$$
\end{lemma}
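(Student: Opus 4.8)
The plan is to reduce the ideal-indexed limit $\pl_u A_{ku/b}\phi$ to an ordinary $p$-limit along the ``divided'' ultrafilter $b^{-1}p$, and then simply invoke the second half of Lemma \ref{lemma_kronecker}. First I would check that the limit is even well-defined, i.e. that $bR\in p$. Since $\D=\overline{\MI}\cap\mathcal{MMI}\subseteq\overline{\MI}$, the ultrafilter $p$ lies in the closure of the additive idempotents; and because $R$ is an \FIID{}, the ideal $bR$ is a finite-index (hence normal) additive subgroup of $(R,+)$. Lemma \ref{lemma_finiteindexip*} then yields $bR\in p$. This guarantees that the ultrafilter $b^{-1}p=\{E\subset R: bE\in p\}$ is defined, and a direct check shows $b(b^{-1}p)=p$ (using $bR\in p$).

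Next I would unwind the definition of $\pl_u A_{ku/b}\phi$. Fix a weak neighborhood $U$ of $V_A\phi$; the claim amounts to $\{u\in bR: A_{ku/b}\phi\in U\}\in p$. Writing $u=bv$ with $v\in R$, so that $A_{ku/b}\phi=A_{kv}\phi$, this set equals $b\{v\in R: A_{kv}\phi\in U\}$, and by the very definition of $b^{-1}p$ one has $bS\in p$ if and only if $S\in b^{-1}p$. Hence the whole statement is equivalent to $\{v\in R: A_{kv}\phi\in U\}\in b^{-1}p$ for every such $U$, that is, to the weak-topology identity $(b^{-1}p)\operatorname{-}\lim_v A_{kv}\phi=V_A\phi$.

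The only genuinely nontrivial point, and the step I expect to be the main obstacle, is to verify that $b^{-1}p$ is again an admissible ultrafilter for Lemma \ref{lemma_kronecker}, i.e. that dividing by $b$ keeps us inside $\overline{\MI}$. This is exactly Lemma \ref{lemma_upinmi} applied with $u=b$, which gives $b^{-1}p\in\overline{\MI}$. Since the second assertion of Lemma \ref{lemma_kronecker} holds for \emph{any} ultrafilter in $\overline{\MI}$ and any $k\in\Rs$, applying it with the ultrafilter $b^{-1}p$ produces $(b^{-1}p)\operatorname{-}\lim_v A_{kv}\phi=V_A\phi$ in the weak topology, which is precisely what the previous paragraph reduced the claim to. Thus the argument is essentially bookkeeping: translate the ideal-indexed limit into a $p$-limit over $bR$, reindex by $u=bv$ to pass from $p$ to $b^{-1}p$, and reuse the additive Kronecker limit of Lemma \ref{lemma_kronecker} verbatim — the substance being carried entirely by Lemmas \ref{lemma_finiteindexip*} and \ref{lemma_upinmi}. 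Note that the multiplicative-idempotent half of the hypothesis $p\in\D$ is not used here; only $p\in\overline{\MI}$ matters.
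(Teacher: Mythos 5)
Your proof is correct and takes essentially the same route as the paper's: establish $bR\in p$ via Lemma \ref{lemma_finiteindexip*}, translate the ideal-indexed limit into a limit along the ultrafilter $b^{-1}p$, show $b^{-1}p\in\overline{\MI}$ via Lemma \ref{lemma_upinmi}, and then invoke the additive part of Lemma \ref{lemma_kronecker}. The only cosmetic difference is that the paper first reduces to $k=1$ using Lemma \ref{lemma_kplim} (and Lemma \ref{lemma_upinmi} applied to $kp$), whereas you absorb $k$ directly by quoting Lemma \ref{lemma_kronecker} with general $k\in\Rs$, which that lemma indeed allows.
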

\begin{proof}
  First observe that the $\pl$ is well defined since the ideal $bR$ has finite index in $R$, $p$ belongs to the closure $\overline{\MI}$ of the additive minimal idempotents and hence, in view of Lemma \ref{lemma_finiteindexip*}, $bR\in p$.

  It follows from Lemma \ref{lemma_kplim} that $\pl_uA_{ku/b}\phi=k\pl_u A_{u/b}\phi$.
  Since, in view of Lemma \ref{lemma_upinmi}, $kp\in\overline{\MI}$, we can and will assume that $k=1$.
  Next, let $q=b^{-1}p$ be the ultrafilter defined so that $E\in q\iff bE\in p$.
  It follows from Lemma \ref{lemma_upinmi} that $q\in\overline{\MI}$.
  Therefore, it follows from Lemma \ref{lemma_kronecker} that for any $\psi\in \mathcal{H}$ and $\eps>0$ the set
  $$E=\{u\in R:\big|\langle A_u\phi-V_A\phi,\psi\rangle\big|<\eps\}\in q$$
  We conclude that
  $$bE=\{u\in bR:\big|\langle A_{u/b}\phi-V_A\phi,\psi\rangle\big|<\eps\}\in p$$
\end{proof}
We can now give a proof of Lemma \ref{lemma_wm}:
\begin{proof}[Proof of Lemma \ref{lemma_wm}]
Let $M_u^T$ denote the adjoint of $M_u$.
Then $\langle A_u\phi^\perp,M_u\psi\rangle=\langle M_u^TA_u\phi^\perp,\psi\rangle$, so the lemma will follow if we show that $\pl M_u^TA_u\phi^\perp=0$ (in the weak topology).
To do this we will use the van der Corput trick (Proposition \ref{lema_pvdc}), and so it suffices to show that
\begin{equation}\label{eq_integers14}
\pl_u\langle M_{ub}^TA_{ub}\phi^\perp,M_u^TA_u\phi^\perp\rangle=0\qquad\forall b\in R\setminus\{-1,0,1\}
\end{equation}
Since the operator $A_u$ is unitary we can rewrite the inner product in (\ref{eq_integers14}) as $\langle M_{ub}^TA_{ub}\phi^\perp,M_u^TA_u\phi^\perp\rangle=\langle A_{-u}M_uM_{ub}^TA_{ub}\phi^\perp,\phi^\perp\rangle$.
By \eqref{eq_distributivityinverted} we have $A_{-u}M_u=M_uA_{-u^2}$ (recall this is an anti-representation).
Also, assuming that $u\in bR$ and evoking Lemma \ref{lemma_integers1} we conclude that
$$\langle M_{ub}^TA_{ub}\phi^\perp,M_u^TA_u\phi^\perp\rangle=\langle M_uM_{ub}^T A_{ub-u/b}\phi^\perp,\phi^\perp\rangle=\langle A_{ub-u/b}\phi^\perp,M_bM_nM_n^T\phi^\perp\rangle$$

By Lemma \ref{lemma_kroneckerideal} we have that $\pl A_{ub-u/b}\phi^\perp=V_A\phi^\perp=0$ in the weak topology.
By Lemma \ref{lemma_integers2} we have that $\pl_u M_bM_uM_u^T\phi^\perp$ exists in the strong topology.
Thus we conclude that $\pl \langle A_{ub-u/b)}\phi^\perp,M_bM_nM_n^T\phi^\perp\rangle=0$, which gives (\ref{eq_integers14}) and finishes the proof.
\end{proof}

\subsection{Proofs of the main results}
We have now gathered all the ingredients necessary for the proofs of the main Theorems of the paper.
We start by proving Theorem \ref{thm_main}:

\begin{proof}[Proof of Theorem \ref{thm_main}]
Let $\phi^\perp=\phi-V_A\phi$, so that $V_A\phi^\perp=0$.
Using Lemmas \ref{lemma_large} and \ref{lemma_wm} we deduce that
$$\pl\langle A_u\phi,M_u\psi\rangle=\pl\langle A_uV_A\phi,M_u\psi\rangle+\langle A_u\phi^\perp,M_u\psi\rangle=\langle V_A\phi,V_M\psi\rangle$$
\end{proof}
As a corollary we now obtain the following:
\begin{theorem}\label{thm_mcentral}
Let $R$ be an \FIID, let $p\in\D$, let $(\Omega,\mu)$ be a probability space, let $(T_g)_{g\in\AR}$ be a measure preserving action of $\AR$ on $\Omega$, let $B\subset \Omega$ be a measurable set and let $\eps>0$.
Then the set
$$R(B,\eps):=\Big\{u\in R:\mu\big(A_u^{-1}B\cap M_u^{-1}B\big)\geq\mu(B)^2-\eps\Big\}$$ is $DC^*$ and, in particular, affinely syndetic.
\end{theorem}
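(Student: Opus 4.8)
The plan is to derive Theorem \ref{thm_mcentral} as a direct consequence of Theorem \ref{thm_main} and Corollary \ref{lemma_positive}, applied to the Koopman anti-representation of the given measure preserving action. First I would set $\mathcal{H}=L^2(\Omega,\mu)$ and define $U_gf=f\circ T_g$ for $f\in\mathcal{H}$ and $g\in\AR$. Since the action is measure preserving, each $U_g$ is an isometry, and the identity $U_gU_h=U_{hg}$ (coming from $T_{hg}=T_hT_g$) shows that $(U_g)_{g\in\AR}$ is an isometric anti-representation, so Theorem \ref{thm_main} is available. Taking $\phi=\psi=\mathbf{1}_B$ and unwinding the definitions, one has for each $u\in\Rs$
$$\langle A_u\mathbf{1}_B,M_u\mathbf{1}_B\rangle=\int \mathbf{1}_{A_u^{-1}B}\,\mathbf{1}_{M_u^{-1}B}\,d\mu=\mu\big(A_u^{-1}B\cap M_u^{-1}B\big),$$
so Theorem \ref{thm_main} gives
$$\pl_u\,\mu\big(A_u^{-1}B\cap M_u^{-1}B\big)=\langle V_A\mathbf{1}_B,V_M\mathbf{1}_B\rangle.$$

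Next I would establish the lower bound $\langle V_A\mathbf{1}_B,V_M\mathbf{1}_B\rangle\geq\mu(B)^2$ by applying Corollary \ref{lemma_positive} with $\phi=\mathbf{1}_B$ and $\psi=\mathbf{1}_\Omega$, the constant function $1$. Because $T_g$ maps $\Omega$ into itself, $U_g\mathbf{1}_\Omega=\mathbf{1}_\Omega$ for every $g\in\AR$, so $\psi$ satisfies the hypothesis of the corollary; moreover $\|\mathbf{1}_\Omega\|^2=\mu(\Omega)=1$ and $\langle\mathbf{1}_B,\mathbf{1}_\Omega\rangle=\mu(B)$. The corollary then yields exactly $\langle V_A\mathbf{1}_B,V_M\mathbf{1}_B\rangle\geq\mu(B)^2$. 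Combining this with the previous paragraph,
$$\pl_u\,\mu\big(A_u^{-1}B\cap M_u^{-1}B\big)\geq\mu(B)^2>\mu(B)^2-\eps.$$

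Finally I would convert this $p$-limit inequality into the desired filter statement. Fix an arbitrary $p\in\D$ and set $L=\langle V_A\mathbf{1}_B,V_M\mathbf{1}_B\rangle$, which we have shown satisfies $L>\mu(B)^2-\eps$. By the defining property of the ultrafilter limit (the sequence takes values in the compact interval $[0,1]$, and $L$ lies in the open set $(\mu(B)^2-\eps,\infty)$), the set $\{u\in R:\mu(A_u^{-1}B\cap M_u^{-1}B)>\mu(B)^2-\eps\}$ belongs to $p$. This set is contained in $R(B,\eps)$, so $R(B,\eps)\in p$ since ultrafilters are closed under supersets. As $p\in\D$ was arbitrary, the characterization of $DC^*$ sets recorded after Definition \ref{def_central} (a set is $DC^*$ iff it lies in every $p\in\D$) shows that $R(B,\eps)$ is $DC^*$, and affine syndeticity then follows from Remark \ref{rmrk_DC*syndetic}. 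I do not anticipate a genuine obstacle here: the entire analytic burden is carried by Theorem \ref{thm_main} and Corollary \ref{lemma_positive}, and the only point needing care is the elementary passage from the $p$-limit inequality to membership of $R(B,\eps)$ in every $p\in\D$.
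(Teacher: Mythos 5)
Your proposal is correct and follows essentially the same route as the paper's own proof: pass to the Koopman anti-representation on $L^2(\Omega,\mu)$, apply Theorem \ref{thm_main} with $\phi=\psi=1_B$, and bound $\langle V_A1_B,V_M1_B\rangle\geq\mu(B)^2$ via Corollary \ref{lemma_positive} with $\psi\equiv1$. The only difference is that you spell out the (elementary) passage from the $p$-limit inequality to $R(B,\eps)\in p$ for every $p\in\D$, which the paper leaves implicit.
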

\begin{proof}
  Let $\mathcal{H}=L^2(\Omega,\mu)$ and, for each $g\in\AR$, define the operator $(U_g\phi)(x)=\phi(T_gx)$. Observe that $U_gU_h=U_{hg}$, so this induces an isometric anti-representation $(U_g)_{g\in\AR}$ of $\AR$ in $\mathcal{H}$.
  Let $B\subset\Omega$. Observe that
  $$1_{T_g^{-1}B}(x)=1\iff T_gx\in B\iff 1_B(T_gx)=1\iff U_g1_B(x)=1$$
  Therefore $\mu(A_u^{-1}B\cap M_u^{-1}B)=\int_\Omega A_u1_B\cdot M_u1_Bd\mu=\langle A_u1_B,M_u1_B\rangle$.
  It follows from Theorem \ref{thm_main} that for any $\epsilon>0$ the set
  $$\big\{u\in R:\langle A_u1_B,M_u1_B\rangle\geq\langle V_A1_B,V_M1_B\rangle-\epsilon\big\}$$
  is $DC^*$.
  Finally, it follows from Corollary \ref{lemma_positive} (applied with $\phi=1_B$ and $\psi\equiv1$) that
  $$\langle V_A1_B,V_M1_B\rangle\geq\mu(B)^2$$
\end{proof}
Observe that Theorem \ref{thm_Rintersection} easily follows from Theorem \ref{thm_mcentral}.
Indeed, given $p\in\D$ it follows from the definition of $DC^*$ sets and Theorem \ref{thm_mcentral} that $R(B_i,\delta)\in p$ for every $i$.
Therefore also the intersection $R=R(B_1,\delta)\cap\cdots\cap R(B_t,\delta)$ belongs to $p$.
Since $p\in\D$ was arbitrary, it follows that $R$ is itself a $DC^*$ set.
Finally, Remark \ref{rmrk_DC*syndetic} implies that $R$ must be affinely syndetic.

We now present the main combinatorial corollary  of Theorem \ref{thm_mcentral}:
\begin{theorem}\label{cor_integer}
  Let $K$ be a countable field and let $R\subset K$ be a sub-ring which is a \FIID.
  Let $E\subset K$ with $\bar d_{(F_N)}(E)>0$ for some double F\o lner sequence $(F_N)$ and let $\epsilon>0$.
  Then the set
  \begin{equation}\label{eq_corollaryintegerset}\Big\{u\in R:\bar d_{(F_N)}\big((E-u)\cap(E/u)\big)>\bar d_{(F_N)}(E)^2-\epsilon\Big\}\end{equation}
  is $DC^*$ and, in particular, affinely syndetic in $R$.
\end{theorem}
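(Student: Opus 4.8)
The plan is to deduce Theorem~\ref{cor_integer} from the ergodic statement in Theorem~\ref{thm_mcentral} by means of a Furstenberg-type correspondence principle adapted to the setting of double F\o lner sequences on a field $K$. First I would set up the dynamical system. Given $E\subset K$ with $\bar d_{(F_N)}(E)>0$, the idea is to realize the combinatorial density as a measure by regarding the indicator $1_E$ as a point in the compact space $\{0,1\}^K$ and letting $\Omega$ be the orbit closure of $1_E$ under the natural affine action of $\AK$ (equivalently, one works in the Stone--\v Cech-type setting via $(F_N)$). One extracts, via a subsequence of $(F_N)$ along which the relevant limits exist, an $\AK$-invariant Borel probability measure $\mu$ on $\Omega$ together with a cylinder set $B=\{\omega\in\Omega:\omega(0)=1\}$ (or the analogous clopen set) satisfying $\mu(B)=\bar d_{(F_N)}(E)$, and more importantly such that for every $g\in\AR$ one has the transfer identity relating $\mu(A_u^{-1}B\cap M_u^{-1}B)$ to $\bar d_{(F_N)}\big((E-u)\cap(E/u)\big)$. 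This is precisely the correspondence principle quoted in the introduction as Theorem~2.8 of \cite{Bergelson_Moreira15}, and I would invoke it essentially verbatim, checking only that it produces a genuine measure preserving action of the \emph{semigroup} $\AR$ (not just the group $\AK$), which it does since the construction only uses the affine maps $A_u$ and $M_u$ as point transformations.

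Once the correspondence is in place the proof is short. The key point is that the correspondence principle is arranged so that, for the distinguished set $B$, we have the pointwise comparison
$$\bar d_{(F_N)}\big((E-u)\cap(E/u)\big)\;\geq\;\mu\big(A_u^{-1}B\cap M_u^{-1}B\big)$$
for every $u\in R$, because the upper density of an intersection dominates the measure of the corresponding clopen set pulled back under the two affine maps. Consequently the set in \eqref{eq_corollaryintegerset} contains the set $R(B,\eps)$ defined in Theorem~\ref{thm_mcentral} with respect to the action $(T_g)_{g\in\AR}$ on $(\Omega,\mu)$. Since Theorem~\ref{thm_mcentral} asserts that $R(B,\eps)$ is $DC^*$, and since any superset of a $DC^*$ set is again $DC^*$ (immediately from the definition, as a $DC^*$ set meets every $DC$ set), the set in \eqref{eq_corollaryintegerset} is $DC^*$. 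Finally, Remark~\ref{rmrk_DC*syndetic} gives that every $DC^*$ set is affinely syndetic, yielding the last assertion.

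I would carry out the steps in this order: (i) build $\Omega$, $\mu$, and $B$ from $E$ and $(F_N)$ via the correspondence principle, verifying $\mu(B)=\bar d_{(F_N)}(E)$ and the invariance of $\mu$ under $\AR$; (ii) establish the density-dominates-measure inequality for the two pullbacks simultaneously; (iii) apply Theorem~\ref{thm_mcentral} to the constructed system; (iv) conclude via the superset-stability of $DC^*$ together with Remark~\ref{rmrk_DC*syndetic}.

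The step I expect to be the main obstacle is (i), specifically verifying that the correspondence principle of \cite{Bergelson_Moreira15} goes through for the affine \emph{semigroup} $\AR$ of a subring $R\subset K$ rather than for the full affine \emph{group} $\AK$. In \cite{Bergelson_Moreira15} the construction is tailored to the amenable group $\AK$ and leans on averaging along the double F\o lner sequence, whereas here the transformations $M_u$ with $u\in R$ need not be invertible on $K$ and the relevant semigroup is non-amenable. The delicate point is to produce a single invariant measure $\mu$ for which the density identity holds for \emph{both} the additive shifts $A_u$ and the multiplicative maps $M_u$ at once, uniformly over $u\in R$. I would handle this by choosing the orbit closure under the group $\AK$ (which \emph{is} amenable, so the construction of \cite{Bergelson_Moreira15} applies directly) and then simply restricting attention to the sub-semigroup $\AR\subset\AK$ of affine maps whose linear and translation parts lie in $R$; since $R\subset K$, every such map is a genuine affine transformation of $K$ and acts on $\Omega$, so the restriction of the $\AK$-invariant measure is automatically $\AR$-invariant. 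This reduction sidesteps the non-amenability issue entirely and lets me quote the correspondence principle without modification.
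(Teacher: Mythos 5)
Your proposal matches the paper's proof: the paper likewise invokes the correspondence principle (Theorem 2.8 of \cite{Bergelson_Moreira15}) for the amenable group $\AK$ to obtain an action, a set $B$ with $\mu(B)=\bar d_{(F_N)}(E)$, and the inequality $\bar d_{(F_N)}\big((E-u)\cap(E/u)\big)\geq\mu(A_u^{-1}B\cap M_u^{-1}B)$, then applies Theorem \ref{thm_mcentral} to the restriction of that action to the sub-semigroup $\AR$. Your step (iv) and your resolution of the group-versus-semigroup issue (restricting the $\AK$-action to $\AR\subset\AK$) are exactly the implicit content of the paper's two-line argument, so the proposal is correct and essentially identical in approach.
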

\begin{proof}
  Using the correspondence principle (Theorem 2.8 in \cite{Bergelson_Moreira15}) one can construct a measure preserving action $(T_g)_{g\in\AK}$ of $\AK$ on a probability space $(\Omega,{\mathcal B},\mu)$ and a set $B\in{\mathcal B}$ such that $\mu(B)=\bar d_{(F_N)}(E)$ and, for each $u\in K^*$
  $$\bar d_{(F_N)}\big((E-u)\cap(E/u)\big)\geq\mu(A_u^{-1}B\cap M_u^{-1}B)$$
  The result now follows from Theorem \ref{thm_mcentral}.
\end{proof}

One can deduce parts (2) and (3) of Theorem \ref{cor_introinteger} from Theorem \ref{cor_integer} using the fact that for any finite partition of a countable field, one of the cells of the partition has positive upper density with respect to a double F\o lner sequence.
Then using that cell $C_i$ of the partition as $E$, for any element $n$ of the (non-empty) set defined in \eqref{eq_corollaryintegerset} and for any $x$ in the (non-empty) intersection $(C_i-n)\cap(C_i/n)$ we have $\{x+n,xn\}\subset C_i$.

To deduce part (1) of Theorem \ref{cor_introinteger}, one needs an additional fact:
\begin{proposition}\label{prop_Nismultcentral*}
  The subset $\N$ of the ring $\Z$ belongs to every non-principal multiplicative idempotent.
\end{proposition}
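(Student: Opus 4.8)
The plan is to exploit the fact that the sign map $\sigma:\Z\setminus\{0\}\to\{1,-1\}$, $\sigma(u)=u/|u|$, is a homomorphism of multiplicative semigroups onto the two-element group $\{1,-1\}$, whose \emph{only} idempotent is the identity $1$. Since $\N=\sigma^{-1}(1)$, one expects every multiplicative idempotent to ``see'' the identity of $\{1,-1\}$ and hence to contain $\N$. I will carry this out by a direct computation rather than by appealing to an extension-of-homomorphisms result for $\beta(\cdot)$, so as to keep the argument self-contained.

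First I note that $\Z\setminus\{0\}=\N\cup(-\N)$ is a disjoint union, so by the defining property of ultrafilters exactly one of $\N$, $-\N$ belongs to $p$. I argue by contradiction, assuming that $\N\notin p$, i.e. $-\N\in p$. The key step is to compute the preimages $M_u^{-1}(-\N)=\{v\in\Z\setminus\{0\}:uv<0\}$: for $u\in\N$ one has $M_u^{-1}(-\N)=-\N$, while for $u\in-\N$ one has $M_u^{-1}(-\N)=\N$. Since $p$ is a multiplicative idempotent, the relation $-\N\in p=pp$ means, by the definition \eqref{eq_ultraproduct} of the ultrafilter product, that the set $\{u:M_u^{-1}(-\N)\in p\}$ belongs to $p$. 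Using the two computations above together with the assumptions $-\N\in p$ and $\N\notin p$, this set is exactly $\N$. Hence $\N\in p$, contradicting our assumption, and therefore $\N\in p$.

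The argument requires neither the non-principal hypothesis nor any minimality: it holds for every multiplicative idempotent (the only principal one being $p_1$, which contains $\N$ trivially because $1\in\N$). I expect essentially no serious obstacle here; the only point demanding care is the bookkeeping in the preimage computation and a correct unravelling of the ultrafilter product \eqref{eq_ultraproduct}. An alternative, more conceptual route would invoke the continuous extension $\beta\sigma:\beta(\Z\setminus\{0\})\to\{1,-1\}$ of $\sigma$, which is a semigroup homomorphism since its target is finite; then $\beta\sigma(p)$ is an idempotent of the group $\{1,-1\}$, hence equals $1$, which is precisely the statement that $\N=\sigma^{-1}(1)\in p$. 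I would nonetheless prefer the direct computation, as it avoids citing the (standard but here unstated) fact that such extensions are homomorphisms.
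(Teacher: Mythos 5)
Your proof is correct and is essentially the paper's own argument: assume $\N\notin p$, so $-\N\in p=pp$, unfold the product \eqref{eq_ultraproduct}, compute $M_u^{-1}(-\N)$ according to the sign of $u$, and conclude that $\{u\in\Z:M_u^{-1}(-\N)\in p\}=\N$ must lie in $p$, a contradiction. One caveat: your parenthetical claim that the non-principal hypothesis is dispensable is false in the paper's setting $p\in\beta\Z$, since the principal ultrafilter at $0$ is a multiplicative idempotent (as the paper notes) that does not contain $\N$, and indeed your very first step, that exactly one of $\N$, $-\N$ lies in $p$, already uses $\{0\}\notin p$; the hypothesis is needed precisely to exclude that ultrafilter.
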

\begin{proof}
  Let $p\in\beta\Z$ be a non-principal multiplicative idempotent.
  Assume, for the sake of a contradiction, that $\N\notin p$.
  Then $-\N\in p=pp$, which by definition implies that $\{n\in\Z^*:-\N/n\in p\}\in p$.
  Observe that
  $$-\N/n=\{a\in\Z^*:an\in-\N\}=\left\{\begin{array}{rl}\N&\text{ if }n\in-\N\\-\N&\text{ if }n\in\N \end{array}\right.$$
  Therefore $\{n\in\Z^*:-\N/n\in p\}=\N\notin p$, which is the desired contradiction.
\end{proof}

To deduce part (1) of Theorem \ref{cor_introinteger} one applies Theorem \ref{cor_integer} with $K=\Q$, $R=\Z$ and $E$ being a cell of the partition with positive upper density with respect to a double F\o lner sequence.
The set $S$ defined by \eqref{eq_corollaryintegerset} is $DC^*$ in $\Z$, which means that for any $p\in\D$ we have $S\in p$.
Since any $p\in\D$ is a non-principal multiplicative idempotent, it follows from Proposition \ref{prop_Nismultcentral*} that also $\N\in p$, and therefore $S\cap\N\in p$ and hence is non-empty.
For any $n$ in that intersection the set $(E-n)\cap(E/n)$ is non-empty and any $x$ in this intersection yields $\{x+n,xn\}\subset E$.

\section{Notions of largeness and configurations $\{xy,x+y\}$ in $\N$}\label{sec_misc}
In this section we discuss notions of largeness which guarantee the presence of configurations of the form $\{x+y,xy\}$.

It is a trivial observation that the set of odd numbers in $\N$ or in $\Z$ does not contain pairs $\{x+y,xy\}$.
Therefore, additively syndetic sets (i.e. sets which are syndetic with respect to the additive semigroup) do not contain, in general, configurations $\{x+y,xy\}$.
It is thus somewhat surprising that multiplicatively syndetic subsets in any integral domain do contain such patterns:
\begin{theorem}
  Let $R$ be an infinite countable integral domain and let $S\subset R^*$ be multiplicatively syndetic (i.e. syndetic as a subset of the semigroup $(R^*,\cdot)$).
  Then $S$ contains (many) pairs of the form $\{x+y,xy\}$.
\end{theorem}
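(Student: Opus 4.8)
The plan is to convert multiplicative syndeticity into a statement about multiplicative idempotent ultrafilters, to use Lemma \ref{lemma_centralIP0} to manufacture additive (finite-sum) structure, and then to splice the additive and multiplicative data together to exhibit the pattern. The reduction to a single multiplicatively central set goes as follows. Since $S$ is syndetic in the semigroup $(R^*,\cdot)$, the finite cover $R^*=\bigcup_{a\in F}aS$ shows that for every $q\in\beta(R^*)$ we have $R^*\in q$, hence $aS\in q$ for some $a\in F$; consequently $\overline S$ meets every minimal left ideal of the compact right-topological semigroup $(\beta(R^*),\cdot)$. Fixing a multiplicative minimal idempotent $p\in\mathcal{MMI}$ (which exists in each minimal left ideal by Lemma \ref{lemma_ellis}) and a point $q$ in its minimal left ideal with $S\in q$, one has $qp=q$; unfolding the definition \eqref{eq_ultraproduct} of the product then produces an element $u_0\in S$ with $B:=\{v\in R^*:u_0v\in S\}=M_{u_0}^{-1}S\in p$. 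Thus $u_0B\subseteq S$, one has $1\in B$ (because $u_0\in S$), and $B$ is multiplicatively central, so Lemma \ref{lemma_centralIP0} applies to it.

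The second step is to feed $B$ into Lemma \ref{lemma_centralIP0} to obtain, for every $r$, a set $Z$ of size $r$ with $FS(Z)\subseteq B$, while simultaneously retaining the multiplicative idempotent structure of $B$. Since $p$ is idempotent, the Galvin--Glazer refinement $B^\star=\{v\in B:M_v^{-1}B\in p\}$ is again a member of $p$ and its finite products return to $B$; applying Lemma \ref{lemma_centralIP0} to $B^\star$ (and to suitable members of the form $B^\star\cap M_{u_0}^{-1}B\in p$) should yield generators carrying an additive relation $z_1,z_2,z_1+z_2\in B$ together with a multiplicative relation of the shape $u_0z_1z_2\in B$. Granting both, I would set $x=u_0z_1$ and $y=u_0z_2$, so that $x+y=u_0(z_1+z_2)\in u_0B\subseteq S$ and $xy=u_0^2z_1z_2=u_0\cdot(u_0z_1z_2)\in u_0B\subseteq S$, giving $\{x+y,xy\}\subseteq S$. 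Letting $r$ grow (or varying the idempotent) then produces infinitely many essentially distinct such pairs, which accounts for the word \emph{many}.

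The hard part is exactly the compatibility demanded in the second step. The multiplicative idempotent $p$ controls only \emph{products}: additive shifts such as $B-1$ or $S-u_0$ are invisible to it, so every additive relation I use has to be extracted \emph{inside} a member of $p$ by means of Lemma \ref{lemma_centralIP0}, and it must coexist with the multiplicative requirement $u_0z_1z_2\in B$. Moreover the scaling factor $u_0$ enters the sum linearly but the product quadratically, so the two membership conditions $z_1+z_2\in B$ and $u_0z_1z_2\in B$ pull in different directions. Reconciling them is the technical crux: it appears to force one to select $u_0$ itself inside the good set $B^\star$ (so that $M_{u_0}^{-1}B\in p$) and to run the finite-sum extraction of Lemma \ref{lemma_centralIP0} inside $B\cap M_{u_0}^{-1}B\in p$. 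This two-level idempotent bookkeeping, which must simultaneously honour an additive and a multiplicative constraint through the scaling by $u_0$, is where I expect the main difficulty of the proof to lie.
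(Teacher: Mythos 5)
Your first step is sound: syndeticity of $S$ does yield a minimal left ideal $L$ of $(\beta(R^*),\cdot)$ meeting $\overline{S}$, an idempotent $p\in L$, and (since $qp=q$ for every $q\in L$) an element $u_0\in S$ with $B=M_{u_0}^{-1}S\in p$. But the second step, which you yourself flag as the crux, is a genuine gap, and it is not one that the tools you cite can fill. After your reduction, what you need is a pair $z_1,z_2$ whose sum and (scaled) product both land in a set $D=B\cap M_{u_0}^{-1}B$ about which you only know that it belongs to the multiplicative minimal idempotent $p$, i.e.\ that it is multiplicatively central. That is literally the assertion that the multiplicatively central set $D$ contains an $\{x+y,xy\}$ pattern. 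Lemma \ref{lemma_centralIP0} cannot deliver this: it produces finite-sums sets $FS(Z)\subset D$, hence controls $z_1,z_2,z_1+z_2$, but says nothing whatsoever about the product $z_1z_2$ of the generators; dually, the Galvin--Glazer set $B^\star$ controls finite products of a sequence but not their sums. No device in the paper bridges the two, and for good reason: the statement you have reduced to is \emph{false}. By Theorem \ref{thm_thickbad} there is a multiplicatively thick --- hence multiplicatively central, since in any countable semigroup every thick set belongs to a minimal idempotent --- subset of $\N$ containing no pair $\{x+y,xy\}$ with $x,y>2$. Membership in a multiplicative minimal idempotent is simply too weak a hypothesis; your reduction discards the syndeticity of $S$ (after step one you retain only $u_0B\subseteq S$ and the centrality of $B$), and syndeticity is exactly what cannot be discarded. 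Even for sets that are simultaneously additively and multiplicatively central, the desired conclusion is the open Conjecture \ref{conj_dc}.

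The paper's proof takes a completely different and elementary route, in which syndeticity is used twice. Write $R^*=\bigcup_{n\in F}S/n$ with $F$ finite. First, viewing this cover as a finite coloring of $R^*$, the affine van der Waerden theorem (obtainable from the Hales--Jewett theorem, cf.\ \cite{Bergelson_Johnson_Moreira15}) produces many $a,b\in R^*$ with $a+bF\subset S$. Second, applying the cover once more to the single element $ab$, there is some $n\in F$ with $abn\in S$. Then $x=a$, $y=bn$ gives $x+y=a+bn\in a+bF\subset S$ and $xy=a(bn)=abn\in S$. The idea your proposal is missing is this double use of the finite set $F$: its elements serve simultaneously as the scales of the monochromatic van der Waerden configuration and as the dilations witnessing syndeticity, so whichever $n\in F$ repairs the product automatically keeps the sum inside $S$. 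That is what reconciles the additive and multiplicative constraints which your idempotent bookkeeping cannot.
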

\begin{proof}
  Let $F\subset R^*$ be a finite set such that $R^*=\bigcup_{n\in F} S/n$ (the existence of such $F$ is equivalent, by definition, to the statement that $S$ is multiplicatively syndetic).
  Thus $R^*$ is finitely partitioned into multiplicative shifts of $S$ and hence that there exist (many) $a,b\in R^*$ such that $a+bF\subset S$
  \footnote{This is a well known extension of van der Waerden's theorem in arithmetic progressions. One way to prove this is to apply the Hales-Jewett theorem, as in the proof of Proposition 4.4 in \cite{Bergelson_Johnson_Moreira15}, where a stronger statement is proved.}.
  Since $ab\in R^*=\bigcup_{n\in F} S/n$, there exist some $n\in F$ such that $abn\in S$.
  We conclude that $\big\{a+bn,a(bn)\big\}\subset S$ as desired.
\end{proof}

While it is not hard to see that there exist partitions of $\N$ or $\Z$ with none of the cells of the partition being multiplicatively syndetic, it is a classical fact that for any finite partition of a semigroup, one of the cells is piecewise syndetic\footnote{A subset $E$ of a commutative semigroup is called \emph{piecewise syndetic} if it is the intersection of a syndetic set and a thick set.}.
One could then hope that any multiplicatively piecewise syndetic subset of $R^*$ contains a pattern $\{x+y,xy\}$.
Unfortunately, the next example refute this assertion.

\begin{theorem}\label{thm_thickbad}
There exists a set $E\subset\N$ which is additively thick and multiplicatively thick (and so, in particular, $E$ is a multiplicatively piecewise syndetic subset of $\N$) but does not contain a pair $\{x+y,xy\}$ with $x,y>2$.
\end{theorem}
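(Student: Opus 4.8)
The plan is to construct $E$ as a union $E=\bigcup_{n\ge1}B_n$ of finite blocks that are spread out extremely fast, interleaving two types of blocks in the spirit of Example \ref{example_thick}: \emph{additive blocks}, which are intervals $[a,2a]\cap\N$ used to force additive thickness, and \emph{multiplicative blocks} of the form $\{g,2g,\dots,kg\}$ used to force multiplicative thickness. The whole argument rests on one elementary observation about the pattern itself: if $x,y>2$ and we set $s=x+y$ and $P=xy$, then $(x-1)(y-1)\ge4$ gives $P>s$, while the extreme cases $y=3$ and $x=y$ yield the two-sided bound
\[3s-9\le P\le \tfrac{s^2}{4}.\]
Thus within any pattern the larger element $P$ is at least (roughly) three times the smaller element $s$, but never exceeds $s^2/4$. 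I would use the lower bound to kill patterns inside a single block and the upper bound to kill patterns straddling two blocks.

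Next I would check that each individual block is free of patterns with $x,y>2$. For an additive block $[a,2a]$ with $a\ge10$: if $s$ and $P$ both lay in $[a,2a]$ then $P-s\ge 2s-9\ge 2a-9>a$, contradicting $P-s\le 2a-a=a$; so short intervals (span factor $2$) placed far out are pattern-free while still having length $a\to\infty$. The delicate case --- and the main obstacle --- is the multiplicative block, since it spans a large multiplicative factor $k$ and could a priori contain a pattern. Here I would choose $g$ to be a \emph{prime} with $g>k$. If $s=ig$ and $P=jg$ (with $1\le i<j\le k$) formed a pattern, then $g\mid xy$ forces $g\mid x$ or $g\mid y$; combined with $g\mid x+y$ this gives $g\mid x$ and $g\mid y$, hence $x,y\ge g$ and $P=xy\ge g^2$, so $j=P/g\ge g>k$, contradicting $j\le k$. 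This primality trick is what makes the multiplicative blocks safe, and is the only genuinely non-routine ingredient.

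Finally I would rule out straddling patterns, where $s$ lies in a block $B_m$ and $P$ in a later block $B_n$ (necessarily $n>m$, since $P>s$ and the blocks are increasing). Using $P\le s^2/4\le(\max B_m)^2/4$, it suffices to impose the separation condition $\min B_{n+1}>(\max B_n)^2/4$ for every $n$: then $P<\min B_{m+1}\le\min B_n$, contradicting $P\in B_n$. The set is then assembled recursively. Having built $B_1,\dots,B_n$, I place $B_{n+1}$ far enough to the right to satisfy the separation condition: if it is to be an additive block I take $[a,2a]$ with $a>\max\{10,(\max B_n)^2/4\}$ and with $a$ also exceeding a prescribed length bound $L_{n+1}\to\infty$; if it is to be a multiplicative block I take $\{g,2g,\dots,k_{n+1}g\}$ with $k_{n+1}\to\infty$ and $g$ a prime exceeding both $k_{n+1}$ and $(\max B_n)^2/4$ (such primes exist by Euclid). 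Alternating the two types and letting the length and multiplicative parameters tend to infinity yields, respectively, additive and multiplicative thickness, while the separation condition together with the per-block analysis guarantees that $E$ contains no pair $\{x+y,xy\}$ with $x,y>2$. The asserted multiplicative piecewise syndeticity is then automatic, since every thick set is piecewise syndetic.
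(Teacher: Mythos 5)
Your proposal is correct and follows essentially the same route as the paper: the paper's set is likewise an alternating union of prime-generated multiplicative blocks $p_N\{1,\dots,N\}$ (with $N<p_N$) and short intervals of span factor about $2$, separated super-quadratically so that the bound $xy\le (x+y)^2/4$ kills straddling patterns, with primality handling the multiplicative blocks and the linear gap $xy\ge 2(x+y)-4$ handling the intervals. The only cosmetic difference is inside the multiplicative blocks: you argue that $g$ must divide both $x$ and $y$, making $xy\ge g^2$ too large for the block, whereas the paper argues that $x+y$ then fails to be a multiple of $p_N$; both hinge on exactly the same primality trick.
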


\begin{proof}
  Let $(p_N)$ be a sequence of primes such that $p_1=5$ and, for each $N\in\N$, we have $p_{N+1}>4(Np_N)^4$.
For each $N\in\N$, let
$$E_{2N-1}=p_N[1,N]\qquad\text{ and }\qquad E_{2N}=[(Np_N)^2+1,2(Np_N)^2-3]$$
where we use the notation $[a,b]$ to denote the set $\{a,a+1,\dots,b\}$.
Let $E=\bigcup E_N$.
It follows directly from the construction that $E$ is additively thick as a subset of either $\N$ or $\Z$ and is multiplicatively thick as a subset of $\N$.
Moreover, $E\cup(-E)$ is a multiplicatively thick subset of $\Z^*$.
Since $\N$ is a multiplicatively syndetic subset of $\Z^*$, it follows that $E$ is a multiplicative piecewise syndetic subset of $\Z^*$.

We first show that no set $E_{2N}$ contains a pair $\{x+y,xy\}$: assume that $a=x+y\in E_{2N}$ and $x,y\geq2$.
Let $b=xy$.
Then $b\geq2(a-2)\geq 2[(Np_N)^2+1-2]=2(Np_N)^2-2$, so $b$ is too large to be in $E_{2N}$.

Next we show that no set $E_{2N-1}$ contains such a pair.
Assume $xy\in E_{2N-1}$, say $xy=np_N$, then without loss of generality we have $x=p_Nd$ and $y=n/d$ for some divisor $d$ of $n$.
But then $x+y<p_N(d+1)$ because $n/d\leq N<p_N$.
Hence $x+y\notin E_{2N-1}$.

For each $N\in\N$ we have $(\max E_{2N-1})^2=(Np_N)^2<(Np_N)^2+1=\min E_{2N}$ and $(\max E_{2N})^2=(2(Np_N)^2-3)^2<4(Np_N)^4<p_{N+1}=\min E_{2N+1}$.
Fix a pair $x,y\in\N$ with both $x,y\geq2$, let $a=xy$ and $b=x+y$.
We observe that $b\leq a\leq(b/2)^2$.

If $b\in E$, say $b\in E_n$, then $\min E_n\leq b\leq a\leq (b/2)^2<[(\max E_n)/2]^2<\min E_{n+1}$ so $a$ can not be in $E_m$ for any $m\neq n$.
Since we already showed that $a\notin E_n$ (otherwise $E_n$ would contain $\{b,a\}=\{x+y,xy\}$), we conclude that $a\notin E$ and this finishes the proof.
\end{proof}

We observe that the complement $\tilde E=\N\setminus E$ of the set constructed in Theorem \ref{thm_thickbad} is also rather large.
In particular $\bar d(\tilde E)=1$, where, as usual, for a subset $S\subset\N$, $\bar d(S)$ denotes the upper density,
$$\bar d(S)=\limsup_{N\to\infty}\frac{\big|S\cap\{1,\dots,N\}\big|}N.$$
The next result shows that sets having upper density $1$ are large not only additively, but also multiplicatively.
\begin{theorem}\label{thm_naturalthick}
  Let $E\subset\N$ satisfy $\bar d(E)=1$.
  Then $E$ is affinely thick.
\end{theorem}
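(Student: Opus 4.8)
The plan is to verify the defining property of affine thickness directly, by a first-moment counting argument that exploits the fact that $\bar d(E)=1$ forces the complement $\N\setminus E$ to be sparse along a suitable subsequence of scales. Since $E\subset\N$, the only affine transformations that are relevant are those preserving $\N$, i.e. maps of the form $g_i(x)=u_ix+v_i$ with $u_i\in\N$ and $v_i\in\N\cup\{0\}$. Thus, fixing a finite family $g_1,\dots,g_k$ of such maps, it suffices to produce a single $x\in\N$ with $g_i(x)\in E$ for every $i$.

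First I would record the elementary reformulation of the hypothesis: $\bar d(E)=1$ is equivalent to $\liminf_{M\to\infty}\tfrac1M\,\big|(\N\setminus E)\cap[1,M]\big|=0$, so there is a sequence $M_j\to\infty$ along which the density of the complement tends to $0$. Next, set $u=\max_i u_i$ and $v=\max_i v_i$, and call $x\in[1,N]$ \emph{bad} if $g_i(x)\in\N\setminus E$ for some $i$. For each fixed $i$, since $u_i\neq0$ the map $g_i$ is injective on $[1,N]$ and sends it into $[1,uN+v]$; hence the set $\{x\in[1,N]:g_i(x)\in\N\setminus E\}$ injects via $g_i$ into $(\N\setminus E)\cap[1,uN+v]$, so its cardinality is at most $\big|(\N\setminus E)\cap[1,uN+v]\big|$. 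Summing over the $k$ maps bounds the number of bad $x$ in $[1,N]$ by $k\,\big|(\N\setminus E)\cap[1,uN+v]\big|$.

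Finally I would choose the scale to match the good subsequence. Putting $N_j=\lfloor (M_j-v)/u\rfloor$ guarantees $uN_j+v\le M_j$ and $N_j\to\infty$ with $N_j\sim M_j/u$, so the number of bad $x$ in $[1,N_j]$ is at most $k\,\big|(\N\setminus E)\cap[1,M_j]\big|=o(M_j)=o(N_j)$. For $j$ large this count is strictly less than $N_j$, so $[1,N_j]$ contains a point $x$ that is not bad; this $x$ satisfies $g_i(x)\in E$ for all $i$, and since $g_i(x)=u_ix+v_i\ge u_i\ge 1$ it indeed lies in $\N$. As the finite family was arbitrary, $E$ is affinely thick.

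The argument is essentially a union bound, so there is no deep obstacle; the only point that demands care is the coupling between the window $[1,uN+v]$ in which all the images $g_i([1,N])$ live and the subsequence $M_j$ along which the complement is sparse. One must choose $N_j$ so that $uN_j+v$ stays inside $[1,M_j]$ while keeping $N_j$ of the same order as $M_j$, since it is precisely this proportionality $M_j\sim uN_j$ that lets the $o(M_j)$ estimate beat the trivial threshold $N_j$ and thereby guarantee a non-bad point.
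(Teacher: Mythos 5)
Your proof is correct, but it follows a genuinely different route from the paper's. The paper argues structurally: it first establishes that density-one sets are stable under the operations $E\mapsto (E-n)\cap E$ and $E\mapsto (E/n)\cap E$ (the multiplicative statement is the ``claim'' there, proved by a single-scale inclusion-exclusion count), and then, given maps $g_i\colon x\mapsto a_ix+b_i$, it nests these operations to build a chain $E=E_0\supset E_1\supset\dots\supset E_k$ of density-one sets such that every $x\in E_k$ satisfies $g_i(x)\in E$ for all $i$. You instead run one first-moment/union-bound argument on the complement: each $g_i$ is injective and maps $[1,N]$ into $[1,uN+v]$, so the bad points of $[1,N]$ number at most $k\,\big|(\N\setminus E)\cap[1,uN+v]\big|$, and your choice $N_j=\lfloor (M_j-v)/u\rfloor$ couples the window to a scale $M_j$ where the complement is sparse, making the bad count $o(N_j)$. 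That choice-of-scale step is precisely where the $\limsup$ nature of $\bar d$ must be confronted (two density-one sets need not have their good scales aligned), and you handle it correctly; it plays the role that the fixed-scale counting computation plays in the paper's proof of its claim. As for what each approach buys: yours is shorter, treats the additive and multiplicative parts of the affine maps in a single stroke rather than by iteration, and gives the quantitative bonus that all but $o(N_j)$ of $[1,N_j]$ are simultaneous witnesses; the paper's route isolates the reusable intersection facts $\bar d\big((E-n)\cap E\big)=\bar d\big((E/n)\cap E\big)=1$, which fit the intersective spirit of its other results.
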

\begin{proof}
  Since $\bar d$ is the upper density with respect to an additive F\o lner sequence, it is not hard to see that $\bar d\big((E-n)\cap E)=1$ for any $n\in\N$.
  We claim that also $\bar d\big((E/n)\cap E)=1$ for any $n\in\N$.

  Assuming the claim for now, let $F=\{g_1,\dots,g_k\}\subset\AN$ be an arbitrary finite set.
  We can write each $g_i$ as the map $g_i:x\mapsto a_ix+b_i$.
  Let $E_0=E$ and, for each $i=1,\dots,k$, let $A_i=\big((E_{i-1}-b_i)\cap E_{i-1}\big)$ and $E_i=\big((A_i/a_i)\cap A_i)$.
  It follows by induction that each of the sets $E_i$, $A_i$ satisfies $\bar d(E_i)=\bar d(A_i)=1$.
  Take $x\in E_k$, we will show that $g_i(x)\in E$ for every $i$.
  Indeed, $x\in E_k\subset E_i=\big((A_i/a_i)\cap A_i)$, so $a_ix\in A_i=\big((E_{i-1}-b_i)\cap E_{i-1}\big)$ and hence $a_ix+b_i=g_i(x)\in E_{i-1}\subset E$ as desired.

  Now we prove the claim.
  We will write $[1,x]$ to denote the set $\{1,2,\dots,\lfloor x\rfloor\}$, where $\lfloor x\rfloor$ is the largest integer no bigger than $x$.

  Let $n\in\N$ and take $\epsilon>0$ arbitrary.
  For some arbitrarily large $N\in\N$ we have
  $$|E\cap[1,N]|>\left(1-\frac\epsilon{2n}\right)N=N-\frac{\epsilon N}{2n}$$
  This implies that
  $$|nE\cap[1,N]|=\bigg|E\cap\left[1,\tfrac Nn\right]\bigg|>\frac Nn-\frac{\epsilon N}{2n}$$
  Using the general fact that $|X\cup Y|+|X\cap Y|=|X|+|Y|$ we deduce that $nE\cap E\cap[1,N]=(nE\cap[1,N])\cap(E\cap[1,N])$ has cardinality
  \begin{eqnarray*}
    \displaystyle \big|nE\cap E\cap[1,N]\big|&=&\displaystyle \big|E\cap[1,N]\big|+\big|nE\cap[1,N]\big|-\Big|\big(nE\cap[1,N]\big)\cup\big(E\cap[1,N]\big)\Big|\\&\geq& \displaystyle \quad N-\frac{\epsilon N}{2n}\ \ +\ \ \frac Nn-\frac{\epsilon N}{2n}\ \ -\qquad N\\&=&\displaystyle \frac Nn(1-\epsilon)
  \end{eqnarray*}
  Dividing by $n$ (and observing that every number in the intersection $nE\cap E\cap[1,N]$ is divisible by $n$) we deduce that
  $$|E\cap(E/n)\cap[1,N/n]|=|nE\cap E\cap[1,N]|\geq\frac Nn(1-\epsilon)$$
  As $N$ can be taken arbitrarily large and $\epsilon$ arbitrarily small we conclude that $\bar d\big(E\cap(E/n)\big)=1$, proving the claim.
\end{proof}

It is clear that, for any $y\in\N$, any affinely thick set contains configurations of the form $\{x+y,xy\}$.
 This observation applies, in particular, to the complement $\tilde E$ of the set $E$ constructed in Theorem \ref{thm_thickbad}.

Recall now the notion of $DC$ set (see Definition \ref{def_central}) and observe that for any finite partition of $\N$ one of the cells is a $DC$ set.
It follows from (the proof of) \cite[Corollary 5.5]{Bergelson_Hindman90} that any $DC$ set is both additively piecewise syndetic and multiplicatively piecewise syndetic.
For a partition of $\N$ into two cells, one has the following dichotomy: either one of the cells has upper density $1$ (in which case Theorem \ref{thm_naturalthick} assures us that it contains configurations $\{x+y,xy\}$) or both cells have positive lower density.
In view of this observation we make the following conjecture:
\begin{conjecture}\label{conj_thick}
  Let $E\subset\N$ be additively and multiplicatively piecewise syndetic and have positive lower density.
  Then $E$ contains many configurations of the form $\{x+y,xy\}$.
\end{conjecture}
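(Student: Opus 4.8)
The plan is to reduce the conjecture to the density result already available over the field $\Q$. Corollary \ref{cor_intro4}, and more quantitatively Theorem \ref{cor_integer} with $K=\Q$ and $R=\Z$, guarantees that \emph{any} set of positive upper density with respect to some double F\o lner sequence in $\Q$ contains many pairs $\{x+y,xy\}$. Thus it would suffice to show that a set $E\subset\N$ satisfying the hypotheses of the conjecture---additive and multiplicative piecewise syndeticity together with positive lower density---has positive upper density with respect to \emph{some} double F\o lner sequence in $\Q$. Once such a sequence $(F_N)$ is produced, one applies Theorem \ref{cor_integer} to the cell $E$; invoking Proposition \ref{prop_Nismultcentral*} exactly as in the deduction of part (1) of Theorem \ref{cor_introinteger}, the return-time set in \eqref{eq_corollaryintegerset} meets $\N$, and for $n$ in that set any $x\in(E-n)\cap(E/n)$ yields $\{x+n,xn\}\subset E$.

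First I would try to manufacture the required double F\o lner sequence by interleaving additive and multiplicative F\o lner blocks. By Theorem \ref{thm_syndensity}, exhibiting a double F\o lner sequence along which $E$ has positive density is equivalent to showing that $E$ is affinely syndetic in $\Q$, i.e. that $E$ meets every affinely thick subset of $\Q$. The additive piecewise syndeticity should supply arbitrarily long additive F\o lner windows of $(\Q,+)$ on which $E$ is additively syndetic, and the multiplicative piecewise syndeticity should supply multiplicative windows on which $E$ is multiplicatively syndetic; the role of the positive lower density is precisely to prevent the mass of $E$ from escaping to infinity between these windows, as it does for the set constructed in Theorem \ref{thm_thickbad} (there the complement has density $1$, so $E$ itself has lower density $0$). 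The aim of this step is to splice the two families of windows into a single sequence $(F_N)$ that is asymptotically invariant under both $x\mapsto x+u$ and $x\mapsto ux$ while retaining a uniform lower bound $|E\cap F_N|/|F_N|\geq c>0$.

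The hard part will be exactly this splicing, and it is where I expect the argument to stall---which is why the statement is only conjectural. A double F\o lner sequence in $\Q$ must be simultaneously asymptotically invariant under all additive \emph{and} all multiplicative transformations, and these two invariances live at incompatible scales: additive windows come from additive F\o lner sets of $(\Q,+)$, whereas multiplicative windows are dilated copies $uG$ spread across multiplicatively separated scales. The additive and multiplicative witnesses of piecewise syndeticity for $E$ need not be aligned, and there is no general mechanism forcing a set of positive additive lower density in $\N$ to remain dense once one passes to a genuinely \emph{double} F\o lner average in $\Q$. An equivalent reformulation of the same obstacle is to seek a single ultrafilter $p\in\D=\overline{\MI}\cap\mathcal{MMI}$ with $E\in p$: Theorem \ref{thm_main} would then deliver $\pl_u\mu\big(A_u^{-1}B\cap M_u^{-1}B\big)\geq\mu(B)^2>0$ through the correspondence principle, forcing the pattern. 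However, membership of $E$ in an ultrafilter of $\D$ is strictly stronger than the hypotheses: being additively and multiplicatively piecewise syndetic only places $\overline{E}$ in the minimal ideals of $(\beta\Z,+)$ and $(\beta\Z,\cdot)$ \emph{separately}, not in a common idempotent lying in $\D$. Converting the positive-lower-density hypothesis into either the double-F\o lner positivity of the first approach or the single-ultrafilter property of the second is the crux on which the conjecture hinges.
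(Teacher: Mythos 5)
This statement is Conjecture \ref{conj_thick}: the paper offers no proof of it, so there is nothing to compare your attempt against on the paper's side; it is posed precisely because the authors' machinery does not reach it. You are honest that your argument stalls, and your analysis of the second route is accurate: asking for a single ultrafilter $p\in\D=\overline{\MI}\cap\mathcal{MMI}$ with $E\in p$ is essentially the paper's Conjecture \ref{conj_dc}, and additive plus multiplicative piecewise syndeticity only place $E$ in ultrafilters from the two minimal ideals \emph{separately}, so the hypotheses indeed do not supply such a $p$.

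However, your first route --- reduce to Theorem \ref{cor_integer} by producing a double F\o lner sequence in $\Q$ along which $E$ has positive upper density --- is not merely ``hard to splice''; it is provably impossible, for \emph{every} $E\subset\N$, regardless of any largeness hypotheses. Let $(F_N)$ be any double F\o lner sequence in $\Q$ and fix $k\in\N$. The translates $E+j/k$, $j=0,\dots,k-1$, are pairwise disjoint, since elements of $E\subset\Z$ shifted by $j/k$ have distinct fractional parts. By asymptotic invariance of $(F_N)$ under the additive shifts,
$$\big|F_N\cap(E+j/k)\big|\;\geq\;\big|F_N\cap E\big|-\big|F_N\setminus(F_N-j/k)\big|\;=\;\big|F_N\cap E\big|-o(|F_N|),$$
and summing over $j$ and using disjointness gives $k\,|F_N\cap E|\leq|F_N|+o(|F_N|)$, whence $\bar d_{(F_N)}(E)\leq 1/k$ for every $k$, i.e. $\bar d_{(F_N)}(E)=0$. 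So Corollary \ref{cor_intro4} and Theorem \ref{cor_integer} can never be invoked with a subset of $\N$ playing the role of $E\subset K=\Q$. The same discreteness kills your intermediate target: for $E\subset\N$ and $g:x\mapsto ux+v$ in ${\mathcal A}_\Q$, the set $\theta_g^{-1}E$ is contained in a single coset of the lattice $(1/u)\Z$, and finitely many such cosets cannot cover the dense set $\Q$; hence no subset of $\N$ is affinely syndetic in $\Q$. (Separately, you misquote Theorem \ref{thm_syndensity}: affine syndeticity is equivalent to positive upper density along \emph{every} double F\o lner sequence, not along \emph{some}; only the direction you need survives, but it is moot given the above.)

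This impossibility is exactly why the paper's combinatorial corollaries are stated for dense subsets of a field (partitions of $\Q$, of number fields $K$, etc.) while the $\N$-statements survive only as Conjectures \ref{conj_thick} and \ref{conj_dc}: the affine/double-F\o lner density machinery over $\Q$ is structurally blind to subsets of $\N$. Any genuine attack on Conjecture \ref{conj_thick} has to quantify largeness intrinsically inside $\N$ --- in the spirit of Theorem \ref{thm_naturalthick}, which works directly with additive density and dilation in $\N$ --- rather than by embedding $E$ into the affine framework over $\Q$.
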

While Conjecture \ref{conj_thick} implies that for any partition of $\N$ into two cells, one of the cells contains many configurations $\{x+y,xy\}$, the property of having positive lower density is not stable under partitions.
Indeed it is not hard to construct a partition of $\N$ into two sets, both with $0$ lower density.
However, for any finite partition of a $DC$ set, one of the cells is still a $DC$ set.
Observe that the example $E$ constructed in the proof of the Theorem \ref{thm_thickbad} can be split into two sets $E=E_A\cup E_M$ such that $E_A$ is additively thick, but has density $0$ with respect to any multiplicative F\o lner sequence, and $E_M$ is multiplicatively thick but has density $0$ with respect to any additive F\o lner sequence.
Therefore $E$ is very far from being a $DC$ set.
This observation leads to the following conjecture:
\begin{conjecture}\label{conj_dc}
  Every $DC$ set in $\N$ contains a configuration $\{x+y,xy\}$.
\end{conjecture}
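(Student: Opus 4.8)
The plan is to work inside the ultrafilter framework underlying the definition of $DC$ sets. Fix $p\in\D$ and a $DC$ set $C\in p$; by Proposition \ref{prop_Nismultcentral*} we have $\N\in p$, so we may freely assume $C\subset\N$. Two complementary sources of structure are available. On the one hand, $p$ is a multiplicative minimal idempotent, so $C$ is multiplicatively central and, by Lemma \ref{lemma_centralIP0}, every member of $p$ contains arbitrarily large additive finite-sum sets $FS(Z)$. On the other hand, since $p\in\overline{\MI}$, every member of $p$ (in particular $C$) is \emph{additively central}: there is an additive minimal idempotent $q$ with $C\in q$, and hence $C$ inherits all the additive combinatorial richness of central sets.

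The natural strategy is to reduce the problem to a single nonempty intersection. Writing $C^{\star}=\{n\in\Z^{*}:M_n^{-1}C\in p\}$, the multiplicative idempotency $pp=p$ gives $C^{\star}\in p$, and for every $y\in C^{\star}$ the dilate $M_y^{-1}C=\{x:xy\in C\}$ lies in $p$. Thus, to produce $x,y$ with $\{x+y,xy\}\subset C$ it would suffice to find a single $y\in C^{\star}$ for which the additive shift $C-y=\{x:x+y\in C\}$ meets the $p$-large set $M_y^{-1}C$. In the idealized situation where $p$ were \emph{also} an additive idempotent, one would simply take $y$ in the $p$-large set $C^{\star}\cap\{n:C-n\in p\}$; then $(C-y)\cap M_y^{-1}C\in p$ would be nonempty, and any $x$ in it would yield the desired configuration.

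The main obstacle is precisely that this idealized situation cannot occur: the sets $\MI$ and $\mathcal{MMI}$ are disjoint, so $p$ is a multiplicative idempotent lying only in the \emph{closure} of the additive idempotents, and the additive shifts $C-y$ need not belong to $p$. The additive centrality of $C$ does furnish an additive idempotent $q$ with $C-y\in q$ for $q$-many $y$, but $q\neq p$, and largeness of $M_y^{-1}C$ in $p$ together with largeness of $C-y$ in $q$ does not force a common element. In other words, the approach founders on the absence of a single ultrafilter witnessing both the additive and the multiplicative recurrence simultaneously---exactly the phenomenon that makes $\{x+y,xy\}$ patterns so elusive.

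To attempt to bypass this I would try to substitute the missing additive idempotency by the concrete finite-sum sets provided by Lemma \ref{lemma_centralIP0}: choose $FS(Z)\subset C\cap C^{\star}$, pass to $\bigcap_{s\in FS(Z)}M_s^{-1}C\in p$, and search there (via a second application of Lemma \ref{lemma_centralIP0}) for additive structure realizing, for one and the same pair $(x,y)$, both a sum and a product inside $C$. A complementary line would be to imitate the van der Corput / compact--weakly mixing dichotomy used for Theorem \ref{cor_integer}, but now in the absence of any invariant density---which is the real difficulty, since a general $DC$ set carries no measure-preserving model. I expect that neither line can be pushed through with the present tools: Conjecture \ref{conj_dc} implies the long-standing open problem on monochromatic $\{x+y,xy\}$ in finite partitions of $\N$ (one cell of any finite partition is a $DC$ set), so a complete proof would resolve that problem as well, and the crux remains the genuine incompatibility of additive and multiplicative idempotency.
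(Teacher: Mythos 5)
You should note at the outset that the statement you were asked to prove is not a theorem of the paper at all: it is Conjecture \ref{conj_dc}, stated in Section \ref{sec_misc} precisely because the authors cannot prove it. The paper offers no proof to compare against; it only observes that the conjecture, if true, would imply the long-standing open problem that every finite partition of $\N$ has a cell containing configurations $\{x+y,xy\}$ (since for any finite partition one of the cells is a $DC$ set). Your proposal, by its own admission, does not prove the statement either, so strictly speaking it has a gap --- but the gap is exactly this open problem, not an oversight in your reasoning.

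The partial analysis you give is sound and consistent with the paper's framework. The reduction is correct: for $p\in\D$ and $C\in p$, multiplicative idempotency $pp=p$ gives $C^{\star}=\{n:M_n^{-1}C\in p\}\in p$, and it would suffice to find $y\in C^{\star}$ with $(C-y)\cap M_y^{-1}C\neq\emptyset$; Proposition \ref{prop_Nismultcentral*} does let you work inside $\N$; and the fact that $C$ belongs to some additive minimal idempotent $q$ follows from $p\in\overline{\MI}$ because $\overline{C}$ is a clopen neighborhood of $p$. You also identify the true obstruction accurately: since $\MI$ and $\mathcal{MMI}$ are disjoint for $\Z$ (cf.\ the paper's reference to \cite[Corollary 13.15]{Hindman_Strauss98}), no single ultrafilter can witness both $C-y\in p$ and $M_y^{-1}C\in p$, and largeness of one set in $p$ together with largeness of another in a different ultrafilter $q$ never forces a common element. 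This is precisely why the statement remains a conjecture: the paper's positive results (Theorems \ref{thm_main}, \ref{thm_mcentral} and \ref{cor_integer}) all rely on a Hilbert-space or measure-preserving model arising from a density assumption or an affine action, which a general $DC$ set does not supply. Your concluding assessment --- that the present tools cannot close the argument and that a complete proof would resolve the monochromatic $\{x+y,xy\}$ problem --- is exactly the position taken by the paper itself.
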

Observe that Conjecture \ref{conj_dc} implies that for any finite partition of $\N$, one of the cells contains plenty of configurations $\{x+y,xy\}$.

\bibliography{refs-joel}

\begin{thebibliography}{10}

\bibitem{Bergelson96}
V.~Bergelson.
\newblock Ergodic {R}amsey theory--an update.
\newblock In {\em Ergodic theory of {${\bf Z}^d$} actions}, volume 228 of {\em
  London Math. Soc. Lecture Note Ser.}, pages 1--61. Cambridge Univ. Press,
  Cambridge, 1996.

\bibitem{Bergelson00}
V.~Bergelson.
\newblock The multifarious {P}oincar\'e recurrence theorem.
\newblock In {\em Descriptive set theory and dynamical systems}, volume 277 of
  {\em London Math. Soc. Lecture Note Ser.}, pages 31--57. Cambridge Univ.
  Press, Cambridge, 2000.

\bibitem{Bergelson03}
V.~Bergelson.
\newblock Minimal idempotents and ergodic {R}amsey theory.
\newblock In {\em Topics in Dynamics and Ergodic Theory}, volume 310 of {\em
  London Math Soc. Lecture Note Ser.}, pages 8--39. Cambridge Univ. Press,
  Cambridge, 2003.

\bibitem{Bergelson10}
V.~Bergelson.
\newblock Ultrafilters, {IP} sets, dynamics, and combinatorial number theory.
\newblock In {\em Ultrafilters across mathematics}, volume 530 of {\em Contemp.
  Math.}, pages 23--47. Amer. Math. Soc., Providence, RI, 2010.

\bibitem{Bergelson_Furstenberg_McCutcheon96}
V.~Bergelson, H.~Furstenberg, and R.~McCutcheon.
\newblock I{P}-sets and polynomial recurrence.
\newblock {\em Ergodic Theory Dynam. Systems}, 16(5):963--974, 1996.

\bibitem{Bergelson_Hindman90}
V.~Bergelson and N.~Hindman.
\newblock Nonmetrizable topological dynamics and {R}amsey theory.
\newblock {\em Trans. Amer. Math. Soc.}, 320(1):293--320, 1990.

\bibitem{Bergelson_Hindman94}
V.~Bergelson and N.~Hindman.
\newblock On {${\rm IP}^*$} sets and central sets.
\newblock {\em Combinatorica}, 14(3):269--277, 1994.

\bibitem{Bergelson_Johnson_Moreira15}
V.~Bergelson, J.~Johnson, and J.~Moreira.
\newblock New polynomial and multidimensional extensions of classical partition
  results.
\newblock Submitted, available at http://arxiv.org/abs/1501.02408.

\bibitem{Bergelson_McCutcheon07}
V.~Bergelson and R.~McCutcheon.
\newblock Central sets and a non-commutative {R}oth theorem.
\newblock {\em Amer. J. Math.}, 129(5):1251--1275, 2007.

\bibitem{Bergelson_Moreira15}
V.~Bergelson and J.~Moreira.
\newblock Ergodic theorem involving additive and multiplicative groups of a
  field and $\{x+y,xy\}$ patterns.
\newblock To appear in \emph{Ergodic Theory Dynam. Systems}, available at
  http://arxiv.org/abs/1307.6242.

\bibitem{Bergelson_Robertson14}
V.~Bergelson and D.~Robertson.
\newblock Polynomial recurrence with large intersection over countable fields.
\newblock To appear in \emph{Israel J. Math.}, available at
  http://arxiv.org/abs/1409.6774.

\bibitem{Furstenberg81}
H.~Furstenberg.
\newblock {\em Recurrence in ergodic theory and combinatorial number theory}.
\newblock Princeton University Press, Princeton, N.J., 1981.

\bibitem{Hindman_Strauss98}
N.~Hindman and D.~Strauss.
\newblock {\em Algebra in the {S}tone-\v {C}ech compactification}.
\newblock Walter de Gruyter \& Co., Berlin, 1998.

\bibitem{Schur16}
I.~Schur.
\newblock {\"U}ber die kongruenz $x^m+y^m\equiv z^m(\text{mod } p)$.
\newblock {\em Jahresbericht der Deutschen Math. Verein.}, 25:114--117, 1916.

\end{thebibliography}
\bibliographystyle{plain}
\end{document}